\newtheorem{thm}{Theorem}[section]
\newtheorem{main}{Theorem}
\newtheorem*{thm*}{Theorem}
\newtheorem{lem}[thm]{Lemma}
\newtheorem{prop}[thm]{Proposition}
\newtheorem*{prop*}{Proposition}
\newtheorem{conj}[thm]{Conjecture}
\newtheorem{cor}[thm]{Corollary}
\theoremstyle{definition}
\newtheorem{defn}[thm]{Definition}
\newtheorem{notation}[thm]{Notation}
\newtheorem{remark}[thm]{Remark}
\newtheorem{question}[thm]{Question}
\newtheorem{example}[thm]{Example}
\def\e{\epsilon}
\def\la{\lambda}
\def\La{\Lambda}
\def\al{\alpha}
\def\bb{\mathbb}
\def\vp{\varphi}
\def\de{\delta}
\def\bb{\mathbb}
\def\G{\Gamma}
\def\cc{\mathcal}
\DeclareMathOperator{\id}{id}
\DeclareMathOperator{\CB}{CB}
\DeclareMathOperator{\cp}{{CP}}
\DeclareMathOperator{\cb}{cb}
\DeclareMathOperator{\Span}{span}
\DeclareMathOperator{\tr}{tr}
\DeclareMathOperator{\Ind}{Ind}
\DeclareMathOperator{\Ch}{Ch}
\DeclareMathOperator{\ucp}{UCP}
\DeclareMathOperator{\MIN}{MIN}
\newcommand{\cbnorm}[1]{\left\lVert #1 \right\rVert_{\text{cb}}}
\newcommand{\norm}[1]{\left\lVert #1 \right\rVert}
\newcommand{\bidual}[1]{#1^{**}}
\DeclareMathOperator{\coind}{Co-Ind}
\DeclareMathOperator{\UCP}{UCP}
\newcommand\ip[2]{\left\langle #1\, , #2 \right\rangle}
\DeclareMathOperator{\CP}{\text{CP}}
\begin{document}

\title{An Index for Inclusions of Operator Systems}

\author[Araiza]{Roy Araiza}
\author[Griffin]{Colton Griffin}
\author[Sinclair]{Thomas Sinclair}

\address{Department of Mathematics \& IQUIST, University of Illinois at Urbana-Champaign, Urbana, IL 61801}
\email{raraiza@illinois.edu}
\urladdr{https://math.illinois.edu/directory/profile/raraiza}

\address{Mathematics Department, Purdue University, 150 N. University Street, West Lafayette, IN 47907-2067}
\email{griff254@purdue.edu}
\email{tsincla@purdue.edu}
\urladdr{http://www.math.purdue.edu/~tsincla/}

\subjclass[2020]{46L07; 47C15, 81P47}

\keywords{operator systems, noncommutative entropy theory, quantum information theory}

\setcounter{tocdepth}{1}
\maketitle

\begin{abstract}
    Inspired by a well-known characterization of the index of an inclusion of II$_1$ factors due to Pimsner and Popa, we define an index-type invariant for inclusions of operator systems. We compute examples of this invariant, show that it is multiplicative under minimal tensor products, and explain how it generalizes the Lov\'asz theta invariant to general matricial systems in a manner that is closely related to the quantum Lov\'asz thetha invariant defined by Duan, Severini, and Winter.
\end{abstract}

\section{Introduction}

Jones \cite{Jones1983} defined a fundamental invariant for an inclusion of II$_1$ factors $\cc N\subset \cc M$, known as the \emph{index}, as the von Neumann dimension (coupling constant) of $L^2(\cc M)$ viewed as a left $\cc N$-module. Pimsner and Popa \cite{PimsnerPopa1986} gave a remarkable probabilistic formula for the index in terms of the smallest value $\la$ (if it exists) so that $\la\cdot E_{\cc N} - \id_{\cc M}:\cc M\to \cc M$ is positive, where $E_{\cc N}$ is the unique trace-preserving conditional expectation from $\cc M$ onto $\cc N$. In extending their work to the non-factorial case, it was realized by Baillet, Denizeau, and Havet \cite{Baillet1988} that a stronger condition than finiteness of the probabilistic index was necessary, namely the existence of $\la'$ so that $\la'\cdot E_{\cc N} - \id_{\cc M}$ is completely positive, though, as observed by Popa \cite[Remark 1.1.7]{PopaCBMS}, these notions coincide in the factorial case. We refer the reader to \cite{FrankKirchberg1998} for an account of these developments.

Let $\cc X_0\subset \cc X$ be an inclusion of operator systems. Following Pimsner and Popa, one can seek to determine how ``flat'' a unital, completely positive map $\vp: \cc X\to \cc X_0$ can possibly be. That is, one would like to know the smallest constant $\la$ so that $\la\cdot\vp - \id_{\cc X}: \cc X\to \cc X$ is (completely) positive for some unital, completely positive map $\vp: \cc X\to \cc X_0$. Heuristically at least, this is a reasonable generalization of the probabilistic index, since for an inclusion of $\cc N\subset \cc M$ of II$_1$ factors, the conditional expectation $E_{\cc N}$ commutes with all symmetries (i.e., $\ast$-automorphisms) of $\cc M$ which leave $\cc N$ fixed; hence, one would expect that $E_{\cc N}$ is the most uniform unital, complete positive map from $\cc M$ into $\cc N$ and that $\la$ is minimized for $E_{\cc N}$. As we will be working in the operator system category, it is natural to study the behavior of this invariant under injective tensor products, as they are functorial under inclusions, so the importance of $\la\cdot\vp - \id$ being completely positive becomes manifest. We term this the \emph{CP-index} of the inclusion $\cc X_0\subset \cc X$, which we denote by $\Ind_{\cp}(\cc X: \cc X_0)$. (The reader may go to Section \ref{sec:cp-index} below for the precise definition.)

Our main focus in this paper is the study of the $\cp$-index for inclusions of finite-dimensional operator systems, where it is always well-defined. Despite the generality of the definition, it turns out the $\cp$-index has many nice properties and is computable for many special cases. In the finite-dimensional case, we may reduce calculating the $\cp$-index to a problem in convex programming which allows use to deduce our first main result via strong duality:

\begin{main}[Corollary \ref{cor:multiplicativity-cp-index}]
     Let $\mathcal X_0 \subset \mathcal X$ and $\mathcal Y_0 \subset \mathcal Y$ be inclusions of finite-dimensional operator systems. Then \begin{align*}
     \Ind_{\cp}(\mathcal X \otimes_{\min} \mathcal Y: \mathcal X_0 \otimes_{\min} \mathcal Y_0) = \Ind_{\cp}(\mathcal X: \mathcal X_0) \Ind_{\cp}(\mathcal Y: \mathcal Y_0).
\end{align*}
\end{main}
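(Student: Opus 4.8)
The plan is to prove the equality
\[
\Ind_{\cp}(\mathcal X\otimes_{\min}\mathcal Y:\mathcal X_0\otimes_{\min}\mathcal Y_0)=\lambda\mu,
\]
where I write $\lambda=\Ind_{\cp}(\mathcal X:\mathcal X_0)$ and $\mu=\Ind_{\cp}(\mathcal Y:\mathcal Y_0)$, by establishing the inequalities $\le$ and $\ge$ separately. Throughout I would use the reformulation of the $\cp$-index as a finite-dimensional conic (semidefinite) program together with the fact that, in finite dimensions, this program satisfies strong duality, so that its value agrees with that of the associated dual program.

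\emph{Upper bound by tensoring primal witnesses.} Let $\varphi\colon\mathcal X\to\mathcal X_0$ and $\psi\colon\mathcal Y\to\mathcal Y_0$ be UCP maps attaining the two indices, so that $P:=\lambda\varphi-\id_{\mathcal X}$ and $Q:=\mu\psi-\id_{\mathcal Y}$ are completely positive. Then $\varphi\otimes\psi$ is a UCP map from $\mathcal X\otimes_{\min}\mathcal Y$ into $\mathcal X_0\otimes_{\min}\mathcal Y_0$, and
\begin{align*}
\lambda\mu\,(\varphi\otimes\psi)-\id_{\mathcal X\otimes_{\min}\mathcal Y}
&=(\id_{\mathcal X}+P)\otimes(\id_{\mathcal Y}+Q)-\id_{\mathcal X}\otimes\id_{\mathcal Y}\\
&=P\otimes\id_{\mathcal Y}+\id_{\mathcal X}\otimes Q+P\otimes Q.
\end{align*}
Because $\otimes_{\min}$ is spatial, each summand on the right is a minimal tensor product of completely positive maps and hence completely positive, so their sum is completely positive. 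Thus $\varphi\otimes\psi$ witnesses the bound $\le\lambda\mu$.

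\emph{Lower bound by tensoring dual witnesses.} For the reverse inequality I would pass to the dual program. By strong duality the values $\lambda$ and $\mu$ are attained by dual-optimal certificates for the two inclusions, and I would form their tensor product and check that it is dual-feasible for the inclusion $\mathcal X_0\otimes_{\min}\mathcal Y_0\subset\mathcal X\otimes_{\min}\mathcal Y$, with dual objective $\lambda\mu$. The normalization constraints multiply immediately under tensoring; the essential point is the positivity constraint, which for the minimal (spatial) tensor product is detected by product states, so that a tensor product of positive dual certificates remains a valid positive certificate. Dual feasibility of the product certificate gives $\ge\lambda\mu$, and combining the two inequalities proves the identity.

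\emph{Main obstacle.} The upper bound is routine, using only the tensor behaviour of completely positive maps. The crux is the lower bound, which requires having the dual program explicitly in hand — that is, the strong-duality statement coming from the convex-programming reformulation — and then verifying that the tensor product of dual-optimal certificates satisfies the product program's positivity constraint. This is precisely where spatiality of $\otimes_{\min}$ is used: positivity for the minimal tensor product is tested against product states, so positive dual data tensorize, whereas for a coarser operator-system tensor product this mechanism, and possibly multiplicativity itself, need not persist. A secondary point is the constraint qualification (Slater's condition) underlying strong duality, which should hold here by finite-dimensionality together with feasibility of the index program.
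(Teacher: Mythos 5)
Your overall strategy coincides with the paper's: submultiplicativity by tensoring primal witnesses, and supermultiplicativity by passing to a finite-dimensional conic dual, invoking strong duality (with Slater's condition supplied by an interior feasible point), and tensoring dual certificates. The upper bound is correct and complete; the paper uses the two-term decomposition $u\otimes v-\id=u\otimes(v-\id)+(u-\id)\otimes\id$ rather than your three-term one, but both work, and attainment of the infima is not actually needed for that direction.

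The lower bound, however, is left as a plan, and the step you describe as ``the normalization constraints multiply immediately under tensoring'' is precisely where the argument does not go through as stated. Dualizing the natural primal, the constraint $\varphi-\id\in\CP(\cc X)$ introduces an inhomogeneous term: after the substitution $\psi=\varphi-\id$ the primal objective is $\|\varphi(1)\|-1$, so the dual optimum is $\Ind_{\cp}(\cc X:\cc X_0)-1$ rather than the index itself, and the dual positivity constraint has the affine form $f+1_X\otimes\de_0\in\CP_1(\cc X)^\circ$ rather than $f\in\CP_1(\cc X)^\circ$. Consequently the plain tensor product $f\otimes g$ of dual-optimal certificates has objective value $(\la-1)(\mu-1)$, which is in general strictly smaller than the required $\la\mu-1$, and it need not satisfy the shifted constraint. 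The paper's remedy is to tensor the \emph{shifted} certificates: it takes
\[
h=f\otimes g+(1_X\otimes\de_0^X)\otimes g+f\otimes(1_Y\otimes\de_0^Y),
\]
so that $h+(1_X\otimes 1_Y)\otimes(\de_0^X\otimes\de_0^Y)=(f+1_X\otimes\de_0^X)\otimes(g+1_Y\otimes\de_0^Y)$ and $\ip{\id}{h}=\la\mu-1$. You would need this correction (or an equivalent homogenization of the program) to turn your sketch into a proof. A secondary point: the positivity of the product certificate rests on the containment $\CP_1(\cc X)^\circ\otimes\CP_1(\cc Y)^\circ\subset\CP_1(\cc X\otimes_{\min}\cc Y)^\circ$, and since $\CP_1(\cc X\otimes_{\min}\cc Y)$ is not generated by product maps, ``positivity is detected by product states'' is a heuristic rather than a verification of this inclusion.
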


This is an interesting property for several reasons. Among them, for a finite-dimensional system $\cc X$, this allows us to interpret $\tilde{\la}(\cc X) := \Ind_{\cp}(\cc X: \bb C1)$ as a ``noncommutative dimension'' for $\cc X$. Following Pimsner and Popa \cite{PimsnerPopa1986} and recent work of Gao, Junge, and LaRacuente \cite{Li2020} we may interpret $\log(\Ind_{\cp}(\cc X: \cc X_0))$ as a relative entropy theory, by which the above result establishes additivity under taking products. In very recent work relating relative von Neumann entropy of an inclusion of II$_\infty$ factors to the logarithm of the Jones index, Longo and Witten \cite[Section 7]{LongoWitten2022}, motivated by physical considerations, propose a notion of an index relative to a subspace rather than a subalgebra. It is hoped that the notion of index proposed here may likewise find applications to quantum field theory.

While our investigations started from the premise of generalizing the Pimsner--Popa index to operator systems, in the matricial case there are surprising connections between this notion of index and the quantum Lov\'asz theta invariant $\widetilde\vartheta$ defined by Duan, Severini, and Winter \cite{Winter2013}. In Section \ref{sec:examples} below, we give a semidefinite programming characterization of the $\cp$-index of a matricial system $\cc S\subset M_n$ from which we obtain the following result which shows that the $\cp$-index, like the quantum Lov\'asz theta invariant, is an extension of the Lov\'asz theta invariant to matricial systems.
\begin{main}[Proposition \ref{prop:cp-Lov\'asz-equal}]
    Let $\G = (V,E)$ be a graph on $n$-vertices. For its associated matricial system $\cc S_\G\subset M_n$ we have that 
    \[\Ind_{\cp}(M_n:\cc S_\G) = \vartheta(\G)\ \textup{and}\ \tilde\la(\bb CI_n + \cc S_\G^\perp) = \vartheta(\overline\G).\]
    Here $\overline\G$ denotes the graph complement of $\G$.
\end{main}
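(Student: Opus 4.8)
The plan is to prove both equalities by reducing each $\cp$-index to a semidefinite program and matching it, via strong duality, with a standard SDP for the Lov\'asz number. Throughout I use the SDP characterization of the $\cp$-index of a matricial system from Section~\ref{sec:examples}: a unital completely positive $\vp \colon M_n \to \cc S$ is encoded by its Choi matrix $C = \sum_{ij} E_{ij} \otimes \vp(E_{ij}) \in M_n \otimes M_n$, and $\Ind_{\cp}(M_n \colon \cc S)$ is the optimal value of
\[ \min\{\la : C \succeq 0,\ \la C \succeq \ket{\Om}\bra{\Om},\ \tr_1(C) = I_n,\ (\id \otimes \Pi_{\cc S^\perp})(C) = 0\}, \]
where $\ket{\Om} = \sum_i \ket{ii}$, the condition $\la C \succeq \ket{\Om}\bra{\Om}$ encodes that $\la\vp - \id$ is completely positive, $\tr_1(C) = I_n$ encodes unitality of $\vp$, and $\Pi_{\cc S^\perp}$ is the projection of the second tensor leg onto $\cc S^\perp$ (so the last equation is the range constraint $\vp(E_{ij}) \in \cc S$). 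I will match this against the Lov\'asz SDP $\vartheta(\G) = \max\{\langle J, B\rangle : B \succeq 0,\ \tr B = 1,\ B_{ij} = 0 \text{ for } ij \in E\}$ and its dual.

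For the first equality the key structural step is a symmetry reduction. The program is invariant under the diagonal torus acting by $C \mapsto (\bar U \otimes U)\,C\,(\bar U \otimes U)^*$ for diagonal unitaries $U$ (these preserve $M_n$, $\id$, and $\cc S_\G$), so by averaging an optimizer may be taken torus-invariant. An invariant $C$ is forced to split as a matrix $Z$ supported on $\operatorname{span}\{\ket{aa}\}$, with $Z_{ab} = \bra{aa}C\ket{bb}$, plus a nonnegative diagonal remainder; pushing this through the four constraints collapses everything to $Z \succeq 0$, $\ Z \in \cc S_\G$, $\ Z_{cc} \le 1$, and $\la Z \succeq J$, using that $\ket{\Om}\bra{\Om}$ restricts to the all-ones matrix $J$ on $\operatorname{span}\{\ket{aa}\}$. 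I would then recognize this reduced program as a standard form of $\vartheta(\G)$ (dualizing it to the $\max\langle J,B\rangle$ form; sanity checks: $\G = K_n$ gives $Z = J$, $\la = 1$, and the edgeless graph gives $Z = I_n$, $\la = n$) and invoke strong duality. Slater's condition is clear, since the depolarizing map $\vp_0 = \tfrac1n\tr(\cdot)I_n$ lands in $\bb C I_n \subseteq \cc S_\G$ and is strictly feasible for large $\la$; linearizing the bilinear constraint $\la C \succeq \ket{\Om}\bra{\Om}$ by the substitution $D = \la C$ makes the duality rigorous. This yields $\Ind_{\cp}(M_n \colon \cc S_\G) = \vartheta(\G)$.

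For the second equality, note $\tilde\la(\bb CI_n + \cc S_\G^\perp) = \Ind_{\cp}(\cc T \colon \bb C 1)$ with $\cc T := \bb CI_n + \cc S_\G^\perp$, where a UCP map into $\bb C 1$ is a state $\om$ and the map to be made completely positive is $\Psi = \la\,\om(\cdot)I_n - \id$ on $\cc T$. Since $\cc T$ is not a full matrix algebra, I would invoke Arveson's extension theorem: $\Psi \colon \cc T \to M_n$ is CP iff it extends to a CP map $\tilde\Psi \colon M_n \to M_n$, reintroducing a Choi matrix $C \succeq 0$ together with free parameters on the complement of $\cc T$, subject to $\tilde\Psi$ agreeing with $\Psi$ on $\cc T$ and $\tr_1(C) = \Psi(I_n) = (\la-1)I_n$. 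The identical torus-averaging reduction again produces an SDP in the single variable $Z$, but now the off-diagonal entries of $Z$ are constrained precisely on the support of $\cc S_\G^\perp$ — that is, on the edge set of the complement $\overline\G$. Matching the resulting program with the Lov\'asz SDP attached to $\overline\G$ and applying strong duality as above would give $\tilde\la(\bb CI_n + \cc S_\G^\perp) = \vartheta(\overline\G)$.

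The main obstacle I anticipate is the symmetry-reduction step: verifying that an optimizer may be taken torus-invariant and that invariance forces exactly the decomposition $C = Z \oplus (\text{diagonal})$ that collapses the matrix-valued SDP over $M_n \otimes M_n$ to the scalar Lov\'asz SDP over $M_n$ — this is the precise mechanism by which the ``quantum'' index sees only the classical graph. A secondary point is confirming Slater's condition so that the primal (minimization) and dual (maximization) values coincide. For the second equality there is an additional delicate bookkeeping issue: one must track the Arveson extension so that the free Choi parameters do not spuriously lower the optimum, and one must verify carefully that the constraint pattern carried by $\cc S_\G^\perp$ records $\overline\G$ rather than $\G$, since this is exactly where the graph complement enters.
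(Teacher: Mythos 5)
Your argument for the first identity is correct in outline, and at bottom it is the same mechanism as the paper's, dressed in SDP language: averaging the Choi matrix over conjugation by diagonal unitaries is exactly the passage from $\phi$ to the matrix $A_{ij}=\phi(E_{ij})_{ij}$ in Lemma \ref{lem:choi-expectation}, and lifting a feasible $Z$ of your reduced program back to a feasible map is exactly the Schur-multiplier construction in the paper's proof. Once the reduced program is in the form $\min\{\max_i A_{ii}: A\succeq J_n,\ A\in\cc S_\G\}$ no further duality is needed, since that is the identity (\ref{eq:Lov\'asz-theta}) (i.e.\ \cite[Theorem 3.6.1]{GartnerMatousek2012}), which both you and the paper take as known. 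So the paper replaces your ``set up the SDP, symmetrize an optimizer, invoke Slater'' pipeline by two short explicit feasibility arguments; your route works but carries extra verification burden (attainment of the optimum before averaging, invariance of every constraint under the torus, and the exact lifting of $(Z,\la)$ back to $(C,\la)$).

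The genuine problem is in the second identity, and your own sanity-check method exposes it: for $\G=K_n$ we have $\cc S_\G^\perp=\{0\}$, so $\bb CI_n+\cc S_\G^\perp=\bb CI_n$ and $\tilde\la(\bb CI_n)=1=\vartheta(K_n)$, whereas $\vartheta(\overline{K_n})=n$. Your reduction in fact points at the right answer and you then complement it away: the range constraint forces $Z_{ij}=0$ exactly on the support of $\cc S_\G^\perp$, i.e.\ on the \emph{non-edges} of $\G$, and ``$Z$ vanishes on the non-edges of $\G$'' is precisely the constraint $Z\in\cc S_\G$ appearing in (\ref{eq:Lov\'asz-theta}) for $\G$, not for $\overline\G$. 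Carried out correctly, your computation yields $\tilde\la(\bb CI_n+\cc S_\G^\perp)=\vartheta(\G)$, which is exactly what the paper's proof of Proposition \ref{prop:cp-Lov\'asz-equal} establishes (it sets $\cc S=\cc E_{\overline\G}=\bb CI_n+\cc S_\G^\perp$ and proves $\tilde\la(\cc S)=\vartheta(\G)$; the body statement $\tilde\la(\cc E_\G)=\vartheta(\overline\G)$ is this with $\G$ and $\overline\G$ exchanged, via (\ref{eq:E-S-complememt})). The $\vartheta(\overline\G)$ in the displayed theorem is inconsistent with the body, and your sketch reproduces the slip rather than catching it. Two further points on this half: the Arveson extension is the right move (the paper uses it too), but it only serves the lower bound $\vartheta(\G)\leq\tilde\la$; for the upper bound the symmetrization does not close the argument by itself, because $\cc T=\bb CI_n+\cc S_\G^\perp$ is a proper subsystem and you must exhibit an explicit feasible map. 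The paper does this by taking $A\in\cc E_\G$ with $A\succeq J_n$ and defining $\phi(E_{ii})=A_{ii}I_n$ and $\phi(E_{ij})=A_{ij}E_{ij}$ for $i\neq j$; your sketch does not supply this direction.
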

We leave it as an open question whether $\Ind_{\cp}(M_n:\cc S) = \widetilde\vartheta(\cc S)$ for general matricial systems. In that section we also compute other examples of the index for canonical inclusions of operator systems which further explore the analogy between the index and the Lov\'asz theta invariant \cite{Lovasz1979} and its quantum variants \cite{Boreland2021, Winter2013, Ganesan, PaulsenOrtiz2015}. These investigations are rather preliminary, but they should hopefully convince the reader that many deeper connections could be found in this direction.

In the final section, we define and investigate a couple of other index-type invariants. Based on the duality of finite-dimensional operator systems, we define a ``co-index'' for a pairing of an operator system $\cc X$ and a kernel $\cc J\subset \cc X$. The co-index seems related to work of Ortiz and Paulsen \cite{PaulsenOrtiz2015} on the extension of the (quantum) Lov\'asz theta invariant to operator systems, but is possibly distinct from it. There is again a relation between Lov\'asz theta invariant and the co-index in the matricial case, though we cannot determine whether they coincide. We conclude the paper by pointing out that the index could be generalized even further to inclusions of operator spaces or even Banach spaces. We leave it as the subject of future work whether more connections lie with Banach space geometry.

\section{Preliminaries}
We begin by reviewing the relevant background in operator space and operator system theory. Throughout the manuscript, $M_n$ will denote the algebra of complex $n \times n$ matrices. Given a Hilbert space $H$, a \emph{concrete operator space} is simply a closed linear subspace $\cc X \subset B(H)$. Of particular interest is the abstract formulation of operator spaces which was first studied in \cite{Ruan1978}. 

Consider a linear (vector) space $\cc X$. A norm $\al_n: M_n(\cc X) \to [0,\infty)$ on the linear space $M_n(\cc X)$ will be called a \emph{matrix norm}. 
\begin{defn}
    An \emph{abstract operator space} is defined to be the pair $(\cc X, \al)$ where $\cc X$ is a linear space, and $\al:= \{\al_n\}_{n \in \bb N}$ is a collection of matrix norms such that $(\cc X, \al_1)$ is a Banach space, and such that $\al$ satisfies the following two axioms: \begin{enumerate}
        \item For each $x \in M_n(\cc X), y \in M_m(\cc X)$ it follows $\al_{m+n}(x \oplus y) = \max \{\al_n(x), \al_m(y)\}$.
        \item For each $x \in M_n(\cc X)$ and scalar matrices $a,b \in M_n$ one has $\al_n(axb) \leq \norm{a} \al_n(x) \norm{b},$ where $\norm{\cdot}$ denotes the operator norm on $M_n.$ 
    \end{enumerate}
\end{defn} Given a linear space $\cc X$, then the sequence of matrix norms $\al:= \{\al_n\}_{n \in \bb N}, \al_n: M_n(\cc X) \to [0,\infty),$ is called an \emph{operator space norm} if the pair $(\cc X, \al)$ is an abstract operator space.

Given a linear map $u: \cc X \to \cc Y$ between two operator spaces $\cc X$ and $\cc Y$ then we define the \emph{n\textsuperscript{th}-amplification} $u_n: M_n(\cc X) \to M_n(\cc Y)$ as the map defined by \begin{align*}
    u_n(x) = \sum_{ij} e_{ij} \otimes u(x_{ij}),
\end{align*} where $x \in M_n(\cc X),$ and $e_{ij} \in M_n$ denotes the standard matrix units. We define the \emph{completely bounded (cb)} norm of a linear map $u: \cc X \to \cc Y$ as $\cbnorm{u: \cc X \to \cc Y}:= \sup_n \al_n(u_n).$ The map $u: \cc X \to \cc Y$ is \emph{completely bounded} if $\cbnorm{u} < \infty,$ a \emph{complete isometry} if $u_n$ is an isometry for each $n \in \bb N$, and a \emph{complete isomorphism} if it is a linear isomorphism such that $\cbnorm{u}, \cbnorm{u^{-1}} < \infty.$

Let $\cc X$ and $\cc Y$ be operator spaces and consider a linear map $u: \cc X \to \cc Y$. If $\cc N(u)$ denotes the null space of $u$, then consider the linear space $\cc X/ \cc N(u)$. We may equip this quotient space with an operator space structure in the following way: let $\pi: \cc X \to \cc X/ \cc N(u)$ denote the natural quotient map. Then given $\hat{x} \in M_n(\cc X/ \cc N(u))$, we define the corresponding norm as \begin{align*}
    \norm{\hat{x}}_{M_n(\cc X/ \cc N(u))}:= \inf \{ \norm{x}: x \in M_n(\cc X), \pi(x) = \hat{x}\}.
\end{align*} In particular, we define the matrix norm on $M_n(\cc X/ \cc N(u))$ by making the isometric identification $M_n(\cc X/ \cc N(u)) \simeq M_n(\cc X)/ M_n(\cc N(u)).$ The linear space $\cc X/ \cc N(u)$ with this structure is an operator space which we call the \emph{quotient operator space structure} on $\cc X/ \cc N(u)$. Of course, if $\cc J \subset \cc X$ is a closed subspace of the operator space $\cc X$, then one may consider such an operator space structure on $\cc X/ \cc J.$ Though, we will only be concerned with quotients of operator spaces by null spaces of particular maps. Thus, whenever we refer to quotients of operator spaces they will always be assumed to have this structure.
If $u: \cc X \to \cc Y$ is a linear map between operator spaces, then we say $u$ is a \emph{complete quotient map} if $\tilde{u}: \cc X/ \cc N(u) \to \cc Y$ is a complete isometry. We identify two operator spaces $\cc X $ and $\cc Y$ if there exists a linear map $u: \cc X \to \cc Y$ such that $u$ is a completely isometric isomorphism. If such a map exists between $\cc X$ and $\cc Y$ then we will denote this by $\cc X \simeq \cc Y.$ 

Given two operator spaces $\cc X$ and $\cc Y$, then the linear space of all completely bounded maps from $\cc X$ to $\cc Y$ is denoted $\CB(\cc X,\cc Y)$. If $\cc X = \cc Y$ then we denote the linear space as $\CB(\cc X)$. The space $\CB(\cc X, \cc Y)$ is complete with respect to the cb-norm and by making the identification $M_n(\CB(\cc X, \cc Y)) \simeq \CB(\cc X, M_n(\cc Y))$ then the linear space of completely bounded maps between operator spaces becomes an operator space. In Ruan's seminal work he proved that there is a one-to-one correspondence between abstract operator spaces and concrete operator spaces. We refer the interested reader to \cite{Ruan1978} for details. 

When considering operator spaces we will at times be interested in the \emph{dual operator space structure}. In particular, suppose $\cc X$ is an operator space and let $\cc X^*$ denote the Banach dual of $\cc X$. Then we may equip $\cc X^*$ with an operator space structure in the following way: given any $u \in M_n(\cc X^*)$ then we may identify $u$ with the map $U: \cc X \to M_n$ defined as $U(x):= \sum_{ij} e_{ij} \otimes u_{ij}(x)$, where $u = [u_{ij}]_{ij}.$ In particular, we define a matrix norm on $M_n(\cc X^*)$ via the identification $M_n(\cc X^*) \simeq \CB(\cc X, M_n).$ $\cc X^*$ with this structure will be called the \emph{operator space dual} of $\cc X$. 

Let $\cc X$ and $\cc Y$ be two operator spaces and let $\cc X \otimes \cc Y$ denote their algebraic tensor product. Analogous with the Banach space setting, one may consider various operator space structures on the linear space $\cc X \otimes \cc Y$, and the functoriality of various classes of linear maps with respect to said structures. Following \cite{blecher1991tensor} we have the following: 

\begin{defn}
    Let $\cc X$ and $\cc Y$ be two operator spaces. Then an \emph{operator space tensor product} of $\cc X$ and $\cc Y$ is an operator space structure $\al$ such that $(\cc X \otimes \cc Y, \al)$ is an operator space, which we denote as $\cc X \otimes_\al \cc Y$, and such that the following two conditions hold: \begin{enumerate}
        \item Given $x \in M_m(\cc X)$ and $y \in M_n(\cc Y)$ then \[\norm{x \otimes y}_{M_{mn}(\cc X \otimes_\al \cc Y)} \leq \norm{x}_{M_n(\cc X)} \norm{y}_{M_n(\cc Y)}.\]  
        \item Given $u: \cc X \to M_m$ and $v: \cc Y \to M_n$, both completely bounded, then $u \otimes v \in \CB(\cc X \otimes_\al \cc Y, M_{mn})$ and \begin{align*}
            \cbnorm{u \otimes v: \cc X \otimes_\al \cc Y \to M_{mn}} \leq \cbnorm{u} \cbnorm{v}.
        \end{align*}
    \end{enumerate}
\end{defn}

\begin{defn}
    We say that $\alpha$ is a \emph{reasonable operator space tensor norm} if for all operator spaces $\cc X, \cc Y, \cc S, \cc T$ and completely bounded maps $u: \cc X\to \cc S$ and $v: \cc Y\to \cc T$ we have that
    \[\cbnorm{u\otimes v: \cc X\otimes_\alpha \cc Y\to \cc S\otimes_\alpha\cc T}= \cbnorm{u}\cbnorm{v}.\]
\end{defn}

We now consider the relevant background for operator systems. If $H$ is a Hilbert space, then a \emph{concrete operator system} is a linear subspace $\cc X$ of $B(H)$, such that $\cc X$ contains the identity operator and is closed under the adjoint operation. Similar to the operator space case, of particular interest is the abstract formulation of operator systems which was developed in \cite{Choi1977}. Let $\cc X$ be a linear space with an involution $*: \cc X \to \cc X$. We will typically call such a space a \emph{$*$-vector space}. The (real) subspace of hermitian elements will be denoted $\cc X_h$. As operator spaces were completely determined by their matricial norm structure, operator systems are completely determined by their matricial order structure. We consider the pair $(\cc X, \cc X^+)$ where $\cc X$ is a $*$-vector space, and $\cc X^+ \subset \cc X_h$ is a \emph{(positive) cone} in $\cc X$, which is to say $\cc X^+$ satisfies the following properties: (1) $\cc X^+ + \cc X^+ \subset \cc X^+$; (2) $r \cc X^+ \subset \cc X^+$ for all $r \geq 0$; (3) $\cc X^+ \cap -\cc X^+ = \{0\}.$ Such a cone induces a partial ordering on $\cc X_h$ by declaring $x \leq y$ if and only if $y - x \in \cc X^+$. We call the pair $(\cc X, \cc X^+)$ an \emph{ordered $*$-vector space}. An \emph{Archimedean order unit} of $\cc X$ is defined to be an element $1 \in \cc X_h$ satisfying the following two properties: (1) for all $x \in \cc X_h$ there exists $r>0$ such that $r1 - x \in \cc X^+$; (2) if $\e 1 + x \in \cc X^+$ for all $\e >0$ then $x \in \cc X^+$. The existence of such an element $1$ ensures that the positive cone majorizes the hermitian subspace $\cc X_h$, and furthermore that the cone of positive elements of $\cc X$ is ``order'' closed. The triple $(\cc X, \cc X^+, 1)$ consisting of a $*$-vector space $\cc X$, a positive cone $\cc X^+$, and an Archimedean order unit $1$ will be called an \emph{Archimedean order unit (AOU) space}. We will typically denote an AOU space as $\cc X$ when no confusion will arise. 

A \emph{matrix ordering} on an ordered $*$-vector space $\cc X$ is a collection $\cc C:= \{\cc C_n\}_{n \in \bb N}$ satisfying the following conditions: \begin{enumerate}
    \item $\cc C_n \subset M_n(\cc X)_h$ is a cone for all $n \in \mathbb N$. 
    \item $\cc C_n \cap -\cc C_n = \{0\}$ for each $n \in \bb N$. 
    \item $a^* \cc C_n a \subset \cc C_m$ for all $a \in M_{n,m}, n,m \in \bb N$. 
\end{enumerate} If $\cc X$ is a $*$-vector space and $\cc C$ is a matrix ordering on $\cc X$, then we will call the pair $(\cc X, \cc C)$ a \emph{matrix ordered $*$-vector space}. Note that a $*$-vector space $\cc X$ together with a matrix ordering $\cc C$ necessarily yields an ordered $*$-vector space $(M_n(\cc X), \cc C_n)$ for each $n \in \bb N$, with the involution on $M_n(\cc X)$ defined in the natural way. We may at times alternate between the notation $\cc C_n$ and $M_n(\cc X)^+$ when discussing a matricial cone of a matrix ordered $*$-vector space $\cc X$. An element $1 \in \cc X_h$ is an \emph{Archimedean matrix order unit} if for each $n \in \mathbb N,$ $I_n \otimes 1$ is an Archimedean order unit for the ordered $*$-vector space $(M_n(\cc X), \cc C_n)$. 

\begin{defn}
    An \emph{abstract operator system} is a triple $(\cc X, \cc C, 1)$ where $(\cc X, \cc C)$ is a matrix ordered $*$-vector space, and $1$ is an Archimedean matrix order unit for $(\cc X, \cc C)$.
\end{defn} When no confusion will arise we will denote an abstract operator system as $\cc X$. Let $(\cc X, \cc C, 1_X)$ and $(\cc Y, \cc D, 1_Y)$ be two operator systems and let $u: \cc X \to \cc Y$ be a linear map. We say $u$ is \emph{positive} if $u(\cc X^+) \subset \cc Y^+$, \emph{completely positive} if $u_n(\cc C_n) \subset \cc D_n$ for each $n \in \mathbb N$, and \emph{unital} if $u(1_X) = 1_Y.$ If $u: \cc X \to \cc Y$ is a linear isomorphism such that both $u$ and $u^{-1}$ are completely positive, then we will say $u$ is a \emph{complete order isomorphism}. If there exists a complete order isomorphism between two operator systems, then we will say they are \emph{completely order isomorphic}. 

\begin{notation} The set of all completely positive maps between operator systems $\cc X$ and $\cc Y$ will be denoted $\CP(\cc X, \cc Y)$, and the set of all unital completely positive maps between $\cc X$ and $\cc Y$ will be denoted $\UCP(\cc X, \cc Y)$. If $\cc X = \cc Y$ then the respective completely positive and unital completely positive maps from $\cc X$ to itself will be denoted $\CP(\cc X)$ and $\UCP(\cc X)$. The set of all completely positive maps from an operator system $\cc X$ to itself satisfying $u(\bb C 1_X) = \bb C 1_X$ will be denoted $\CP_1(\cc X).$
\end{notation}

As Ruan did for operator spaces, a seminal result of Choi and Effros says that there is a one-to-one correspondence between abstract operator systems and concrete operator systems. In particular, given any abstract operator system $\cc X$ then there exists a Hilbert space $H$, a concrete operator system $\tilde{\cc X} \subset B(H)$, and a unital complete order isomorphism $u: \cc X \to \tilde{\cc X}.$ We refer the interested reader to \cite{Choi1977} for the details. 

Let $\cc X$ be an operator system. Then a \emph{kernel} in $\cc X$ is defined to be a non-unital $*$-closed subspace $\cc J \subset \cc X$ such that there exists an operator system $\cc Y$ and a (nonzero) completely positive map $u: \cc X \to \cc Y$ for which $\cc J = \cc N(u).$ Consider the $*$-vector space $\cc X/ \cc J$ with the involution defined in the natural way. If $\pi: \cc X \to \cc X/ \cc J$ denotes the natural quotient mapping then we define an operator system structure on the quotient space in the following way: consider the collection $\cc R:= \{\cc R_n\}_{n \in \bb N}$ where \begin{align*}
    \cc R_n:= \{&[x_{ij} + \cc J]_{ij} \in M_n(\cc X/\cc J): \forall \e >0 \,\, \exists y \in M_n(\cc J) \,\,\,\text{such that} \\
    &\e I_n \otimes 1 + (x+y) \in M_n(\cc X)^+\}.
\end{align*} One may then show that the collection $\cc R$ is a matrix ordering on $\cc X/ \cc J$ and $1 + \cc J$ is an Archimedean matrix order unit. The \emph{quotient operator system} $\cc X/ \cc J$ is defined to be the triple $(\cc X/ \cc J, \cc R, 1 + \cc J).$

Some remarks regarding duality in operator system theory are in order. Given an arbitrary operator system $\cc X$ then it is not true that its Banach dual $\cc X^*$ is an operator system in general. Though, the resulting dual space is always a matrix ordered $*$-vector space, and in particular, is an operator space. If one restricts to finite-dimensional operator systems, then it follows the Banach dual is always an operator system. We refer the reader to \cite[Corollary 4.5]{Choi1977} for the proof of this latter fact.

We now review the relevant tensor theory for operator systems which was developed in \cite{KPTT2011}. Let $(\cc X, \cc C, 1_X)$ and $(\cc Y, \cc D, 1_Y)$ be two operator systems. An \emph{operator system structure} on $\cc X \otimes \cc Y$ is a collection of cones $\al := \{\cc R_n\}_{n \in \bb N}, \cc R_n \subset M_n(\cc X \otimes \cc Y)_h$ satisfying the following properties: \begin{enumerate}
    \item $(\cc X \otimes \cc Y, \al, 1_X \otimes 1_Y)$ is an operator system, which we denote as $\cc X \otimes_\al \cc Y$. 
    \item $\cc C \otimes \cc D \subset \cc R$. This is to say $\cc C_n \otimes \cc D_m \subset \cc R_{mn}$, for $m,n \in \bb N.$
    \item If $u: \cc X \to M_n$ and $v: \cc Y \to M_m$ are two unital completely positive maps then $u \otimes v: \cc X \otimes_\al \cc Y \to M_{mn}$ is unital completely positive. 
\end{enumerate}

\begin{defn}
    An \emph{operator system tensor product} is a mapping $\al: \text{OpSys} \times \text{OpSys} \to \text{OpSys}$ such that given any two operator systems $\cc X, \cc Y$, then $\al(\cc X, \cc Y)$ is an operator system structure on $\cc X \otimes \cc Y$. The resulting operator system will be denoted $\cc X \otimes_{\al} \cc Y$. 
\end{defn}

An operator system tensor product is called \emph{functorial} if given any operator systems $\cc X, \cc Y, \cc S, \cc T$ and $u \in \UCP(\cc X, \cc S), v \in \UCP(\cc Y, \cc T)$, then $u \otimes v \in \UCP(\cc X \otimes_\al \cc Y, \cc S \otimes_\al \cc T).$

\section{The CP-index}\label{sec:cp-index}
For $\cc X_0\subset \cc X$ an inclusion of operator systems, we define the \emph{index} of $\cc X_0$ in $\cc X$ to be the value
    \begin{equation*}
        \Ind(\cc X : \cc X_0) := \inf\{\|\phi(1)\| : \phi\in \cp_1(\cc X),\ \phi(\cc X)\subset \cc X_0,\ \phi\succ\succ\id\}
    \end{equation*}
if it exists and $\Ind(\cc X: \cc X_0)=\infty$ otherwise. Here, we use $\phi\succ\succ \psi$ to denote that the map $\phi-\psi$ is positive, though not necessarily completely positive.

\begin{defn}
Given an inclusion of operator systems $\cc X_0 \subset \cc X$ we define the \emph{CP-index} to be  \begin{align} 
\Ind_\text{CP}(\cc X, \cc X_0):= \inf \{ \norm{\vp(1)}: \vp \in \CP_1(\cc X), \vp(\cc X) \subset \cc X_0, \vp - \id \in \CP(\cc X)\}
\end{align}
if it exists and $\Ind_{\cp}(\cc X: \cc X_0)=\infty$ otherwise.
\end{defn}

In order to help with notation we define the following sets: given an inclusion of operator systems $\mathcal X_0 \subset \mathcal X$ we define \begin{equation}
    \Lambda(\cc X, \cc X_0) := \{ \vp \in \text{CP}_1(\mathcal X): \vp(\mathcal X) \subset \mathcal X_0,\ \vp - \id \in \CP(\cc X)\}. 
\end{equation} Thus, the $\cp$-index becomes 
\begin{align*}
    \Ind_{\text{CP}}(\mathcal X, \cc X_0):= \inf \{\norm{\vp(1)} : \vp \in \Lambda(\cc X, \cc X_0)\}.
\end{align*} 

\begin{remark}
    Note that if $\cc X$ is finite-dimensional, then $\La(\cc X, \cc X_0)$ is non-empty. This can be inferred from Lemma \ref{lem:feasible-interior} below.
\end{remark}

\begin{prop}
    Suppose that $\cc X$ is a finite-dimensional operator system. For any proper subsystem $\cc X_0$ we have that $\Ind_{\cp}(\cc X: \cc X_0)> 1$. 
\end{prop}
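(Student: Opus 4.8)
The plan is to separate the argument into a pointwise estimate and a compactness step, with finite-dimensionality entering only in the latter.

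\emph{Pointwise estimate.} Let $\vp \in \La(\cc X, \cc X_0)$. Since $\vp \in \CP_1(\cc X)$ we have $\vp(1) = \la 1$ with $\la = \norm{\vp(1)} \geq 0$. The constraint $\vp - \id \in \CP(\cc X)$ implies in particular that $\psi := \vp - \id$ is positive, so applying $\psi$ to the positive element $1$ gives $\psi(1) = (\la - 1)1 \in \cc X^+$, forcing $\la \geq 1$. The key point is to exclude $\la = 1$. In that case $\psi(1) = 0$, and I would exploit that $1$ is an Archimedean order unit: for any hermitian $x$ pick $r > 0$ with $r 1 \pm x \in \cc X^+$, and apply the positive map $\psi$ to conclude $\pm \psi(x) \in \cc X^+$, i.e. $\psi(x) \in \cc X^+ \cap (-\cc X^+) = \{0\}$. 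Since $\cc X = \cc X_h + i\cc X_h$, this yields $\psi = 0$, that is $\vp = \id$; but then $\cc X = \vp(\cc X) \subseteq \cc X_0$, contradicting the properness of the inclusion. Hence $\norm{\vp(1)} > 1$ for every $\vp \in \La(\cc X, \cc X_0)$.

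\emph{Compactness step.} It remains to promote the strict inequality on each feasible map to a strict inequality on the infimum; this is the only place where finite-dimensionality is essential. If $\Ind_{\cp}(\cc X : \cc X_0) > 2$ there is nothing to prove, so I assume the index is at most $2$ and restrict attention to $K := \{\vp \in \La(\cc X, \cc X_0) : \norm{\vp(1)} \leq 2\}$. This set is nonempty by the Remark following the definition (equivalently Lemma \ref{lem:feasible-interior}). It is closed, being cut out by closed conditions: $\CP(\cc X)$ is a closed cone, the requirements $\vp(1) \in \bb C 1$ and $\vp(\cc X) \subseteq \cc X_0$ are preimages of closed subspaces under continuous maps, and $\vp - \id \in \CP(\cc X)$ is again closed. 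It is bounded because a positive map on an operator system satisfies $\norm{\vp} = \norm{\vp(1)} \leq 2$ on $K$. As $\dim \cc X < \infty$, the space of linear endomorphisms of $\cc X$ is finite-dimensional, so $K$ is compact.

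Finally, the map $\vp \mapsto \norm{\vp(1)}$ is continuous and therefore attains its minimum on the compact set $K$ at some $\vp_* \in \La(\cc X, \cc X_0)$; this minimum equals $\Ind_{\cp}(\cc X : \cc X_0)$, and by the pointwise estimate $\norm{\vp_*(1)} > 1$. I expect the only genuine bookkeeping to lie in verifying closedness and boundedness of $K$; the order-unit computation is routine, and the heart of the matter is simply that $\vp = \id$ is the unique unital map with $\vp - \id$ positive, which is excluded precisely by the properness of $\cc X_0 \subsetneq \cc X$.
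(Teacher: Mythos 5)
Your argument is correct and follows essentially the same route as the paper: finite-dimensionality gives compactness so the infimum is attained, and an attained value of $1$ would force $\vp=\id$, contradicting properness of $\cc X_0$. You in fact supply details the paper leaves implicit (the order-unit argument showing a positive map killing $1$ is zero, and the boundedness of the sublevel set via $\norm{\vp}=\norm{\vp(1)}$), so no changes are needed beyond perhaps loosening the cutoff $2$ to ensure $K$ is nonempty when the infimum equals $2$.
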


\begin{proof}
    Indeed, we have that $\La(\cc X,\cc X_0)$ is closed in $\CB(\cc X)$, which is locally compact since $\cc X$ is finite dimensional. We have that $\La(\cc X, \cc X_0)$ is non-empty by the previous remark. Thus if $\Ind_{\cp}(\cc X, \cc X_0)=1$ there is a unital completely positive map $\vp: \cc X\to \cc X_0$ so that $\vp - \id\in \cp(\cc X)$. However, $(\vp - \id)(1)=0$, so this would imply $\vp = \id$.
\end{proof}

We begin by proving that the CP-index is submultiplicative relative to functorial injective operator system tensor products. 

\begin{lem}\label{lem: functorial op sys tensor functorial wrt CP_1}
Let $\mathcal X$ and $\mathcal Y$ be two operator systems, and suppose $\alpha$ is a functorial operator system tensor product. Then $\alpha$ is functorial relative to elements of $\cp_1$. That is, if $u \in \cp_1(\mathcal X)$ and $v \in \cp_1(\mathcal Y)$, then $u \otimes v\in \cp_1(\mathcal X \otimes_\alpha \mathcal Y).$ 
\end{lem}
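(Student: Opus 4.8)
The plan is to reduce the statement to the defining functoriality property of $\alpha$, which is phrased for unital completely positive maps, by rescaling the given $\cp_1$ maps so that they become unital. First I would observe that any $u \in \cp_1(\cc X)$ sends $1_X$ to a scalar multiple of itself: since $u(\bb C 1_X) = \bb C 1_X$, we have $u(1_X) = c\,1_X$ for some $c \in \bb C$, and the requirement that $u(\bb C 1_X)$ be all of $\bb C 1_X$ forces $c \neq 0$. Because $u$ is completely positive, hence positive, and $1_X \in \cc X^+$, the element $c\,1_X = u(1_X)$ lies in $\cc X^+$; combined with $c \neq 0$ and the cone axiom $\cc X^+ \cap -\cc X^+ = \{0\}$ this yields $c > 0$. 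Thus $\tilde u := c^{-1} u$ is a well-defined unital map, and since a strictly positive scalar multiple of a completely positive map is again completely positive (each matricial cone $\cc C_n$ being closed under multiplication by positive reals), $\tilde u \in \UCP(\cc X)$. The identical argument applied to $v \in \cp_1(\cc Y)$ produces a scalar $d > 0$ and a map $\tilde v := d^{-1} v \in \UCP(\cc Y)$.

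Next I would invoke the functoriality of $\alpha$ directly, taking $\cc S = \cc X$ and $\cc T = \cc Y$: from $\tilde u \in \UCP(\cc X)$ and $\tilde v \in \UCP(\cc Y)$ we obtain $\tilde u \otimes \tilde v \in \UCP(\cc X \otimes_\alpha \cc Y)$, and in particular $\tilde u \otimes \tilde v$ is completely positive. Unwinding the normalizations gives $u \otimes v = (c\,\tilde u) \otimes (d\,\tilde v) = cd\,(\tilde u \otimes \tilde v)$, which is a strictly positive scalar multiple of a completely positive map and is therefore completely positive on $\cc X \otimes_\alpha \cc Y$.

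Finally I would check that $u \otimes v$ preserves the scalar line. Computing on the unit, $(u \otimes v)(1_X \otimes 1_Y) = u(1_X) \otimes v(1_Y) = cd\,(1_X \otimes 1_Y)$, so the image of $\bb C\,(1_X \otimes 1_Y)$ under $u \otimes v$ equals $\bb C\,(cd\,(1_X \otimes 1_Y)) = \bb C\,(1_X \otimes 1_Y)$ since $cd > 0$. Hence $u \otimes v \in \cp_1(\cc X \otimes_\alpha \cc Y)$, as claimed. There is no serious obstacle here: the entire content is the normalization trick together with the observation that the rescaling constants are strictly positive, so that dividing by them (to apply functoriality) and multiplying back (to recover $u \otimes v$) both remain inside the cone of completely positive maps. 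The only point demanding any care is verifying that $c$ and $d$ are strictly positive rather than merely nonzero, which is precisely what guarantees $cd > 0$ in the last step.
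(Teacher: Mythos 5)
Your proof is correct and follows essentially the same route as the paper's: rescale $u$ and $v$ by the (necessarily strictly positive) scalars $c,d$ with $u(1_X)=c\,1_X$ and $v(1_Y)=d\,1_Y$ to get unital completely positive maps, apply functoriality of $\alpha$, and multiply back by $cd>0$. Your extra care in deriving $c,d>0$ from positivity and the cone axiom, and in checking that $u\otimes v$ preserves the scalar line, only makes explicit what the paper leaves implicit.
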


\begin{proof}
We may assume $u$ and $v$ are non-zero, so we have $u(1_X) = s\, 1_X$ and $v(1_Y) = t\, 1_Y$ for some $s,t>0$. Thus, we have that $\frac{1}{s}\, u \in \text{UCP}(\mathcal X)$ and $\frac{1}{t}\, v \in \text{UCP}(\mathcal Y)$. By functoriality of $\alpha$ it then follows $\frac{1}{st}\, u\otimes v \in \ucp(\cc X\otimes_\alpha \cc Y)$. This proves the claim since the action by a positive scalar preserves complete positivity. \qedhere
\end{proof}

Given an inclusion of operator systems $\mathcal X_0 \subset \mathcal X, \mathcal Y_0 \subset \mathcal Y$, and an operator system tensor product $\alpha$, then we let $\mathcal X_0 \otimes_\text{rel} \mathcal Y_0$ denote the \emph{relative} operator system tensor product, i.e., the operator system structure on $\cc X_0\otimes \cc Y_0$ obtained by restriction of $\mathcal X \otimes_\alpha \mathcal Y$.

\begin{prop}\label{prop: CP-index is submultiplicative}
Let $\mathcal X_0 \subset \mathcal X$ and $\mathcal Y_0 \subset \mathcal Y$ be two inclusions of operator systems and let $\alpha$ be a functorial operator system tensor product. Set $\cc R:= \mathcal X \otimes_\alpha \mathcal Y$ and similarly set $\mathcal R_0 := \mathcal X_0 \otimes_\text{rel}\mathcal Y_0$. Then \begin{align*}
    \Ind_{\cp}(\mathcal R, \mathcal R_0) \leq \Ind_{\cp}(\mathcal X, \mathcal X_0) \Ind_{\cp}(\mathcal Y, \mathcal Y_0).
\end{align*}
\end{prop}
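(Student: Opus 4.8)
The plan is to take near-optimal maps on each factor and tensor them together, then check that the product map lies in the relevant $\Lambda$-set for the tensor inclusion. Concretely, let $\varphi \in \Lambda(\mathcal X, \mathcal X_0)$ and $\psi \in \Lambda(\mathcal Y, \mathcal Y_0)$ be feasible maps, so that $\varphi \in \cp_1(\mathcal X)$ with $\varphi(\mathcal X) \subset \mathcal X_0$ and $\varphi - \id_{\mathcal X} \in \cp(\mathcal X)$, and similarly for $\psi$. My candidate for the tensor inclusion is $\varphi \otimes \psi : \mathcal R \to \mathcal R$. First I would verify that $\varphi \otimes \psi \in \cp_1(\mathcal R)$: by Lemma \ref{lem: functorial op sys tensor functorial wrt CP_1}, functoriality of $\alpha$ gives $\varphi \otimes \psi \in \cp_1(\mathcal X \otimes_\alpha \mathcal Y) = \cp_1(\mathcal R)$, and since $\varphi(\mathcal X) \subset \mathcal X_0$ and $\psi(\mathcal Y) \subset \mathcal Y_0$ we get $(\varphi \otimes \psi)(\mathcal R) \subset \mathcal X_0 \otimes \mathcal Y_0$, which carries exactly the relative operator system structure $\mathcal R_0 = \mathcal X_0 \otimes_{\text{rel}} \mathcal Y_0$ by definition.

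The main point to establish is the domination condition $\varphi \otimes \psi - \id_{\mathcal R} \in \cp(\mathcal R)$. I would write this as a sum of two complete positivities by inserting an intermediate term:
\begin{align*}
\varphi \otimes \psi - \id \otimes \id = (\varphi - \id) \otimes \psi + \id \otimes (\psi - \id).
\end{align*}
Here I would use that $\varphi - \id \in \cp(\mathcal X)$ and $\psi \in \cp_1(\mathcal Y)$, so by functoriality (more precisely, by the $\cp_1$-functoriality of Lemma \ref{lem: functorial op sys tensor functorial wrt CP_1} applied to these completely positive maps, scaling as needed to land in $\ucp$) the map $(\varphi - \id) \otimes \psi$ is completely positive on $\mathcal R$; the second summand $\id \otimes (\psi - \id)$ is handled symmetrically using $\psi - \id \in \cp(\mathcal Y)$ and $\id \in \cp_1(\mathcal X)$. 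Adding the two completely positive maps gives $\varphi \otimes \psi - \id \in \cp(\mathcal R)$. This confirms $\varphi \otimes \psi \in \Lambda(\mathcal R, \mathcal R_0)$.

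It then remains to estimate the objective value. Since $\varphi(1_X) = \|\varphi(1_X)\| \, 1_X$ and $\psi(1_Y) = \|\psi(1_Y)\| \, 1_Y$ (both images are scalar multiples of the respective units, as the maps lie in $\cp_1$), we have $(\varphi \otimes \psi)(1_{\mathcal R}) = \|\varphi(1_X)\| \, \|\psi(1_Y)\| \, 1_{\mathcal R}$, whence $\|(\varphi \otimes \psi)(1_{\mathcal R})\| = \|\varphi(1_X)\| \, \|\psi(1_Y)\|$. Taking the infimum over feasible $\varphi$ and $\psi$ separately yields $\Ind_{\cp}(\mathcal R, \mathcal R_0) \leq \Ind_{\cp}(\mathcal X, \mathcal X_0)\,\Ind_{\cp}(\mathcal Y, \mathcal Y_0)$. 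The one subtlety I would be careful about is the degenerate case where one of the two indices is infinite (so one $\Lambda$-set is empty), in which case the bound is vacuous; otherwise the product of feasible maps is feasible and the estimate goes through. The genuinely substantive step is the complete positivity of the mixed terms $(\varphi - \id)\otimes\psi$, which is where functoriality of $\alpha$ is essential — this is the only place the hypothesis on $\alpha$ is used.
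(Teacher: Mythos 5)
Your proposal is correct and follows essentially the same route as the paper: tensor feasible maps, use the $\cp_1$-functoriality lemma, and split $\varphi\otimes\psi-\id_{\cc R}$ as a sum of two completely positive cross terms (the paper inserts $\varphi\otimes\id_{\cc Y}$ as the intermediate term rather than your $\id_{\cc X}\otimes\psi$, an immaterial difference). The factorization of the objective value and the passage to infima are handled the same way.
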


\begin{proof}
We first prove $\Lambda(\cc X,\cc X_0) \otimes \Lambda(\cc Y,\cc Y_0) \subset \Lambda(\cc R,\cc R_0).$ To this end, let $u\in \Lambda(\cc X,\cc X_0)$ and let $v \in \Lambda(\cc Y,\cc Y_0),$.  We have $u\otimes v(\cc R) \subset \mathcal R_0$ by linearity, and by Lemma~\ref{lem: functorial op sys tensor functorial wrt CP_1} we know $u\otimes v \in \CP_1(\mathcal R).$  It remains to show $u \otimes v - \id_{\cc R} \in \CP_1(\mathcal R). $ By assumption, $u - \id_{\mathcal X} \in \CP_1(\mathcal X)$; therefore, $(u - \id_{\mathcal X}) \otimes \id_{\mathcal Y} \in \CP_1(\mathcal R)$ by Lemma~\ref{lem: functorial op sys tensor functorial wrt CP_1}. Similarly, it follows that $u \otimes (v - \id_{\mathcal Y}) \in \CP_1(\mathcal R).$ Thus, \begin{align*}
    u\otimes v - \id_{\cc R} = u\otimes (v - \id_{\cc Y}) + (u - \id_{\cc X})\otimes \id_{\cc Y}\in \CP_1(\cc R).
\end{align*}
This proves that $u\otimes v \in \Lambda(\cc R,\cc R_0).$ We therefore deduce the following: 
\begin{align*}
    \Ind_{\cp}(\mathcal R: \mathcal R_0) &\leq \inf\{\|u \otimes v(1_{\mathcal R})\|: u\otimes v\in \Lambda(\cc X,\cc X_0) \otimes \Lambda(\cc Y,\cc Y_0)\} \\
        &=\inf \{\|u(1_{\mathcal X})\|\cdot \|v(1_{\mathcal Y})\|: u\in \Lambda(\cc X,\cc X_0), v\in \Lambda(\cc Y,\cc Y_0)\} \\
        &= \Ind_{\cp}(\mathcal X: \mathcal X_0) \Ind_{\cp} (\mathcal Y: \mathcal Y_0). \qedhere
\end{align*}
\end{proof}

As an immediate corollary we deduce the following: 

\begin{cor}\label{cor:min-submultiplicative}
Let $\mathcal X_0 \subset \mathcal X$ and $\mathcal Y_0 \subset \mathcal Y$ be inclusions of operator systems. Then \begin{align*}
     \Ind_{\cp}(\mathcal X \otimes_{\min} \mathcal Y: \mathcal X_0 \otimes_{\min} \mathcal Y_0) \leq \Ind_{\cp}(\mathcal X: \mathcal X_0) \Ind_{\cp}(\mathcal Y: \mathcal Y_0).
\end{align*}
\end{cor}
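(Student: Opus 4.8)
The plan is to specialize Proposition~\ref{prop: CP-index is submultiplicative} to the minimal tensor product and then identify the relative structure with the intrinsic minimal structure.

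First I would recall that $\otimes_{\min}$ is a functorial operator system tensor product, as established in \cite{KPTT2011}, so Proposition~\ref{prop: CP-index is submultiplicative} applies with $\alpha = \min$. Writing $\cc R = \cc X \otimes_{\min} \cc Y$ and $\cc R_0 = \cc X_0 \otimes_{\text{rel}} \cc Y_0$ for the relative structure obtained by restriction, this immediately yields
\[
  \Ind_{\cp}(\cc R : \cc R_0) \leq \Ind_{\cp}(\cc X : \cc X_0)\,\Ind_{\cp}(\cc Y : \cc Y_0).
\]

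The only point requiring justification is that the relative structure $\cc R_0$ agrees with the intrinsic minimal tensor product $\cc X_0 \otimes_{\min} \cc Y_0$; granting this, one has $\Ind_{\cp}(\cc R : \cc R_0) = \Ind_{\cp}(\cc X \otimes_{\min} \cc Y : \cc X_0 \otimes_{\min} \cc Y_0)$ and the corollary follows at once. This agreement is precisely the \emph{injectivity} of the minimal tensor product: for inclusions $\cc X_0 \subset \cc X$ and $\cc Y_0 \subset \cc Y$, the canonical map $\cc X_0 \otimes_{\min} \cc Y_0 \to \cc X \otimes_{\min} \cc Y$ is a complete order embedding. I would invoke this directly from \cite{KPTT2011}; alternatively, one can verify it at the level of matricial cones, since the minimal cones are defined by screening against all unital completely positive maps into matrix algebras, and every such map on $\cc X_0$ (resp.\ $\cc Y_0$) extends to one on $\cc X$ (resp.\ $\cc Y$) by Arveson's extension theorem, so restricting the cones of $\cc X \otimes_{\min} \cc Y$ recovers exactly the minimal cones on $\cc X_0 \otimes \cc Y_0$.

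I do not anticipate a genuine obstacle: the inequality is a formal consequence of the proposition once functoriality and injectivity of $\min$ are in hand. The one subtlety worth flagging is that the submultiplicativity of Proposition~\ref{prop: CP-index is submultiplicative} for a general functorial $\alpha$ is stated relative to $\cc X_0 \otimes_{\text{rel}} \cc Y_0$, and it is specifically the injectivity of the minimal tensor product that permits replacing this relative structure by the honest minimal tensor product $\cc X_0 \otimes_{\min} \cc Y_0$ appearing in the statement of the corollary.
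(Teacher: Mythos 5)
Your proposal is correct and matches the paper's (implicit) argument: the corollary is deduced from Proposition~\ref{prop: CP-index is submultiplicative} by taking $\alpha = \min$, using functoriality of the minimal tensor product and its injectivity to identify the relative structure $\cc X_0 \otimes_{\text{rel}} \cc Y_0$ with $\cc X_0 \otimes_{\min} \cc Y_0$. The paper leaves these two facts unstated, so your explicit justification of injectivity via Arveson extension is a welcome, if unnecessary, addition.
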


We now turn to proving multiplicativity of the $\cp$-index for inclusions of finite-dimensional operator systems. For the remainder of the section, $\cc X$ will be a finite-dimensional operator system. We are only really interested in $\cc X$ or its dual $\cc X^*$ as a Banach space with a canonical real (hermitian) structure. 

Let $\cc X_0\subset \cc X$ be a subsystem. One can always construct a hermitian basis $\{x_0=1, x_1,\dotsc, x_n\}$ for $\cc X$ with the property that $\cc X_0 = \Span\{x_0, \dotsc, x_k\}$ for some $1\leq k\leq n$. We fix $k$ throughout. Given such a basis, let $\{\de_0,\de_1,\dotsc, \de_n\}$ be a complimentary Auerbach basis which induces the hermitian structure on $\cc X^*$. The element $\id_{\cc X}: \cc X\to \cc X$ can be identified with the tensor $\sum_{i=0}^n \delta_i \otimes x_i$ which can be, in turn, viewed as ``a scale of the maximally entangled state'' on $\cc X^*\otimes \cc X$ as defined in \cite[Appendix A]{Kavruk2015}.

Consider $\cc C := \CP_1(\cc X)\subset \cc X^*\otimes \cc X$ which is a hermitian cone under the canonical hermitian structure on $\cc X^*\otimes \cc X$. 
We denote by $\cc C^\circ\subset \cc X\otimes \cc X^*$ the polar cone which is also hermitian. 

\begin{prop}
The following convex program computes $\Ind_{\cp}(\cc X: \cc X_0)-1$:

\begin{equation}\label{index-opsys-primal}
    \begin{aligned}
        & \textup{minimize} && \ip{\vp}{1\otimes \de_0} &&&\\
        & \textup{s.t} && \ip{\vp+\id_{\cc X}}{x_i\otimes \de_j} = 0, &&& 1\leq i\leq n; k < j \leq n\\
        & && \vp\in \cc C. &&&\\
    \end{aligned}
\end{equation}
\end{prop}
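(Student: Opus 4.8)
The plan is to reformulate the convex program as a linear reparametrization of the defining optimization for $\Ind_{\cp}(\cc X : \cc X_0)$ via the substitution $\vp \mapsto \vp + \id_{\cc X}$, and then to match feasibility and objective values term by term.

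First I would fix the identification of a linear map $\psi : \cc X \to \cc X$ with its tensor $\sum_{i,j} \psi_{ji}\, \delta_i \otimes x_j \in \cc X^* \otimes \cc X$, where $\psi(x_i) = \sum_j \psi_{ji}\, x_j$; in particular $\id_{\cc X} = \sum_i \delta_i \otimes x_i$, consistent with the text. Under the canonical pairing of $\cc X^* \otimes \cc X$ with $\cc X \otimes \cc X^*$, and using that $\{\delta_i\}$ is dual to $\{x_i\}$, a direct computation gives $\ip{\psi}{x_i \otimes \delta_j} = \psi_{ji}$, i.e.\ the coefficient of $x_j$ in $\psi(x_i)$. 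Applied to $\psi = \vp + \id_{\cc X}$, the constraint $\ip{\vp + \id_{\cc X}}{x_i \otimes \delta_j} = 0$ over $1 \le i \le n$ and $k < j \le n$ says exactly that $(\vp + \id_{\cc X})(x_i)$ has no component along the basis vectors $x_{k+1}, \dotsc, x_n$ spanning a complement of $\cc X_0$; that is, $(\vp + \id_{\cc X})(x_i) \in \cc X_0$ for every $i \ge 1$. Likewise, pairing against $1 \otimes \delta_0$ extracts the $(0,0)$-entry, so the objective $\ip{\vp}{1 \otimes \delta_0}$ equals the coefficient of $1$ in $\vp(1)$.

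Next I would verify that $\Phi \mapsto \Phi - \id_{\cc X}$ is a bijection between $\Lambda(\cc X, \cc X_0)$ and the feasible region of the program, with inverse $\vp \mapsto \vp + \id_{\cc X}$. For the forward direction, if $\Phi \in \Lambda(\cc X, \cc X_0)$ then $\vp := \Phi - \id_{\cc X} \in \CP(\cc X)$ by definition, and since $\Phi \in \CP_1(\cc X)$ forces $\Phi(1) \in \bb C 1$ we get $\vp(1) = \Phi(1) - 1 \in \bb C 1$, so $\vp \in \CP_1(\cc X) = \cc C$; the constraints hold because $\Phi(\cc X) \subset \cc X_0$ implies $(\vp + \id_{\cc X})(x_i) = \Phi(x_i) \in \cc X_0$. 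Conversely, if $\vp$ is feasible then $\Phi := \vp + \id_{\cc X} \in \CP(\cc X)$ as a sum of completely positive maps, $\Phi(1) = \vp(1) + 1 \in \bb C 1$ gives $\Phi \in \CP_1(\cc X)$, and $\Phi - \id_{\cc X} = \vp \in \CP(\cc X)$; the constraints give $\Phi(x_i) \in \cc X_0$ for $i \ge 1$, while $\Phi(1) \in \bb C 1 \subset \cc X_0$ handles $i = 0$, so $\Phi(\cc X) \subset \cc X_0$ and $\Phi \in \Lambda(\cc X, \cc X_0)$.

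Finally I would match the objectives. Writing $\vp(1) = \mu\, 1$ with $\mu \ge 0$ (as $\vp$ is completely positive and $1 \ge 0$), the second paragraph gives $\ip{\vp}{1 \otimes \delta_0} = \mu$, while $\Phi = \vp + \id_{\cc X}$ satisfies $\Phi(1) = (\mu + 1)\, 1$ and hence $\norm{\Phi(1)} = \mu + 1$ since $\norm{1} = 1$. Thus $\ip{\vp}{1 \otimes \delta_0} = \norm{\Phi(1)} - 1$, and taking the infimum over the bijectively identified feasible sets yields that the optimal value of the program equals $\Ind_{\cp}(\cc X : \cc X_0) - 1$. I expect the main obstacle to be purely notational: pinning down the pairing conventions between $\cc X^* \otimes \cc X$ and $\cc X \otimes \cc X^*$ so that the identification of $\id_{\cc X}$ with $\sum_i \delta_i \otimes x_i$ and the matrix-entry reading $\ip{\psi}{x_i \otimes \delta_j} = \psi_{ji}$ are simultaneously consistent; once these are fixed, the equivalence of feasible sets and objectives is bookkeeping. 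A minor point to confirm is that the hermitian structure on $\cc X^* \otimes \cc X$ is respected throughout, which holds because all maps involved are self-adjointness-preserving and the chosen bases are hermitian.
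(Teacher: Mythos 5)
Your proposal is correct and follows essentially the same route as the paper's proof: the change of variables $\vp = \Phi - \id_{\cc X}$, with the linear constraints encoding $\Phi(\cc X)\subset\cc X_0$ and the objective tracking $\|\Phi(1)\|-1$. You simply spell out the bookkeeping (the pairing conventions, the bijection of feasible sets, and the $i=0$ case being absorbed by $\Phi(1)\in\bb C1\subset\cc X_0$) that the paper leaves implicit.
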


\begin{proof}
    Let us make the change of variables $\vp = \psi - \id_{\cc X}$. Since $\vp,\psi\in \cc C$ we have that $\|\vp(1)\| = \ip{\vp}{1\otimes\de_0} = \|\psi\| -1$. The first constraint shows that $\psi(\cc X)\subset \cc X_0$, while the second constraint shows that $\psi\in \cp_1(\cc X)$. \qedhere
\end{proof}

Notice that $\cc X\otimes \cc X_0^* = \Span\{x_i\otimes\de_j : i=1,\dotsc,n;\ j=k+1,\dotsc,n\}$. Therefore, following \cite[section 4.7]{GartnerMatousek2012}, the dual program to this is:

\begin{equation}\label{index-opsys-dual}
    \begin{aligned}
        & \text{maximize} && \ip{\id_{\cc X}}{f} &&&\\
        & \text{s.t.} && f\in \cc X\otimes (\cc X_0)^\perp &&&\\
        & && f + 1\otimes \de_0\in \cc C^\circ.
    \end{aligned}
\end{equation}

\begin{lem}\label{lem:feasible-interior}
    The feasible set of (\ref{index-opsys-primal}) has an interior point.
\end{lem}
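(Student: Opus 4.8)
The plan is to produce an explicit strictly feasible point, i.e.\ a map $\vp$ satisfying the equality constraints that also lies in the (relative) interior of the cone $\cc C=\CP_1(\cc X)$; this is precisely the Slater-type condition one needs in order to invoke strong duality between (\ref{index-opsys-primal}) and (\ref{index-opsys-dual}) in the sense of \cite[section 4.7]{GartnerMatousek2012}. Fix a concrete realization $\cc X\subseteq B(H)$ with $\dim H=d<\infty$ and a faithful state $\omega=\Tr(\rho\,\cdot\,)$ on $B(H)$, so that $\rho>0$. Let $\Phi\colon\cc X\to\cc X$ be the map $\Phi(x)=\omega(x)\,1$; it is unital completely positive with range $\bb C1\subseteq\cc X_0$. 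For a scalar $\lambda>0$ I would set
\[
  \vp_\lambda := \lambda\,\Phi-\id_{\cc X},
\]
and claim that $\vp_\lambda$ is feasible and interior for all sufficiently large $\lambda$.

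First I would check feasibility. Since $\vp_\lambda(1)=(\lambda-1)1\in\bb C1$, the map $\vp_\lambda$ meets the order-unit constraint $\vp(\bb C1)\subseteq\bb C1$ defining $\CP_1$. The associated $\psi=\vp_\lambda+\id_{\cc X}=\lambda\Phi$ has range $\bb C1\subseteq\cc X_0$, so for every $1\le i\le n$ and $k<j\le n$ we have $\ip{\vp_\lambda+\id_{\cc X}}{x_i\otimes\de_j}=\lambda\,\ip{\Phi}{x_i\otimes\de_j}=0$, because the coefficient of $x_j$ in $\Phi(x_i)=\omega(x_i)x_0$ vanishes for $j\ge 1$; thus the equality constraints of (\ref{index-opsys-primal}) hold. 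It then remains to see that $\vp_\lambda\in\cc C$, for which I pass to Choi matrices: one computes $C_\Phi=\rho^{\mathsf T}\otimes I>0$, while $C_{\id}$ is a fixed positive matrix, so
\[
  C_{\vp_\lambda}=\lambda\,\rho^{\mathsf T}\otimes I-C_{\id}\ \ge\ \big(\lambda\,\lambda_{\min}(\rho)-\|C_{\id}\|\big)\,I\ >\ 0
\]
as soon as $\lambda\,\lambda_{\min}(\rho)>\|C_{\id}\|$. Hence $\vp_\lambda$ is completely positive and $\vp_\lambda\in\CP_1(\cc X)=\cc C$.

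The final and most delicate step is to certify that $\vp_\lambda$ lies in the interior of the feasible cone rather than merely on its boundary. The point is that in the Choi picture $\cc C$ is the intersection of the affine condition $\vp(1)\in\bb C1$ with a positive-semidefinite cone, and $C_{\vp_\lambda}$ is \emph{strictly} positive definite; perturbing $\vp_\lambda$ by any Hermitian-preserving $w$ with $w(1)\in\bb C1$ and $\|w\|$ small keeps the Choi matrix positive and preserves the order-unit condition, so $\vp_\lambda+w\in\cc C$. This places $\vp_\lambda$ in the relative interior of $\cc C$ (relative to the subspace $\{u:u(1)\in\bb C1\}$ in which the program lives), which is the desired interior point. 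The hard part will be the bookkeeping of the correct ambient space: because the constraint $\vp(1)\in\bb C1$ makes $\cc C$ thin inside $\cc X^*\otimes\cc X$, one must read ``interior'' relative to that subspace, and for a proper subsystem $\cc X\subsetneq B(H)$ one must verify that strict positivity of the Choi matrix (computed via an extension of $\Phi$ and $\id$ to $B(H)$) genuinely certifies membership in the interior of the cone $\CP_1(\cc X)$ of maps of $\cc X$ into itself. I would resolve this by using the identification of $\CP(\cc X,\cc X)$ with the positive cone of the associated operator-system tensor product, whose relative interior consists exactly of the order-unit-dominating elements, and checking that $\lambda\Phi-\id_{\cc X}$ is such an element for $\lambda$ large.
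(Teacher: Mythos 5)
Your proposal is correct and follows essentially the same route as the paper: the witness is a large multiple of a (faithful) state tensored with the unit, minus the identity, and interiority ultimately rests on identifying $\CP(\cc X)$ with the positive cone of $\cc X^*\otimes_{\min}\cc X$ and recognizing the witness as an order-unit--dominating element. The paper simply cites Kavruk for the order-unit facts where you verify strict positivity concretely via the Choi matrix of an extension to $B(H)$; your version is fine provided you also extend the small perturbations $w$ to $B(H)$ (with controlled norm) before invoking the Choi-matrix bound, which is routine in finite dimensions.
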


\begin{proof}
    By \cite[Lemma 2.5]{Kavruk2014}, we have that $\de_0$ is an order unit for $\cc X^*$, so $\de_0\otimes 1$ is an order unit for $\cc X^*\otimes_{\min} \cc X$. By \cite[Proposition 6.1]{Kavruk2014}, we have that $\CP(\cc X)$ is identified with the positive cone in $\cc X^*\otimes_{\min} \cc X$; hence, there is a constant $K$ so that for all $L\geq K$, we have that $L\, \de_0\otimes 1 - \id\in \CP_1(X) = \cc C$.
\end{proof}

\begin{prop}\label{prop:super-multiplicative}
    The values of the convex programs (\ref{index-opsys-primal}) and (\ref{index-opsys-dual}) agree, and we have that for any two pairs of inclusions of finite-dimensional operator systems $\cc X_0\subset \cc X$ and $\cc Y_0\subset \cc Y$ that
    \begin{equation}
        \Ind_{\cp}(\cc X\otimes_{\min} \cc Y: \cc X_0\otimes_{\min} \cc Y_0)\geq \Ind_{\cp}(\cc X: \cc X_0)\cdot \Ind_{\cp}(\cc Y: \cc Y_0).
    \end{equation}
\end{prop}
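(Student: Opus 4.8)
The plan is to establish the two claims in turn: strong duality between \eqref{index-opsys-primal} and \eqref{index-opsys-dual}, and then the supermultiplicative bound, which I will read off from strong duality by producing a single dual-feasible point for the product inclusion out of optimal dual points for the two factors. For strong duality, note that both programs are finite conic programs over the cone $\cc C=\CP_1(\cc X)$ and its polar $\cc C^\circ$ with finitely many affine constraints. Lemma~\ref{lem:feasible-interior} supplies an interior feasible point for \eqref{index-opsys-primal}, so Slater's condition holds; the conic duality theorem (\cite[\S4.7]{GartnerMatousek2012}) then gives equality of optimal values together with attainment of the dual optimum \eqref{index-opsys-dual}. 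Finite-dimensionality guarantees that $\cc X^*$ is again an operator system and keeps all feasible sets closed, so there are no regularity obstructions; this is the routine half.

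For supermultiplicativity, I first fix compatible hermitian and Auerbach bases so that $\{x_i\otimes y_j\}$ is a basis of $\cc X\otimes\cc Y$ with complementary dual basis $\{\de_i\otimes\de_j\}$; in particular $\de_0^{\cc X\otimes\cc Y}=\de_0^{\cc X}\otimes\de_0^{\cc Y}$ and $\id_{\cc X\otimes\cc Y}=\id_{\cc X}\otimes\id_{\cc Y}$ after the canonical shuffle. Invoking strong duality for the two factors, I pick dual-optimal $f^\ast$ and $g^\ast$ with $\ip{\id_{\cc X}}{f^\ast}=\Ind_{\cp}(\cc X:\cc X_0)-1$ and $\ip{\id_{\cc Y}}{g^\ast}=\Ind_{\cp}(\cc Y:\cc Y_0)-1$, and set
\[
F:=\bigl(f^\ast+1\otimes\de_0^{\cc X}\bigr)\otimes\bigl(g^\ast+1\otimes\de_0^{\cc Y}\bigr)-1\otimes\de_0^{\cc X\otimes\cc Y}.
\]
The affine constraint is routine: expanding $F$ into $f^\ast\otimes g^\ast$, $f^\ast\otimes(1\otimes\de_0^{\cc Y})$ and $(1\otimes\de_0^{\cc X})\otimes g^\ast$ (the fourth term cancels the subtracted $1\otimes\de_0^{\cc X\otimes\cc Y}$), each term has its dual leg in $(\cc X_0\otimes\cc Y_0)^\perp=\Span\{\de_i^{\cc X}\otimes\de_j^{\cc Y}: i>k\text{ or }j>l\}$, where $l$ is the cut-off for $\cc Y_0\subset\cc Y$ analogous to $k$, because $f^\ast$ and $g^\ast$ already have their dual legs in $(\cc X_0)^\perp$ and $(\cc Y_0)^\perp$. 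Using $\ip{\id}{1\otimes\de_0}=1$ in each system, the objective factors as $\ip{\id_{\cc X}}{f^\ast+1\otimes\de_0^{\cc X}}\cdot\ip{\id_{\cc Y}}{g^\ast+1\otimes\de_0^{\cc Y}}-1=\Ind_{\cp}(\cc X:\cc X_0)\,\Ind_{\cp}(\cc Y:\cc Y_0)-1$. Granting feasibility of $F$, weak duality for the product program yields $\Ind_{\cp}(\cc X\otimes_{\min}\cc Y:\cc X_0\otimes_{\min}\cc Y_0)-1\ge\ip{\id}{F}$, which is precisely the claim.

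The crux, and the step I expect to be the main obstacle, is the cone constraint $F+1\otimes\de_0^{\cc X\otimes\cc Y}=(f^\ast+1\otimes\de_0^{\cc X})\otimes(g^\ast+1\otimes\de_0^{\cc Y})\in\cc C^\circ$, that is, that the tensor of two polar-cone elements is again polar to $\CP_1(\cc X\otimes_{\min}\cc Y)$. I plan to prove this through the min--max duality of operator system tensor products: by \cite[Prop.~6.1]{Kavruk2014} and the identity $(\cc X\otimes_{\min}\cc Y)^*=\cc X^*\otimes_{\max}\cc Y^*$, the polar cone $\CP_1(\cc X)^\circ$ is realized as a positive cone sitting inside a maximal tensor product; products of positive elements are positive in every operator system tensor product, and a shuffle comparison of $\otimes_{\min}$ with $\otimes_{\max}$ transports the product $u\otimes v$ into the target polar cone. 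Equivalently, and more transparently, I would show that slicing any $\Phi\in\CP_1(\cc X\otimes_{\min}\cc Y)$ against the $\cc Y$-witness $g^\ast+1\otimes\de_0^{\cc Y}$ returns a map in $\CP_1(\cc X)$, so that its pairing with $f^\ast+1\otimes\de_0^{\cc X}\in\CP_1(\cc X)^\circ$ carries the sign forced by membership in the polar cone; this positivity of slice maps is exactly the feature characterizing the injective (minimal) tensor product, and is where the hypothesis $\otimes_{\min}$ is essential. The unital normalization (that we use $\CP_1$ rather than $\CP$) is absorbed into the explicit $1\otimes\de_0$ summands, and I would check separately that it does not disturb the slicing computation.
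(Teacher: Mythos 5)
Your proposal is correct and follows essentially the same route as the paper: strong duality via Lemma \ref{lem:feasible-interior} and the G\"artner--Matou\v{s}ek conic duality theorem, followed by the construction of a dual-feasible point for the product whose expansion $f^\ast\otimes g^\ast + f^\ast\otimes(1\otimes\de_0^{\cc Y}) + (1\otimes\de_0^{\cc X})\otimes g^\ast$ is exactly the paper's element $h$, with the argument hinging on the same containment $\cp_1(\cc X)^\circ\otimes\cp_1(\cc Y)^\circ\subset \cp_1(\cc X\otimes_{\min}\cc Y)^\circ$. The only difference is that you sketch a justification (via min--max duality of tensor products and slice maps) for that containment, which the paper simply asserts.
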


\begin{proof}
    The arguments in \cite[sections 4.6 and 4.7]{GartnerMatousek2012} can be seen to apply equally well to hermitian cones in complex finite-dimensional vector spaces by taking the real subspace spanned by the cone. Hence, by Strong Duality for convex programs \cite[Theorem 4.7.3]{GartnerMatousek2012} and Lemma \ref{lem:feasible-interior} the convex program (\ref{index-opsys-primal}) and its dual (\ref{index-opsys-dual}) achieve the same value.
    
    Let $f\in \cc X\otimes (\cc X_0)^\perp$ and $g\in \cc Y\otimes (\cc Y_0)^\perp$ be optimal solutions to the program (\ref{index-opsys-dual}) for $\cc X_0\subset \cc X$ and $\cc Y_0\subset \cc Y$, respectively, and let \[h = f\otimes g + (1_X\otimes \de_0^X)\otimes g + f\otimes (1_Y\otimes \de_0^Y).\] It can be checked that $h\in (\cc X\otimes \cc Y)\otimes (\cc X_0\otimes\cc Y_0)^\perp$ and that
    \[h + (1_X\otimes 1_Y)\otimes (\de_0^X\otimes\de_0^Y) = (f + 1_X\otimes \de_0^X)\otimes (g + 1_Y\otimes\de_0^Y)\in \CP_1(\cc X\otimes_{\min}\cc Y)^\circ,\]
    since $\cp_1(\cc X)^\circ\otimes\cp_1(\cc Y)^\circ\subset \cp_1(\cc X\otimes \cc Y)^\circ$.
    Thus, by some basic arithmetic
    \begin{equation}
        \begin{split}
            \Ind_{\cp}(\cc X\otimes_{\min} \cc Y: \cc X_0\otimes_{\min} \cc Y_0) -1 &\geq \ip{\id}{h}\\
            &= \ip{\id_{\cc X}}{f}\ip{\id_{\cc Y}}{g} + \ip{\id_{\cc X}}{f} + \ip{\id_{\cc Y}}{g}\\
            &=\Ind_{\cp}(\cc X: \cc X_0)\cdot \Ind_{\cp}(\cc Y: \cc Y_0) -1.
        \end{split}
    \end{equation}
    The result then follows. \qedhere
\end{proof}

Combining Proposition \ref{prop:super-multiplicative} with Corollary \ref{cor:min-submultiplicative} we have

\begin{cor}\label{cor:multiplicativity-cp-index}
   Let $\mathcal X_0 \subset \mathcal X$ and $\mathcal Y_0 \subset \mathcal Y$ be inclusions of finite-dimensional operator systems. Then \begin{align*}
     \Ind_{\cp}(\mathcal X \otimes_{\min} \mathcal Y: \mathcal X_0 \otimes_{\min} \mathcal Y_0) = \Ind_{\cp}(\mathcal X: \mathcal X_0) \Ind_{\cp}(\mathcal Y: \mathcal Y_0).
\end{align*}
\end{cor}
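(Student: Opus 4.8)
The plan is to obtain the stated equality by sandwiching the index of the product inclusion between the product of the factor indices from both sides, invoking the two results already established. Because $\mathcal X$ and $\mathcal Y$ are finite-dimensional, the feasible sets $\Lambda(\mathcal X,\mathcal X_0)$ and $\Lambda(\mathcal Y,\mathcal Y_0)$ are nonempty (via Lemma \ref{lem:feasible-interior}) and both factor indices are finite, so there are no ambiguities of the form $\infty\cdot(\text{finite})$ and all quantities in play are well-defined real numbers.

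First I would record the upper bound. Since the minimal tensor product is a functorial operator system tensor product, Corollary \ref{cor:min-submultiplicative} — itself the specialization of Proposition \ref{prop: CP-index is submultiplicative} to $\alpha = \min$ — gives at once
\[
\Ind_{\cp}(\mathcal X \otimes_{\min} \mathcal Y: \mathcal X_0 \otimes_{\min} \mathcal Y_0) \leq \Ind_{\cp}(\mathcal X: \mathcal X_0)\,\Ind_{\cp}(\mathcal Y: \mathcal Y_0).
\]
This is the soft half: it only requires exhibiting feasible maps on the product, and tensoring $u\in\Lambda(\mathcal X,\mathcal X_0)$ with $v\in\Lambda(\mathcal Y,\mathcal Y_0)$ produces an element of $\Lambda(\mathcal X\otimes_{\min}\mathcal Y,\mathcal X_0\otimes_{\min}\mathcal Y_0)$ satisfying $\norm{(u\otimes v)(1)} = \norm{u(1)}\,\norm{v(1)}$, so the infimum over products dominates the product of infima.

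Next I would invoke the reverse inequality, which is the genuinely substantive half. Proposition \ref{prop:super-multiplicative} supplies
\[
\Ind_{\cp}(\mathcal X \otimes_{\min} \mathcal Y: \mathcal X_0 \otimes_{\min} \mathcal Y_0) \geq \Ind_{\cp}(\mathcal X: \mathcal X_0)\,\Ind_{\cp}(\mathcal Y: \mathcal Y_0).
\]
The work for this direction has already been front-loaded into that proposition: one passes to the dual convex program (\ref{index-opsys-dual}), uses the interior-point condition of Lemma \ref{lem:feasible-interior} to rule out a duality gap, and tensors the dual-optimal solutions together with the correction terms displayed there to produce a dual-feasible $h$ whose objective value equals the product of the two indices. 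The one nontrivial containment invoked is $\cp_1(\mathcal X)^\circ \otimes \cp_1(\mathcal Y)^\circ \subset \cp_1(\mathcal X\otimes\mathcal Y)^\circ$, which keeps $h$ dual-feasible for the product inclusion.

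Combining the two displayed inequalities forces equality, completing the proof. Accordingly, the only real obstacle sits conceptually upstream of this corollary rather than in its bookkeeping: it is the lower bound, resting on strong duality for the convex program, where finite-dimensionality is indispensable — both to ensure the Banach dual $\mathcal X^*$ is again an operator system and to legitimize the strong duality theorem under the Slater-type hypothesis of Lemma \ref{lem:feasible-interior}. Once those inputs are granted, the present statement is purely a matter of reading off the two inequalities and intersecting them.
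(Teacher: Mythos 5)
Your proposal is correct and follows exactly the paper's own argument: the corollary is obtained by combining the submultiplicativity of Corollary \ref{cor:min-submultiplicative} with the supermultiplicativity of Proposition \ref{prop:super-multiplicative}. The additional commentary you give on where the real work lies (strong duality and the interior-point condition of Lemma \ref{lem:feasible-interior}) accurately reflects the content already established upstream in the paper.
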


The following special case of the previous result is worth pointing out in its own right.

\begin{cor}
    For any inclusion $\cc X_0\subset \cc X$ of finite-dimensional operator systems and all $n\in \bb N$ we have that 
    \[\Ind_{\cp}(M_n(\cc X) : M_n(\cc X_0)) = \Ind_{\cp}(\cc X: \cc X_0).\]
\end{cor}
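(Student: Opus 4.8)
The plan is to realize this as the special case of Corollary~\ref{cor:multiplicativity-cp-index} in which the second inclusion is the trivial inclusion $M_n\subset M_n$. First I would invoke the standard identification from operator system tensor theory \cite{KPTT2011}: for any operator system $\cc X$, the matricial system $M_n(\cc X)$ is unitally and completely order isomorphic to $\cc X\otimes_{\min} M_n$, naturally in $\cc X$. Applying this to both $\cc X$ and to the subsystem $\cc X_0$, and using that $\otimes_{\min}$ is injective (functorial under inclusions), the inclusion $M_n(\cc X_0)\subset M_n(\cc X)$ corresponds under these identifications precisely to $\cc X_0\otimes_{\min} M_n\subset \cc X\otimes_{\min} M_n$. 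Hence
\[
\Ind_{\cp}(M_n(\cc X):M_n(\cc X_0)) = \Ind_{\cp}(\cc X\otimes_{\min} M_n : \cc X_0\otimes_{\min} M_n).
\]

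Next, since $M_n$ is finite dimensional, Corollary~\ref{cor:multiplicativity-cp-index} applies with $\cc Y=\cc Y_0=M_n$, giving
\[
\Ind_{\cp}(\cc X\otimes_{\min} M_n : \cc X_0\otimes_{\min} M_n) = \Ind_{\cp}(\cc X:\cc X_0)\cdot \Ind_{\cp}(M_n:M_n).
\]
It then remains to evaluate the index of the trivial inclusion. On one hand $\id_{M_n}\in\Lambda(M_n,M_n)$ with $\|\id(1)\|=1$, so $\Ind_{\cp}(M_n:M_n)\le 1$. On the other hand, any admissible $\vp\in\CP_1(M_n)$ satisfies $\vp(1)=s\,1$ for some $s\ge 0$, and the constraint $\vp-\id\in\CP(M_n)$ forces $(\vp-\id)(1)=(s-1)1\ge 0$, whence $s\ge 1$ and $\|\vp(1)\|\ge 1$. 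Therefore $\Ind_{\cp}(M_n:M_n)=1$, and combining the two displays yields the claim.

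The only genuine content beyond this bookkeeping is the operator-system identification $M_n(\cc X)\simeq \cc X\otimes_{\min} M_n$, which I would cite rather than reprove. Everything else is direct substitution into Corollary~\ref{cor:multiplicativity-cp-index} together with the one-line verification that a trivial inclusion has index $1$; there is no real obstacle, which is why the statement is recorded as a corollary.
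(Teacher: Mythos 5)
Your proposal is correct and is exactly the argument the paper intends: the corollary is stated without proof as a special case of Corollary \ref{cor:multiplicativity-cp-index}, obtained by identifying $M_n(\cc X)\simeq \cc X\otimes_{\min}M_n$ and taking $\cc Y=\cc Y_0=M_n$, together with the observation that the trivial inclusion has index $1$. Your verification that $\Ind_{\cp}(M_n:M_n)=1$ is the right one-line supplement.
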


For a finite-dimensional operator system $\cc X$, we see that
\begin{equation}
    \tilde{\la}(\cc X):=\Ind_{\cp}(\cc X: \bb C1)
\end{equation} can be thought of as a sort of ``quantum dimension'' for $\cc X$ since $\tilde{\la}(\cc X\otimes_{\min} \cc Y) = \tilde{\la}(\cc X)\tilde{\la}(\cc Y)$. To further strengthen the analogy, we can consider the direct sum $\cc X\oplus_\infty \cc Y$. It is straightforward to check that $\tilde{\la}(\cc X\oplus_\infty \cc Y)=\max\{\tilde{\la}(\cc X),\ \tilde{\la}(\cc Y)\}$. As an illustration, we compute this number for matrix algebras. 

\begin{prop}\label{prop:trace}
    We have that $\tilde{\la}(M_n) = n$ for all $n\in \bb N$.
\end{prop}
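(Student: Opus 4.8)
The plan is to compute $\tilde\la(M_n)=\Ind_{\cp}(M_n:\bb C1)$ by producing matching primal and dual optima for the convex program (\ref{index-opsys-primal}). By Proposition~\ref{prop:super-multiplicative} strong duality holds, so it suffices to exhibit a feasible $\vp\in\La(M_n,\bb C1)$ and a dual-feasible $f$ for (\ref{index-opsys-dual}) with equal objective value. The organizing idea is a symmetrization argument. The feasible set $\La(M_n,\bb C1)$ is invariant under the $U(n)$-action $\vp\mapsto \Ad_u\circ\vp\circ\Ad_{u^*}$, since conjugations fix $\bb C1$, preserve complete positivity, and commute with $\id_{M_n}$; moreover $\|\vp(1)\|$ is constant on each orbit. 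Averaging an optimal $\vp$ over $U(n)$ against Haar measure therefore yields a $U(n)$-invariant optimizer, and any $U(n)$-invariant completely positive map with range in $\bb C1$ has the form $x\mapsto c\,\Tr(x)\,1$; equivalently it is a scalar multiple of the trace-preserving conditional expectation $E(x)=\tfrac1n\Tr(x)\,1$, the unique $\ucp$ projection of $M_n$ onto $\bb C1$. This collapses the entire problem to a one-parameter optimization over the maps $\vp=\la E$.

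With this reduction in hand, I would establish the upper bound by determining the smallest $\la$ for which $\vp=\la E$ lies in $\La(M_n,\bb C1)$, i.e. for which $\la E-\id$ is completely positive. Using the Choi--Jamio\l kowski correspondence — equivalently, the identification of $\id$ with the maximally entangled tensor $\sum_i\de_i\otimes x_i$ from the setup preceding (\ref{index-opsys-primal}) — the Choi matrix of $\la E-\id$ is a scalar multiple of $I\otimes I$ minus the maximally entangled projection, so its positivity is governed by a single eigenvalue inequality that pins down the threshold $\la$; then $\|\vp(1)\|=\|\la E(1)\|=\la$ gives the stated value. Because the $U(n)$-average of any feasible point has the same objective, this threshold $\la$ is simultaneously a lower bound, so the symmetrization already delivers both inequalities; alternatively one repackages the eigenvalue certificate as an explicit dual-feasible $f\in M_n\otimes(\bb C1)^\perp$ for (\ref{index-opsys-dual}) and invokes strong duality directly.

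The step I expect to be the main obstacle — and where all the care resides — is the exact constant that emerges from the complete-positivity threshold, since it is sensitive to the normalization of the maximally entangled state used to represent $\id$ in $M_n^*\otimes M_n$. Getting the Choi-matrix eigenvalue bookkeeping right (the top eigenvalue of the entangled projection against the scalar part $\la\,I\otimes I$) is precisely what fixes the value, so I would nail down the normalizations of $\de_0$ and of $\sum_i\de_i\otimes x_i$ at the outset and track their scalings throughout rather than argue up to constants. The symmetrization is robust and I am confident it isolates $\vp=\la E$ as the optimizer; what remains is the scalar eigenvalue computation, which has the pleasant side effect of making the multiplicativity $\tilde\la(M_n\otimes_{\min}M_m)=\tilde\la(M_{nm})$ manifest and consistent with Corollary~\ref{cor:multiplicativity-cp-index}.
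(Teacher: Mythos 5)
Your strategy coincides with the paper's: average over the $U(n)$-action $\vp\mapsto \Ad_u\circ\vp\circ\Ad_{u^*}$ to reduce the problem to scalar multiples of the trace-preserving conditional expectation $E=\tau_n(\cdot)1$, then read the complete-positivity threshold off the Choi matrix. That symmetrization is set up correctly and matches the paper's proof essentially step for step, so there is no disagreement of method.

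The genuine gap is that you never perform the one computation that carries the entire content of the proposition --- and you yourself flag it as the dangerous step. Carry it out: the Choi matrix of $\la E-\id$ is
\[
\sum_{i,j}E_{ij}\otimes\Bigl(\tfrac{\la}{n}\,\delta_{ij}I_n-E_{ij}\Bigr)=\tfrac{\la}{n}\,I_{n^2}-\Delta_n ,
\]
and $\Delta_n=\xi\xi^*$ for $\xi=\sum_i e_i\otimes e_i$ with $\|\xi\|^2=n$, so $\|\Delta_n\|=n$ and the Choi matrix is positive semidefinite if and only if $\la/n\geq n$, i.e.\ $\la\geq n^2$. Since $\|\la E(1)\|=\la$, your outline, completed honestly, returns the value $n^2$ rather than $n$; the value $n$ is the threshold for $\la E-\id$ to be merely \emph{positive} (test on rank-one projections and average), i.e.\ it computes $\Ind(M_n:\bb C1)$ rather than $\Ind_{\cp}(M_n:\bb C1)$. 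Be aware that the paper's own proof asserts at the identical point that $\tfrac{c}{n}I_{n^2}-\Delta_n\succeq 0$ iff $c\geq n$, which conflates $\Delta_n$ with the normalized maximally entangled state $\tfrac1n\Delta_n$. You therefore cannot leave the eigenvalue bookkeeping as a promissory note: you must either find an error in the computation above or conclude that the constant in the statement needs to be revisited, because this single scalar inequality is exactly where the proposition is decided.
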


\begin{proof}
    We fix a completely positive map $\Phi: M_n\to \bb C1$ which is given by $\Phi: X\mapsto \phi(X)1$, where $\phi:M_n\to \bb C$ is a state. For any state $\phi$ we have that 
    \[\tau_n(x) = \int_{U(n)} \phi(uxu^*) du,\] where $\tau_n$ is the normalized trace on $M_n$ and $du$ is the Haar measure on the unitary group $U(n)$ of $M_n$ since the right hand side is unital and invariant under conjugation by $U(n)$. 
    
    Notice that if $\Psi: M_n\to M_n$ is any map so that $c\cdot\Psi- \id\in \cp(M_n)$ for some $c$ and $u\in U(n)$, setting
    \begin{equation}
        \Psi^u(x) := u\Psi(u^*xu)u^*
    \end{equation}
    we have that $c\cdot\Psi^u(x)- \id\in \cp(M_n)$ as well. Thus if $c\cdot\Phi- \id\in \cp(M_n)$ for all, it follows that $T_n :x \mapsto c\cdot \tau_n(x) 1- x$ is completely positive. The Choi matrix of $T_n$ is 
    \begin{equation*}
        \sum_{i,j} E_{ij}\otimes T_n(E_{ij}) = \sum_{i,j} \frac{c}{n}\cdot E_{ii}\otimes E_{jj} - E_{ij}\otimes E_{ij},
    \end{equation*}
    so it is easy to see that the Choi matrix is positive, hence $T_n$ is completely positive (see the proof of Proposition \ref{prop:cp-ind-primal} below), if and only if $c\geq n$. \qedhere
\end{proof}

\begin{remark}
    By Hahn--Banach it follows that $\tilde{\la}(\cc S)\leq n$ for all subsystems $\cc S\subset M_n$. It is an easy exercise to check that $\tilde{\la}(D_n)=n$ where $D_n\subset M_n$ is the diagonal matrices. In particular this implies that $D_n$ is not completely order isomorphic to a subsystem of $M_k$ for any $k< n$.  It follows that if $\cc S_\G\subset M_n$ is the subsystem given by a graph $\G = (V,E)$ with $|V|=n$ as defined at (\ref{eq:S-gamma}) below, then $\tilde{\la}(S_\G)=n$.
\end{remark}

We end this section with the following natural question. It is clear from the definitions that $\Ind(M_n(\cc X): M_n(\cc X_0))\leq \Ind_{\cp}(\cc X: \cc X_0)$ for all $n\in\bb N$.

\begin{question}
    Does $\Ind_{\cp}(\cc X: \cc X_0) = \limsup_{n\to\infty} \Ind(M_n(\cc X): M_n(\cc X_0))$ for any inclusion of operator systems?
\end{question}
We point out that in the matricial system case, the above question is immediate by \emph{Smith's Lemma} \cite[Lemma B.4]{BrownOzawa} which states that given any bounded map $\varphi: \cc X \to M_n$, where $\cc X$ is an operator space, then $\vp$ is completely bounded with $\cbnorm{\vp} = \norm{\vp\otimes \id_{M_n}}.$

\begin{remark}
    In the case of an inclusion of II$_1$ factors $\cc M_0\subset \cc M$, it is immediate that $\Ind_{\cp}(\cc M: \cc M_0)\leq [\cc M : \cc M_0]$, while equality remains unclear except in the case that $(\cc M_0\subset \cc M)\cong (M_k\otimes \cc N\subset M_n\otimes\cc N)$ for some inclusion $M_k\subset M_n$.
    
    For a finite-index inclusion of II$_1$ factors $\cc N\subset \cc M$, one can consider the operator system $\bb X_{\cc M}^{\cc N}$ consisting of the space of all completely bounded (normal) $\cc N$-bimodular maps from $\cc M$ to itself with $\id_{\cc M}$ serving as the order unit. That $\bb X_{\cc M}^{\cc N}$ is finite-dimensional follows from \cite{PimsnerPopa1986}. For the inclusion $1\otimes\cc N\subset M_n\otimes\cc N$, we have that $\bb X_{\cc M}^{\cc N}$ is canonically isomorphic to $M_n\otimes M_n$; hence,
    \begin{conj}
        $\tilde\la(\bb X_{\cc M}^{\cc N}) = [\cc M:\cc N]^2.$
    \end{conj}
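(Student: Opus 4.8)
The plan is to prove the conjecture first for the model inclusion $1\otimes\cc N\subset M_n\otimes\cc N$, where $\bb X_{\cc M}^{\cc N}\cong M_n\otimes M_n$, and then to attack the general finite-index case through the basic construction. The first step is to make the identification of the operator system $\bb X_{\cc M}^{\cc N}$ precise. For the model inclusion, any normal completely bounded $\cc N$-bimodular map $T:\cc M\to\cc M$ must have the form $T=S\otimes\id_{\cc N}$ for a unique $S\in\CB(M_n)$, since bimodularity forces the coefficients of $T$ to lie in $\cc N'\cap\cc N=\bb C$; this yields the linear isomorphism $\bb X_{\cc M}^{\cc N}\cong\CB(M_n)\cong M_n\otimes M_n$, under which $\id_{\cc M}\mapsto\id_{M_n}$ and the completely positive bimodular maps correspond to completely positive $S$. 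Equivalently, via $\cc M_1=\langle\cc M,e_{\cc N}\rangle$, one identifies $\bb X_{\cc M}^{\cc N}$ with the relative commutant $\cc N'\cap\cc M_1$, with $\id_{\cc M}\leftrightarrow 1$ and $E_{\cc N}\leftrightarrow e_{\cc N}$.

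Granting the identification, I would compute the two sides of the asserted equality separately. On the one hand, $[\cc M:\cc N]$ for the model inclusion is extracted from the Pimsner--Popa characterization as the least $\la$ with $\la E_{\cc N}-\id_{\cc M}$ completely positive; since $E_{\cc N}=\tau_n\otimes\id_{\cc N}$, this reduces to the Choi matrix computation in the proof of Proposition \ref{prop:trace} applied to $(\la\tau_n-\id_{M_n})\otimes\id_{\cc N}$. On the other hand, $\tilde\la(\bb X_{\cc M}^{\cc N})$ is to be computed from the operator system structure on $M_n\otimes M_n$ using the convex programming characterization together with Corollary \ref{cor:multiplicativity-cp-index} and Proposition \ref{prop:trace}. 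Matching the two values then gives $\tilde\la(\bb X_{\cc M}^{\cc N})=[\cc M:\cc N]^2$ in the model case.

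For a general finite-index inclusion the plan is to realize $\bb X_{\cc M}^{\cc N}$ as the finite-dimensional $C^*$-algebra $\cc N'\cap\cc M_1\cong\bigoplus_k M_{d_k}$ determined by the principal graph, with order unit $\id_{\cc M}$, to reduce $\tilde\la$ of a direct sum to a maximum via $\tilde\la(\cc X\oplus_\infty\cc Y)=\max\{\tilde\la(\cc X),\tilde\la(\cc Y)\}$, and to relate the block data $\{d_k\}$ to the standard (Watatani) index data whose total recovers $[\cc M:\cc N]$.

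The main obstacle is precisely the identification of the correct operator system structure on $\bb X_{\cc M}^{\cc N}$ and the resulting scaling of $\tilde\la$. If $M_n\otimes M_n$ carried the minimal (i.e.\ $C^*$-algebra) structure $M_{n^2}$, then multiplicativity and Proposition \ref{prop:trace} would give $\tilde\la=n^2$, matching only the first power of $[\cc M:\cc N]=n^2$; recovering the full square requires the coarser structure forced by the fact that $\id_{\cc M}$ is \emph{extremal} rather than interior in the completely positive cone (its Choi matrix is of rank one), so that its flatness relative to $E_{\cc N}$ is measured at a different scale. Controlling this structure—and, in the general case, its interaction with the non-split and irreducible combinatorics of $\cc N'\cap\cc M_1$, where minimal-tensor multiplicativity is no longer directly available—is where the essential difficulty lies, and is the reason the statement is recorded only as a conjecture.
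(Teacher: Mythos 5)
The first thing to say is that the paper offers no proof of this statement: it is recorded as a conjecture, supported only by the remark that for the model inclusion $1\otimes\cc N\subset M_n\otimes\cc N$ one has $\bb X_{\cc M}^{\cc N}\cong M_n\otimes M_n$. So there is no argument in the paper to compare yours against, and the only question is whether your plan would itself constitute a proof. It would not, and you essentially concede this in your final paragraph.

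The decisive gap is the one you name but do not fill: the matrix ordering on $\bb X_{\cc M}^{\cc N}$. Your model-case computation requires knowing the cones $M_k(\bb X_{\cc M}^{\cc N})^+$ explicitly, and the natural candidate --- the $C^*$-structure of $M_{n^2}$ under the Choi identification $\CB(M_n)\cong M_n\otimes M_n$ --- is ruled out twice over: first because, as you observe, Proposition \ref{prop:trace} and Corollary \ref{cor:multiplicativity-cp-index} would then give $\tilde\la=n^2=[\cc M:\cc N]$ rather than $[\cc M:\cc N]^2$; and second, more fundamentally, because the designated order unit $\id_{M_n}$ corresponds to $\Delta_n$, a scalar multiple of a rank-one projection, which cannot be an Archimedean order unit for the full cone $(M_{n^2})^+$ at all. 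Until the cones are specified so that $\id_{\cc M}$ genuinely majorizes them, the quantity $\tilde\la(\bb X_{\cc M}^{\cc N})$ is not something one can ``compute and match'' even in the model case; your proposal stops exactly where the conjecture begins. The general-case reduction has the same defect compounded: passing to $\cc N'\cap\cc M_1\cong\bigoplus_k M_{d_k}$ transfers the problem without solving it (the element corresponding to $\id_{\cc M}$ need not be the algebra unit, so the formula $\tilde\la(\cc X\oplus_\infty\cc Y)=\max\{\tilde\la(\cc X),\tilde\la(\cc Y)\}$ does not apply to that ordered space), and a maximum over blocks discards the trace weights through which $[\cc M:\cc N]$ is actually encoded in the higher relative commutant. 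In short, your proposal is a sensible research plan that correctly locates the difficulty, but it is not a proof, and the statement remains open.
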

\end{remark}

\section{Examples and applications}\label{sec:examples}

\subsection{The $\cp$-index of a matricial system}\label{sec:cp-matricial}
We begin by showing that for matricial systems $\cc S\subset M_n$ the $\cp$-index $\Ind_{\cp}(M_n:\cc S)$ is computable by a semidefinite program. We then identify the dual program and use this to relate $\Ind_{\cp}(M_n:\cc S)$ to the quantum Lov\'asz theta invariant $\widetilde\vartheta(\cc S)$ defined by Duan, Severini, and Winter \cite{Winter2013}.  For background material on complex semidefinite programming, we refer the reader to \cite[Section 2]{AGKS}.

\begin{prop}\label{prop:cp-ind-primal}
    Let $\cc S\subset M_n$ be a matricial system. We have that $\la = \Ind_{\textup{CP}}(M_n:\cc S)^{-1}$ can be expressed by the following semidefinite program:
    \begin{equation}\label{eq:index-primal}
        \begin{aligned}
            &\textup{maximize} &&\la \\
            &\textup{subject to} &&\tr\otimes\id(X) = (1-\la)I_n\\
            & && X +\la\Delta_n\in M_n\otimes\cc S\\
            & && X\in (M_n\otimes M_n)^+.
        \end{aligned}
    \end{equation}
    Here $\Delta_n = \sum_{i,j} E_{ij}\otimes E_{ij}$.
\end{prop}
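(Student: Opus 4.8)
The plan is to translate the definition of $\Ind_{\cp}(M_n:\cc S)$ directly into the language of Choi matrices and then recognize the resulting feasibility problem as the stated semidefinite program. Recall that $\Ind_{\cp}(M_n:\cc S) = \inf\{\norm{\vp(1)} : \vp \in \CP_1(M_n),\ \vp(M_n)\subset \cc S,\ \vp - \id \in \CP(M_n)\}$. Setting $\la = \Ind_{\cp}(M_n:\cc S)^{-1}$, the natural move is to rescale: if $\vp$ is feasible with $\vp(1) = c\,I_n$ for $c = \norm{\vp(1)}$, then writing $\la = 1/c$ one studies the completely positive map $\la\,\vp$ and the map $X \mapsto \la\,\vp(X) - \la\,\id$ reorganized so that the constraint $\vp - \id\in\CP$ becomes the positivity of a Choi matrix. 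First I would let $X$ denote the Choi matrix of the completely positive map $\la(\vp - \id)$, i.e. $X = \sum_{i,j} E_{ij}\otimes \la(\vp-\id)(E_{ij})$, so that $X\in (M_n\otimes M_n)^+$ encodes exactly the condition $\vp - \id\in \CP(M_n)$ (Choi's theorem), with the scale $\la$ chosen so the program is stated in terms of maximizing $\la$ rather than minimizing $c$.

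Next I would unpack each of the three SDP constraints against the definition. The unitality-type normalization $\vp\in\CP_1(M_n)$ together with $\vp(1)=c\,I_n=\la^{-1}I_n$ should be captured by the trace condition: applying the partial trace $\tr\otimes\id$ to the Choi matrix of a map recovers (the adjoint action on) the image of the identity, and here $\tr\otimes\id(X) = \la\,\vp(1) - \la\,I_n = I_n - \la\,I_n = (1-\la)I_n$, which is exactly the first constraint. The condition $\vp(M_n)\subset \cc S$ translates into the Choi matrix of $\vp$ lying in $M_n\otimes\cc S$; since the Choi matrix of $\id$ is precisely $\Delta_n = \sum_{i,j}E_{ij}\otimes E_{ij}$, the Choi matrix of $\vp$ is $\la^{-1}(X + \la\Delta_n)$, so requiring this to lie in $M_n\otimes\cc S$ is equivalent to $X + \la\Delta_n\in M_n\otimes\cc S$, matching the second constraint. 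Thus the feasible points of the SDP correspond bijectively to feasible $\vp$ for the index, with the objective $\la = c^{-1}$, and maximizing $\la$ is the same as minimizing $c = \norm{\vp(1)}$.

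The step I expect to require the most care is the bookkeeping around $\CP_1$ versus genuine unitality. An element of $\CP_1(M_n)$ satisfies $\vp(1)=c\,I_n$ for a scalar $c\geq 0$ but need not be unital; I must verify that every feasible $X$ of the SDP really does arise from such a $\vp$ and that the scalar $c$ equals $\la^{-1}$, in particular confirming that $\tr\otimes\id$ correctly extracts $\vp(1)$ up to the normalization of the trace and that no off-diagonal freedom in $\vp(1)$ is silently permitted. I would therefore check both directions of the correspondence explicitly: given feasible $X$, define $\vp$ via its Choi matrix $\la^{-1}(X+\la\Delta_n)$, confirm via Choi's theorem that $\vp-\id$ is completely positive, that $\vp(M_n)\subset\cc S$, and that $\vp(1)=\la^{-1}I_n$ so $\norm{\vp(1)}=\la^{-1}$; conversely, given feasible $\vp$, form $X$ and verify the three constraints. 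The positivity claim $X\in(M_n\otimes M_n)^+\iff \vp-\id\in\CP(M_n)$ is just Choi's theorem and is the routine part; the genuine content is matching the normalization in the first and second constraints, which is where I would concentrate the verification.
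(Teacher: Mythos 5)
Your proposal is correct and follows essentially the same route as the paper: both identify $X$ with the Choi matrix of $\Phi-\la\,\id$ where $\Phi=\la\vp$ is the unital rescaling of a feasible $\vp\in\CP_1(M_n)$, invoke Choi's theorem for the positivity constraint, and read off the remaining two constraints as unitality (via $\tr\otimes\id(\Ch(\Psi))=\Psi(I_n)$) and range containment in $\cc S$. The extra care you flag about $\CP_1$ versus unitality is the right thing to check and is handled implicitly in the paper by working directly with the unital map $\Phi$.
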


\begin{proof}
    Indeed, given a linear map $\Phi: M_n\to M_n$, we define the \emph{Choi matrix} of $\Phi$
    \begin{equation}\label{eq:choi-matrix}
        \Ch(\Phi) := \sum_{i,j} E_{ij} \otimes \Phi(E_{ij}) \in M_n\otimes M_n.
    \end{equation}
    It is well-known that $\Phi$ is completely positive if and only if $\Ch(\Phi)$ is positive semidefinite. (See, for instance, \cite[Theorem 3.14]{paulsen2002completely}.) Note that $\Delta_n$ is the Choi matrix of the identity map. To see that the program (\ref{eq:index-primal}) computes the $\cp$-index of $\cc S$ in $M_n$, we make a change of variables $X = Y - \la \Delta_n = \Ch(\Phi - \la\id)$. The last line thus reads, ``$\Phi - \la\id$ is completely positive,'' while the second-to-last line is equivalent to $\Phi(M_n)\subset \cc S$, and the first line is, $(\Phi - \la\id)(I_n ) = (1-\la)I_n$ or ``$\Phi$ is unital.'' \qedhere
\end{proof}

\begin{prop}\label{prop:dual-primal} The dual of the primal program (\ref{eq:index-primal}) is:

\begin{equation}\label{eq:index-dual}
    \begin{aligned}
        &\textup{minimize} &&\tr(X)\\
        &\textup{s.t.} && Y\in M_n\otimes (\cc S)^\perp\\
        & && \tr\otimes\tr((I_n\otimes X + Y)\Delta_n)= 1 \\
        & && I_n\otimes X + Y\in (M_n\otimes M_n)^+.
    \end{aligned}
\end{equation}
The primal and dual programs compute the same value.
\end{prop}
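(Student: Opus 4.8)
The plan is to treat (\ref{eq:index-primal}) as a complex semidefinite program over the real vector space of Hermitian matrices, in the formalism of \cite[Section 2]{AGKS}, and to read off (\ref{eq:index-dual}) as its Lagrangian dual. First I would put the primal in standard conic form: the decision variables are the free scalar $\la\in\R$ and the cone variable $X\in(M_n\otimes M_n)^+$, and the constraints are the $M_n$-valued equality $\tr\otimes\id(X)+\la I_n = I_n$ together with the subspace-membership condition $X+\la\Delta_n\in M_n\otimes\cc S$. The delicate point is the second constraint: since $M_n$ is self-dual under the trace pairing and $(M_n\otimes\cc S)^\perp = M_n\otimes\cc S^\perp$, membership in $M_n\otimes\cc S$ is exactly annihilation by every $Y\in M_n\otimes\cc S^\perp$, so the associated Lagrange multiplier lives naturally in $M_n\otimes\cc S^\perp$ (and may be taken Hermitian); the multiplier $W\in M_n$ for the first equality is Hermitian as well.

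With these multipliers I would form the Lagrangian
\[
L(\la,X;W,Y)=\la+\ip{W}{I_n-\tr\otimes\id(X)-\la I_n}-\ip{Y}{X+\la\Delta_n}
\]
and compute $\sup_{\la\in\R,\,X\succeq 0}L$. Grouping the $\la$-terms and the $X$-terms, the two dual constraints fall out of the finiteness of this supremum. The coefficient of $\la$ must vanish, which via the identities $\ip{W}{I_n}=\tr(W)$ and $\tr\otimes\tr((I_n\otimes W)\Delta_n)=\tr(W)$ (a one-line computation from $\Delta_n=\sum_{ij}E_{ij}\otimes E_{ij}$ and $\tr(E_{ij})=\de_{ij}$) rewrites as $\tr\otimes\tr((I_n\otimes W+Y)\Delta_n)=1$. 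For the supremum over $X\succeq 0$ to be finite, the linear functional $X\mapsto-\ip{I_n\otimes W+Y}{X}$ must be bounded above on the cone, which forces $I_n\otimes W+Y\in(M_n\otimes M_n)^+$; here I use the adjoint identity $(\tr\otimes\id)^*(W)=I_n\otimes W$, so that $\ip{W}{\tr\otimes\id(X)}=\ip{I_n\otimes W}{X}$, together with self-duality of the positive cone. The residual constant term is $\ip{W}{I_n}=\tr(W)$, the quantity being minimized, and renaming $W$ to $X$ reproduces (\ref{eq:index-dual}) verbatim.

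To conclude that the programs attain a common value I would verify Slater's condition for the primal and invoke the same strong-duality theorem used in Proposition \ref{prop:super-multiplicative}. The point $\la=0$, $X=\tfrac1n I_n\otimes I_n$ is strictly feasible: it is positive definite, lies in $M_n\otimes\cc S$ because $I_n\in\cc S$, and satisfies $\tr\otimes\id(\tfrac1n I_n\otimes I_n)=I_n$; this is the matricial analogue of Lemma \ref{lem:feasible-interior}. Tracing the first constraint yields $\tr(X)=(1-\la)n\ge 0$, so $\la\le 1$ and the primal value is finite. Strong duality for complex semidefinite programs in the presence of a strictly feasible point then gives equality of the optimal values with attainment, by \cite[Theorem 4.7.3]{GartnerMatousek2012}, exactly as in Proposition \ref{prop:super-multiplicative}. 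The step demanding the most care is the bookkeeping of the membership constraint: correctly placing its multiplier in the annihilator $M_n\otimes\cc S^\perp$ and matching the $\Delta_n$-pairings so that the dual linear constraint emerges in precisely the stated form, all while working in the real space of Hermitian matrices, where the self-duality of the positive cone is what powers the computation.
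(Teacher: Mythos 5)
Your proposal is correct and follows essentially the same route as the paper: both are mechanical conic dualizations of (\ref{eq:index-primal}) certified by the same strong-duality theorem at a strictly feasible point (the paper exhibits a positive definite Choi matrix of a depolarizing-type map with small $\la>0$, while you take $\la=0$, $X=\tfrac1n I_n\otimes I_n$, which works equally well). The only substantive difference is bookkeeping: the paper embeds $(X,\la)$ as a block-diagonal positive semidefinite matrix in $M_{n^2+1}$ and expands the membership constraint over an orthonormal Hermitian basis $\{G_k\}$ of $\cc S^\perp$, which (since $\la\geq 0$ there becomes a cone constraint) first yields the normalization as an inequality $\tr\otimes\tr((I_n\otimes X+Y)\Delta_n)\geq 1$ that must be tightened to equality at the optimum, whereas your Lagrangian with $\la$ free and the multiplier placed directly in the annihilator $M_n\otimes(\cc S)^\perp$ produces the stated dual verbatim.
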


\begin{proof} The proof is a routine computation, and proceeds similarly to the proof of \cite[Theorem 9]{Winter2013}. For ease of notation, given hermitian matrices $X,Y\in M_n$ we write $X\bullet Y$ to denote $\tr(XY)$. We again refer the reader to \cite[section 2]{AGKS} for background on complex semidefinite programming. Consider $M_{n^2+1}$ which we will give the basis indexing the entries by $F_{ij}$ where $(i,j)\in \{(0,0), \dotsc, (0, n), (1,0),\dotsc, (n,0)\}$ and $E_{ij}\otimes E_{kl}$ for $i,j,k,l\in \{1\dotsc,n\}$. We will consider the pair $(X,\la)$ in the primal program (\ref{eq:index-primal}) as a matrix in $M_{n^2+1}$ by $(X,\la)\mapsto \begin{pmatrix} \la & 0\\ 0 & X\end{pmatrix} = \la F_{00} + X$. The objective function thus becomes $F_{00}\bullet \tilde X$ where $\tilde X\in (M_{n^2+1})^+$. Let $G_1,\dotsc, G_m\in M_n$ be an orthonormal basis of hermitian matrices for $\cc S^\perp$. The constraints in (\ref{eq:index-primal}) then become:

\begin{equation}
    \begin{aligned}
        (F_{00} + I_n\otimes E_{ii})\bullet \tilde X = 1&, && i=1,\dotsc, n\\
        (I_n\otimes E_{ij})\bullet \tilde X = 0&, && i,j=1,\dotsc,n, \ i\not=j\\
        ([G_k]_{ij} F_{00} + E_{ij}\otimes G_k)\bullet \tilde X = 0&, && i,j=1,\dotsc,n,\ k=1,\dotsc,m.
    \end{aligned}
\end{equation}

The dual of (\ref{eq:index-primal}) thus becomes:

\begin{equation}
    \begin{aligned}
        & \text{minimize} && \tr(Y)\\
        & \text{subject to} && \tr(Y)F_{00} + I_n\otimes Y + \sum_{i,j,k} [Z_k]_{ij}[G_k]_{ij}F_{00} + \sum_k Z_k\otimes G_k - F_{00} \succeq 0.
    \end{aligned}
\end{equation}  
where $Y, Z_1,\dotsc, Z_m\in M_n$ are hermitian. Noting that $\tr(Y) = (I_n\otimes Y)\bullet \Delta_n$ and $\sum_{i,j} [Z_k]_{ij} [G_k]_{ij} = (Z_k\otimes G_k)\bullet \Delta_n$, we see the constraint in the previous program block decomposes into two constraints:

\begin{equation}
    \begin{aligned}
        & (I_n\otimes Y + \sum_{k} Z_k\otimes G_k)\bullet \Delta_n\geq 1\\
        & I_n\otimes Y + \sum_k Z_k\otimes G_k\succeq 0.
    \end{aligned}
\end{equation}
This verifies that the program (\ref{eq:index-dual}) is the dual of the program (\ref{eq:index-primal}). 

Since the Choi matrix of $X\mapsto \tr(X)I_n - \la X$ is positive definite for some $\la>0$, it follows from the Strong Duality Theorem for semidefinite programming (see \cite[Section 2]{AGKS} and the references therein) that the dual obtains the same value as the primal program. \qedhere
\end{proof}

\begin{remark}
    Proposition \ref{prop:dual-primal} offers an alternate proof of multiplicativity of the $\cp$-index for matricial systems. Indeed, let $\cc S\subset M_n$ and $\cc T\subset M_k$ be matricial systems. By Corollary \ref{cor:min-submultiplicative} it suffices to check that \[\Ind_{\cp}(M_{nk}:\cc S\otimes\cc T)\geq \Ind_{\cp}(M_n:\cc S)\cdot\Ind_{\cp}(M_k:\cc T).\]
    Let $\theta: M_n\otimes M_n\otimes M_k\otimes M_k\to M_{nk}\otimes M_{nk}$ be the shuffle isomorphism $\theta(A\otimes B\otimes C\otimes D) = A\otimes C\otimes B\otimes D$. If $X\in M_n$ and $Y\in M_n\otimes M_n$ satisfy the constraints for (\ref{eq:index-dual}) with respect to $\cc S$ and $Z\in M_k$ and $W\in M_k\otimes M_k$ do the same for $\cc T$, then it is easy to check that $X\otimes Z\in M_{nk}$ and $\theta(I_n\otimes X\otimes W + Y\otimes I_k\otimes Z + Y\otimes W)$ satisfy the constraints of (\ref{eq:index-dual}) with respect to $\cc S\otimes \cc T$. From this it follows that $\Ind_{\cp}(M_{nk}:\cc S\otimes \cc T)^{-1}\leq \tr(X)\tr(Z)$, which establishes the result.
\end{remark}

We now turn to describing the Lov\'asz theta invariant for a graph and its quantum version defined for matricial systems. Let $\G = (V, E)$ be a graph. We say that $v,w\in V$ are \emph{adjacent}, written $v\sim w$, if either $v=w$ or $(v,w)\in E$.
Consider a graph $\G$ on $n$ vertices which we will label as $1,\dotsc,n$. By a \emph{orthonormal representation} of the $\G$ we mean a set of unit vectors $\{x_1,\dotsc,x_n\}$ in a Euclidean space, such that if $i \nsim j$ then $x_i \perp x_j.$ We then define the \emph{value} of an orthonormal representation $\{x_1,\dotsc,x_n\}$ to be 
    \begin{align}
        \min_c \max_{1 \leq i \leq n} \frac{1}{(c^Tx_i)^2},
    \end{align} 
where $c$ ranges over all unit vectors. The vector $c$ yielding the minimum is called the \emph{handle} of the representation. The \emph{(classical) Lov\'asz theta} of the graph $\G$, $\vartheta(\G)$, as introduced in \cite{Lovasz1979}, is then defined as the minimum value over all representations of the graph $\G$. 

To a graph $\G$ on $n$ vertices we associate two matricial systems $\cc E_n\subset\cc S_\G\subset M_n$ defined by
    \begin{align}
        \label{eq:S-gamma} &\cc S_\G := \{X\in M_n : X_{ij}=0, i\not\sim j\},\\
        \label{eq:En} &\cc E_n := \{A\in M_n : A_{ii} = A_{jj},\ i,j=1,\dotsc,n\},\ \textup{and}\\
        \label{eq:E-Gamma} &\cc E_\G := \{A\in \cc E_n : A_{ij}=0, i\not\sim j\}.
    \end{align}
Given a graph $\G = (V,E)$ we define its \emph{complement} $\bar\G$ to be the graph $\bar\G = (V, \bar E)$, where $\bar E$ is the complement of the edge set. Notice that 
\begin{equation}\label{eq:E-S-complememt}
    \cc E_\G = \bb CI_n + (\cc S_{\overline\G})^\perp.
\end{equation}
It follows from \cite[Theorem 3.6.1]{GartnerMatousek2012} that
    \begin{equation}\label{eq:Lov\'asz-theta}
        \vartheta(\G) = \min\{A_{11} : A\in \cc E_\G,\ A\succeq J_n\} = \min\{\max_i B_{ii} : B\in \cc S_\G,\ B\succeq J_n\},
    \end{equation} where $J_n$ is the matrix with all entries equal to $1$. (See also \cite[Corollary 12]{Winter2013}.)
    
\begin{prop}\label{prop:cp-Lov\'asz-equal}
    For a graph $\G$ on $n$ vertices we have that 
    \[\Ind_{\cp}(M_n:\cc S_\G) = \Ind(M_n:\cc E_\G) =\vartheta(\G)\ \textup{and}\ \tilde\la(\cc E_{\G}) = \vartheta(\overline\G).\]
\end{prop}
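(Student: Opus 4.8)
The plan is to realize each of the three numbers as the value of a convex/semidefinite program and then to match that program with one of the two Lov\'asz programs recorded in (\ref{eq:Lov\'asz-theta}). The first equality is the one I would prove most carefully, since the other two are variations on it.

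For $\Ind_{\cp}(M_n:\cc S_\G)=\vartheta(\G)$ I start from the primal program (\ref{eq:index-primal}) of Proposition \ref{prop:cp-ind-primal}, which exhibits $\Ind_{\cp}(M_n:\cc S_\G)^{-1}$ as the largest $\la$ for which there is a unital completely positive $\Phi\colon M_n\to\cc S_\G$ with $\Phi-\la\,\id$ completely positive. The inequality $\Ind_{\cp}(M_n:\cc S_\G)\le\vartheta(\G)$ follows at once from a Schur multiplier read off from the first program in (\ref{eq:Lov\'asz-theta}): if $A\in\cc E_\G$ is optimal there (so $A\succeq J_n$, $A\in\cc S_\G$, and $A$ has constant diagonal $\vartheta(\G)$), then $\Phi(M)=A\circ M$ lies in $\cp_1(M_n)$, sends $M_n$ into $\cc S_\G$, satisfies $\Phi(1)=\vartheta(\G)I_n$, and $\Phi-\id=(A-J_n)\circ(\,\cdot\,)$ is completely positive because $A-J_n\succeq0$; this is a feasible competitor in the definition of the index. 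For the reverse inequality I would exploit symmetry. The substitution $\Phi\mapsto\Phi^U$, $\Phi^U(M)=U^*\Phi(UMU^*)U$, preserves feasibility and the value $\la$ for every diagonal unitary $U$, because conjugation by diagonal unitaries fixes $\cc S_\G$ and preserves (unital) complete positivity. Averaging an optimizer over the diagonal torus, I may assume $\Phi$ commutes with all such conjugations, which forces $E_{ij}\mapsto b_{ij}E_{ij}$ for $i\ne j$ and $E_{ii}\mapsto\sum_k d_{ki}E_{kk}$. Reading the Choi matrix exactly as in the proof of Proposition \ref{prop:cp-ind-primal}, complete positivity of $\Phi$ and of $\Phi-\la\,\id$ become $\tilde B\succeq0$ and $\tilde B\succeq\la J_n$ for the single matrix $\tilde B\in\cc S_\G$ with off-diagonal $b_{ij}$ and diagonal $d_{ii}$, while unitality forces each column $(d_{ki})_k$ to be a probability vector, i.e.\ $0\le\tilde B_{ii}\le1$. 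Rescaling $C=\tilde B/\la$ turns the maximization of $\la$ into $\min\{\max_i C_{ii}:C\in\cc S_\G,\ C\succeq J_n\}$, which is $\vartheta(\G)$ by the second program in (\ref{eq:Lov\'asz-theta}); hence $\Ind_{\cp}(M_n:\cc S_\G)=\vartheta(\G)$.

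The statements involving $\cc E_\G$ are where the complement $\overline\G$ enters, and the mechanism is the identity $\cc E_\G=\bb C I_n+\cc S_{\overline\G}^{\perp}$ from (\ref{eq:E-S-complememt}). For $\tilde\la(\cc E_\G)=\Ind_{\cp}(\cc E_\G:\bb C I_n)$ I would establish, via operator-system duality in finite dimensions, a duality between the inclusion $\bb C I_n\subset\bb C I_n+\cc S_{\overline\G}^{\perp}$ and the inclusion $\cc S_{\overline\G}\subset M_n$: a feasible $\vp=\mu(\,\cdot\,)I_n$ (with $\mu$ a positive functional on $\cc E_\G$ and $\mu(\,\cdot\,)I_n-\id$ completely positive) is paired, through its extending density matrix and the edge-entries that $\cc S_{\overline\G}^{\perp}$ detects, with a feasible point of the dual program (\ref{eq:index-dual}) for $\cc S_{\overline\G}\subset M_n$. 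Strong duality, supplied by Lemma \ref{lem:feasible-interior} and Proposition \ref{prop:dual-primal}, then identifies $\tilde\la(\cc E_\G)$ with $\Ind_{\cp}(M_n:\cc S_{\overline\G})$, which is $\vartheta(\overline\G)$ by the first equality applied to $\overline\G$. The remaining quantity is treated in the same framework: here one only requires $\vp-\id$ to be positive rather than completely positive while landing in the constant-diagonal system $\cc E_\G$, and I would again symmetrize over the diagonal torus and test positivity of $\vp-\id$ on rank-one inputs, reducing the constraints to a scalar Lov\'asz-type program.

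The main obstacle throughout is the bookkeeping that matches the (completely) positive constraints to the precise PSD-plus-support constraints of (\ref{eq:Lov\'asz-theta}) and pins down which graph, $\G$ or $\overline\G$, appears. This is genuinely delicate: the complement enters only through the annihilator identity (\ref{eq:E-S-complememt}) and the duality between $\cc S_\G$ and $\cc S_\G^{\perp}$, and it is entangled with the distinction between $\Ind_{\cp}$ and the merely positive index $\Ind$, so it is easy to be off by a complement or a transpose. Before committing to the final form I would sanity-check the two extreme cases $\G=K_n$ (where $\cc S_\G=M_n$ and $\cc E_\G=\cc E_n$) and the edgeless graph (where $\cc S_\G=D_n$ and $\cc E_\G=\bb C I_n$); these fix the orientation of every correspondence and expose any factor or complement that has been misplaced.
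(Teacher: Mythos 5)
Your handling of the first equality $\Ind_{\cp}(M_n:\cc S_\G)=\vartheta(\G)$ is correct and close to the paper's. The upper bound via the Schur multiplier $\de_A$ of an optimal $A\in\cc E_\G$ is exactly the paper's argument. For the lower bound you average over the diagonal torus and then read the Choi matrix of the symmetrized map on $\spn\{e_i\otimes e_i\}$; the paper gets the same matrix without any symmetrization, via Lemma \ref{lem:choi-expectation}, by observing that the compression of $\Ch(\phi)$ to that subspace is precisely $[\phi(E_{ij})_{ij}]$, which is PSD whenever $\phi$ is completely positive. Your route is a valid variant (the extra constraints $d_{ki}\geq 0$ and the indexing of the unitality condition are harmless), but note that the averaging buys you nothing here that the direct compression does not.

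The genuine gap is in the two assertions involving $\cc E_\G$. For $\tilde\la(\cc E_\G)=\vartheta(\overline\G)$ you propose to pair a feasible $\vp:\cc E_\G\to\bb CI_n$ with a feasible point of the dual SDP (\ref{eq:index-dual}) for $\cc S_{\overline\G}\subset M_n$, but that pairing is never written down, and the obstruction to writing it down is exactly the missing idea: $\vp-\id$ is a completely positive map defined only on the proper subsystem $\cc E_\G$, so it has no Choi matrix and no ``extending density matrix'' until you invoke Arveson's extension theorem (injectivity of $M_n$) to extend it to a completely positive map on all of $M_n$. That extension step is what the paper uses; after it, Lemma \ref{lem:choi-expectation} produces $A\in\cc E_{\overline\G}$ with $A\succeq J_n$ and $A_{ii}=\|\vp(1)\|$, giving $\vartheta(\overline\G)\leq\tilde\la(\cc E_\G)$ directly, with no appeal to SDP duality at all. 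Two further points your sketch leaves unresolved: the plain Schur multiplier $\de_A$ does \emph{not} map $M_n$ into $\cc E_\G$ (its diagonal is not constant), so the upper-bound witnesses for the $\cc E_\G$ statements must be the modified map $E_{ii}\mapsto A_{ii}I_n$, $E_{ij}\mapsto A_{ij}E_{ij}$; and for $\Ind(M_n:\cc E_\G)$ the lower bound only assumes $\vp-\id$ is \emph{positive}, whereas the PSD conclusion of Lemma \ref{lem:choi-expectation} genuinely uses complete positivity, so your plan to ``test positivity on rank-one inputs'' after symmetrizing needs to be carried out explicitly — as stated it is a plan, not a proof, and it is precisely where one can lose a complement or a factor.
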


Before proving this result we require one standard lemma.

\begin{lem}\label{lem:choi-expectation}
    If $\phi: M_n\to M_n$ is a completely positive map, then the matrix $A_{ij} := \phi(E_{ij})_{ij}$ is positive semidefinite. Moreover, $\max_i A_{ii} \leq \|\phi(I_n)\|$. 
\end{lem}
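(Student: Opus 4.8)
The plan is to prove the two assertions of Lemma~\ref{lem:choi-expectation} separately, since the positivity of the matrix $A = [\phi(E_{ij})_{ij}]_{ij}$ is the crux and the bound $\max_i A_{ii}\leq \|\phi(I_n)\|$ follows quickly afterward. First I would recognize $A_{ij} = \phi(E_{ij})_{ij}$ as a specific scalar extraction from the Choi matrix. Recall that the Choi matrix $\Ch(\phi) = \sum_{k,l} E_{kl}\otimes\phi(E_{kl})\in M_n\otimes M_n$ is positive semidefinite precisely because $\phi$ is completely positive (as cited from \cite[Theorem 3.14]{paulsen2002completely} in the proof of Proposition~\ref{prop:cp-ind-primal}). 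The entry $A_{ij}$ is the scalar that appears in the $(i,j)$-block of $\Ch(\phi)$ at the $(i,j)$-position within that block; that is, $A_{ij} = \langle e_i\otimes e_i,\ \Ch(\phi)\,(e_j\otimes e_j)\rangle$, where $\{e_i\}$ is the standard basis of $\C^n$.

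The key step is therefore to exhibit $A$ as a compression of the positive operator $\Ch(\phi)$ by an isometry. Let $v_j := e_j\otimes e_j\in \C^n\otimes\C^n$ for $j=1,\dotsc,n$; these are orthonormal, so the map $V:\C^n\to\C^n\otimes\C^n$ sending $e_j\mapsto v_j$ is an isometry. Then $A_{ij} = \langle v_i,\ \Ch(\phi)\,v_j\rangle = \langle e_i,\ V^*\Ch(\phi)V\,e_j\rangle$, which shows $A = V^*\Ch(\phi)V$. Since $\Ch(\phi)\succeq 0$ and compression by any linear map preserves positive semidefiniteness (indeed $\langle \xi, V^*\Ch(\phi)V\xi\rangle = \langle V\xi, \Ch(\phi)V\xi\rangle\geq 0$), we conclude $A\succeq 0$. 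I expect this identification of $A$ as a diagonal-block compression of the Choi matrix to be the main (and only real) obstacle; once it is set up correctly the positivity is immediate.

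For the second assertion, I would first note that positivity of $A$ gives $A_{ii}\geq 0$ and in fact $\max_i A_{ii} = \max_i\phi(E_{ii})_{ii}$. Since each $E_{ii}\leq I_n$ and $\phi$ is positive (being completely positive), we have $\phi(E_{ii})\leq\phi(I_n)$, so $\phi(E_{ii})_{ii}\leq\phi(I_n)_{ii}\leq\|\phi(I_n)\|$, where the final inequality holds because a diagonal entry of a positive semidefinite matrix is bounded by its operator norm. Taking the maximum over $i$ yields $\max_i A_{ii}\leq\|\phi(I_n)\|$, completing the proof.
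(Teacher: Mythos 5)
Your proof is correct, but the route to positive semidefiniteness of $A$ differs from the paper's. You realize $A$ as the compression $V^*\Ch(\phi)V$, where $V:\C^n\to\C^n\otimes\C^n$ is the isometry $e_j\mapsto e_j\otimes e_j$; the identity $A_{ij}=\langle e_i\otimes e_i,\Ch(\phi)(e_j\otimes e_j)\rangle$ checks out, and positivity of $A$ is then immediate from positivity of the Choi matrix. The paper instead argues by duality: it Schur-multiplies $\Ch(\phi)$ against $\Delta(B)=\sum_{ij}B_{ij}E_{ij}\otimes E_{ij}$ for $B\succeq 0$ (using that $\Delta$ is a $\ast$-embedding, so $\Delta(B)\succeq 0$, and the Schur product theorem), then pairs with $\Delta(J_n)$ to get $\tr(AB)\geq 0$ for all positive $B$, hence $A\succeq 0$. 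Both arguments hinge on positivity of $\Ch(\phi)$, but yours is more direct and elementary --- a single compression rather than a Schur-product-plus-trace-duality detour --- and it makes transparent exactly which corner of the Choi matrix $A$ is. The paper's version has the mild advantage of exhibiting the pairing $\tr(AB)$ that reappears in the Lov\'asz-theta computations nearby. For the norm bound the two proofs are essentially identical: you pass through $\phi(E_{ii})\leq\phi(I_n)$ and then bound a diagonal entry by the norm, while the paper writes $A_{ii}\leq\|\phi(E_{ii})\|\leq\|\phi(I_n)\|$; these are the same two observations in a different order.
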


\begin{proof}
    The map $\Delta: E_{ij}\mapsto E_{ij}\otimes E_{ij}$ induces a (non-unital) $\ast$-embedding of $M_n$ into $M_n\otimes M_n$. This implies that $\Delta(B) = \sum_{ij} B_{ij}E_{ij}\otimes E_{ij}$ is positive semidefinite for all $B\in M_n$ positive semidefinite. We see that $0\leq \tr((\Ch(\phi)\circ \Delta(B))\Delta(J_n)) = \tr(AB)$ for all $B\in M_n$ positive semidefinite; thus, $A$ is positive semidefinite. The second assertion follows since $A_{ii} = \phi(E_{ii})_{ii} \leq \|\phi(E_{ii})\|\leq \|\phi(I_n)\|$. \qedhere
\end{proof}


\begin{proof}[Proof of Proposition \ref{prop:cp-Lov\'asz-equal}]
    We begin by showing that $\Ind_{\cp}(M_n:\cc S_\G) =\vartheta(\G)$. Let $\phi: M_n\to \cc S_\G$ in $\cp_1(M_n)$ be such that $\psi := \phi - \id$ is completely positive. Setting
    \[A_{ij} = \phi(E_{ij})_{ij}\quad \textup{and}\quad B_{ij} = \psi(E_{ij})_{ij},\]
    by Lemma \ref{lem:choi-expectation} we have that $B = A - J_n\succeq 0$ and $A_{ii}\leq \|\phi(I_n)\|$; thus 
    \[\vartheta(\G)\leq \Ind_{\cp}(M_n:\cc S_\G).\] 
    
    The reverse inequality is obtained by considering the Schur multiplier $\de_A$ of any matrix $A\in \cc E_\G$ as in (\ref{eq:Lov\'asz-theta}). We have that $\de_A(I_n) = A_{11} I_n$, the image of $\de_A$ is contained in $\cc S_\G$, and that $\de_{A-J_n} = \de_A-\id$ is completely positive. It follows that
    \[\Ind_{\cp}(M_n: \cc S_\G) \leq \Ind_{\cp}(M_n:\cc E_\G) \leq \vartheta(\G).\]

    We now turn to the second assertion. For ease of notation, set $\cc S = \cc E_{\overline \G} =\bb CI_n + \cc S_\G^\perp$. Let $\phi:\cc S\to \bb CI_n$ be such that $\psi:=\phi-\id_{\cc S}: \cc S\to \cc S$ is completely positive. We may extend $\psi$ to a completely positive map $\tilde\psi: M_n\to M_n$, and define $\tilde\phi := \tilde\psi + \id_{M_n}$, noting that $\tilde\phi$ extends $\phi$. Define $A,B\in M_n$ by 
    \[A_{ij} := \tilde\phi(E_{ij})_{ij}\quad \textup{and}\quad B_{ij} := \tilde\psi(E_{ij})_{ij},\] 
    which are positive semidefinite by Lemma \ref{lem:choi-expectation}.
    We note that $A - J_n = B$, $A_{ii} = \la$ if $\phi(I_n) = \la I_n$, and that $A_{ij} = 0$ if $i\not\sim j$. It therefore follows from (\ref{eq:Lov\'asz-theta}) that $\vartheta(\G)\leq \tilde\la(\cc S)$.
    
    In the other direction, suppose that $A\in \cc E_\G$ is such that $A\succeq J_n$. Let $\de_A$ be the Schur multiplier associated to $A$, and define $\phi:M_n\to M_n$ by $\phi(E_{ij}) = \de_A(E_{ij}) = A_{ij}E_{ij}$ if $i\not= j$ and $\phi(E_{ii}) = A_{ii}I_n$ for all $i=1,\dotsc,n$. The complete positivity of $\phi - \id_{M_n}$ follows from that of $\de_A - \id_{M_n} = \de_{A - J_n}$, and we have that $\phi(\cc S)\subset \bb CI_n$. Thus $\vartheta(\G)\geq \tilde\la(\cc S)$.  \qedhere
    
\end{proof}

\begin{remark}
     It ought to follow by similar arguments that $\vartheta(\overline{\G}) = \Ind_{\cp}(\cc S_\G:D_n)$, though we leave this unresolved.
\end{remark}

\begin{remark}
    Given an inclusion $\cc T\subset \cc S\subset M_n$, one can define a relative version of the quantum Lov\'asz invariant by $\widetilde\vartheta(\cc S : \cc T) := \Ind_{\cp}(\cc S: \cc T)$. If $\La$ is a subgraph of a graph $\G$ on $n$ vertices, one obtains a relativized version of the Lov\'asz theta invariant as $\vartheta(\G : \La) = \Ind_{\cp}(\cc S_\G: \cc S_\La)$. By the proof of Proposition \ref{prop:cp-Lov\'asz-equal} have that $\vartheta(\G : \La)$ is still semidefinite programmable and that $\vartheta(\G:\La)\leq \vartheta(K_n:\La) = \vartheta(\La)$. One would expect strict inequality in some cases where $\cc S_\La$ is not injective as an operator system. It would be interesting if this could be used practically to give sharpened estimates on the chromatic number or other graph invariants.
\end{remark}

Duan, Severini, and Winter defined an invariant for matricial systems $\cc S\subset M_n$ known as the \emph{quantum Lov\'asz theta} $\tilde\vartheta(\cc S)$. We recall the following formulation for $\widetilde\vartheta(\cc S)$ as a complex semidefinite program which appears as \cite[Theorem 8]{Winter2013}:
\begin{equation}\label{eq:dsw-theta}
    \begin{aligned}
        &\text{maximize} && \tr\otimes\tr((I_n\otimes X + Y)\Delta_n)\\
        &\text{subject to} && \tr(X) =1\\
        & && Y\in (\cc S)^\perp\otimes M_n\\
        & && I_n\otimes X + Y\in (M_n\otimes M_n)^+.
    \end{aligned}
\end{equation}
From (\ref{eq:dsw-theta}) it is easy to see by a change of variables that the following program computes $\widetilde\vartheta(\cc S)^{-1}$:
\begin{equation}\label{eq:dsw-theta-inv}
    \begin{aligned}
        &\text{minimize} && \tr(X)\\
        &\text{subject to} && \tr\otimes\tr((I_n\otimes X + Y)\Delta_n)= 1\\
        & && Y\in (\cc S)^\perp\otimes M_n\\
        & && I_n\otimes X + Y\in (M_n\otimes M_n)^+.
    \end{aligned}
\end{equation}

Comparing this with (\ref{eq:index-dual}), we observe that the difference between $\Ind_{\cp}(M_n : \cc S)$ and $\widetilde\vartheta(\cc S)$ is that, all other things being equal, in the former we have that $Y\in M_n\otimes (\cc S)^\perp$ while in the latter $Y\in (\cc S)^\perp\otimes M_n$. The following proposition is then immediate from the proofs of Propositions \ref{prop:cp-ind-primal} and \ref{prop:dual-primal}.

\begin{prop}[cf.\ Theorem 9 in \cite{Winter2013}]
    The following semidefinite program computes $\vartheta(\cc S)^{-1}$:
    \begin{equation}\label{eq:theta-primal}
        \begin{aligned}
            &\textup{maximize} &&\la \\
            &\textup{subject to} &&\id\otimes\tr(X) = (1-\la)I_n\\
            & && X +\la\Delta_n\in M_n\otimes\cc S\\
            & && X\in (M_n\otimes M_n)^+.
        \end{aligned}
    \end{equation}
    Equivalently, $\widetilde\vartheta(\cc S)^{-1}$ is the maximal value $\la\geq 0$ so that there is a completely positive, trace-preserving map $\Phi: M_n\to \cc S$ so that $\Phi - \la\id_{M_n}$ is completely positive.
\end{prop}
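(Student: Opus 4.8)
The plan is to mirror exactly the argument already given for $\Ind_{\cp}(M_n:\cc S)$ in Propositions \ref{prop:cp-ind-primal} and \ref{prop:dual-primal}, making only the single structural change dictated by the placement of the tensor factor. Recall that the definition of $\widetilde\vartheta(\cc S)$ via the programs (\ref{eq:dsw-theta}) and (\ref{eq:dsw-theta-inv}) differs from the $\cp$-index program (\ref{eq:index-dual}) precisely in that the membership condition reads $Y\in (\cc S)^\perp\otimes M_n$ rather than $Y\in M_n\otimes (\cc S)^\perp$. Accordingly, the claimed primal program (\ref{eq:theta-primal}) differs from the $\cp$-index primal (\ref{eq:index-primal}) in exactly one place: the partial trace constraint becomes $\id\otimes\tr(X) = (1-\la)I_n$ in place of $\tr\otimes\id(X)=(1-\la)I_n$. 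So the whole content is to trace through which slot of $M_n\otimes M_n$ is being contracted and verify that the two substitutions are consistent with each other under the primal--dual correspondence.

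First I would redo the change of variables from the proof of Proposition \ref{prop:cp-ind-primal}: set $X = \Ch(\Phi - \la\id)$, so that $X+\la\Delta_n = \Ch(\Phi)$ and the positivity constraint $X\in (M_n\otimes M_n)^+$ says $\Phi-\la\id$ is completely positive. The membership $X+\la\Delta_n\in M_n\otimes\cc S$ says $\Ch(\Phi)\in M_n\otimes \cc S$, i.e.\ $\Phi(M_n)\subset \cc S$, exactly as before. The only new step is interpreting the first constraint: since $\Ch(\Phi) = \sum_{ij}E_{ij}\otimes\Phi(E_{ij})$, the partial trace $\id\otimes\tr$ applied to $\Ch(\Phi)$ yields $\sum_{ij}E_{ij}\,\tr(\Phi(E_{ij}))$, which equals $I_n$ precisely when $\tr\circ\Phi=\tr$, that is, when $\Phi$ is \emph{trace-preserving}. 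Thus $\id\otimes\tr(X)=(1-\la)I_n$ encodes that $\Phi$ is trace-preserving rather than unital; this is the essential difference from the $\cp$-index, and it matches the duality between the two slots. This immediately gives the final ``equivalently'' sentence: the program's optimal $\la$ is the largest $\la\geq 0$ admitting a completely positive trace-preserving $\Phi:M_n\to\cc S$ with $\Phi-\la\id_{M_n}$ completely positive.

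To confirm that (\ref{eq:theta-primal}) actually computes $\widetilde\vartheta(\cc S)^{-1}$, I would invoke that (\ref{eq:dsw-theta-inv}) is its dual by the same Lagrangian computation carried out in Proposition \ref{prop:dual-primal}, with the roles of the two tensor slots interchanged: choosing the orthonormal hermitian basis $G_1,\dots,G_m$ of $\cc S^\perp$ and forming the matrix $\tilde X\in(M_{n^2+1})^+$ as before, the constraint $([G_k]_{ij}F_{00}+E_{ij}\otimes G_k)$ now pairs against the slot carrying $M_n\otimes(\cc S)$, and the partial trace swap sends $Y\in M_n\otimes(\cc S)^\perp$ to $Y\in(\cc S)^\perp\otimes M_n$ in the dual; everything else in that computation is formally identical after the shuffle. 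Strong duality holds by the same Slater-type argument: the Choi matrix of $X\mapsto \tr(X)I_n-\la X$ is positive definite for small $\la>0$, furnishing a strictly feasible point. I expect no genuine obstacle here, since the statement is explicitly flagged as ``immediate from the proofs of Propositions \ref{prop:cp-ind-primal} and \ref{prop:dual-primal}''; the only point demanding care is bookkeeping the trace-preserving versus unital dichotomy and making sure the tensor slot is transposed consistently in both the primal objective and the dual feasibility region, so that the pair (\ref{eq:theta-primal})--(\ref{eq:dsw-theta-inv}) genuinely matches Duan--Severini--Winter's formulation in (\ref{eq:dsw-theta}).
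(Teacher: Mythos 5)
Your proposal is correct and follows exactly the route the paper intends: the paper itself gives no separate proof, declaring the proposition ``immediate from the proofs of Propositions \ref{prop:cp-ind-primal} and \ref{prop:dual-primal}'' once one observes that the only difference between (\ref{eq:index-dual}) and (\ref{eq:dsw-theta-inv}) is which tensor slot carries $(\cc S)^\perp$. Your fleshing out of the change of variables $X=\Ch(\Phi-\la\id)$, the identification of $\id\otimes\tr(X)=(1-\la)I_n$ with trace-preservation of $\Phi$, and the slot-swapped duality argument is precisely the bookkeeping the authors are alluding to.
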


For a completely positive map $\Phi: M_n\to M_n$, we may define a \emph{dual} map $\Phi^\dagger: M_n\to M_n$ by the functional equation
\begin{equation*}
    \tr(\Phi(X)Y) = \tr(X\Phi^\dagger(Y)),\quad X,Y\in M_n.
\end{equation*}
It is well known and not difficult to check that $\Phi$ is unital, completely positive if and only if $\Phi^\dagger$ is completely positive and trace preserving. From this we obtain the following corollary.

\begin{cor}
    We have that $\widetilde\vartheta(\cc S)^{-1}$ is the maximal value $\la\geq 0$ so that there is a unital, completely positive map $\Phi: M_n\to \cc S$ so that $\Phi(\cc S^{\perp}) = \{0\}$ and $\Phi - \la\id_{M_n}$ is completely positive.
\end{cor}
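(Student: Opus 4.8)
The plan is to derive this corollary from the preceding proposition purely by passing to Hilbert--Schmidt adjoints. That proposition already identifies $\widetilde\vartheta(\cc S)^{-1}$ as the largest $\la\geq 0$ for which there exists a completely positive, trace-preserving map $\Psi\colon M_n\to \cc S$ with $\Psi-\la\,\id_{M_n}$ completely positive. I would set $\Phi:=\Psi^\dagger$ and show that $\Psi\mapsto\Phi$ is a bijection carrying the feasible maps for a given $\la$ in that proposition onto exactly the maps $\Phi$ described in the corollary; matching the two feasibility conditions then matches the two optimal values of $\la$.

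First I would record the formal properties of $\dagger$: it is an involution, so $(\Psi^\dagger)^\dagger=\Psi$; it fixes the identity, since $\tr(\id_{M_n}(X)Y)=\tr(XY)=\tr(X\,\id_{M_n}(Y))$ gives $\id_{M_n}^\dagger=\id_{M_n}$; and it preserves complete positivity. Together with linearity this yields $(\Psi-\la\,\id_{M_n})^\dagger=\Phi-\la\,\id_{M_n}$, so $\Psi-\la\,\id_{M_n}$ is completely positive if and only if $\Phi-\la\,\id_{M_n}$ is. Next, applying the stated equivalence (that a map is unital and completely positive precisely when its dagger is completely positive and trace preserving) to the map $\Phi=\Psi^\dagger$, I get that $\Psi$ is completely positive and trace preserving exactly when $\Phi$ is unital and completely positive.

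The one genuinely new point is the translation of the range condition. Using the defining functional equation $\tr(\Psi(X)Y)=\tr(X\Phi(Y))$, the condition $\Psi(M_n)\subset\cc S$ is equivalent to $\tr(\Psi(X)Y)=0$ for all $X\in M_n$ and all $Y\in\cc S^\perp$; here one uses that $\cc S$ is self-adjoint, so that $\cc S^\perp$ is the same whether computed with the bilinear trace form $\tr(AB)$ implicit in the definition of $\dagger$ or the sesquilinear one, and that $(\cc S^\perp)^\perp=\cc S$ in finite dimensions. By the functional equation this reads $\tr(X\Phi(Y))=0$ for all $X\in M_n$ and $Y\in\cc S^\perp$, and nondegeneracy of the trace form forces $\Phi(Y)=0$ for every $Y\in\cc S^\perp$, i.e.\ $\Phi(\cc S^\perp)=\{0\}$. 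Each step here is reversible, so the equivalence holds in both directions.

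Combining the three equivalences, $\Psi$ is feasible at parameter $\la$ for the program of the preceding proposition if and only if $\Phi=\Psi^\dagger$ is unital and completely positive, annihilates $\cc S^\perp$, and satisfies $\Phi-\la\,\id_{M_n}$ completely positive. Since $\dagger$ is a bijection of the relevant space of maps, the suprema of admissible $\la$ over the two feasible sets coincide, which gives the claimed description of $\widetilde\vartheta(\cc S)^{-1}$. I expect the main (though still minor) obstacle to be the bookkeeping in the range-condition step, namely confirming that the annihilator $\cc S^\perp$ appearing in the dual programs matches the pairing implicit in the definition of $\dagger$; this is exactly where self-adjointness of $\cc S$ is used.
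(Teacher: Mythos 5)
Your overall strategy --- pass to Hilbert--Schmidt adjoints and match the feasible sets of the two optimization problems --- is exactly the route the paper intends: the corollary is presented as an immediate consequence of the observation that $\Phi$ is unital completely positive if and only if $\Phi^\dagger$ is completely positive and trace preserving, and your verification of the formal properties of $\dagger$ (involution, fixes $\id_{M_n}$, preserves complete positivity, hence commutes with subtracting $\la\,\id_{M_n}$) and of the range/kernel exchange $\Psi(M_n)\subset \cc S \iff \Psi^\dagger(\cc S^\perp)=\{0\}$ is correct and carefully done.

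There is, however, one point where your claimed bijection does not deliver the statement as printed. The corollary asks for a unital completely positive map $\Phi\colon M_n\to\cc S$ \emph{and} $\Phi(\cc S^\perp)=\{0\}$, i.e.\ it imposes both a range condition and a kernel condition on $\Phi$. Your dictionary converts the proposition's single range condition $\Psi(M_n)\subset\cc S$ into the single kernel condition $\Phi(\cc S^\perp)=\{0\}$, and your concluding sentence ("$\Phi=\Psi^\dagger$ is unital and completely positive, annihilates $\cc S^\perp$, and satisfies $\Phi-\la\,\id_{M_n}$ completely positive") silently drops the requirement $\Phi(M_n)\subset\cc S$. By the same computation you use for the kernel condition, $\Psi^\dagger(M_n)\subset\cc S$ holds if and only if $\Psi(\cc S^\perp)=\{0\}$, which is \emph{not} among the proposition's constraints; so $\dagger$ maps the proposition's feasible set onto a set that is in general strictly larger than the corollary's. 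Concretely, your argument establishes the inequality $\la\le\widetilde\vartheta(\cc S)^{-1}$ for every $\la$ feasible in the corollary (since $\Phi^\dagger$ is then completely positive, trace preserving, maps into $\cc S$, and has $\Phi^\dagger-\la\,\id_{M_n}$ completely positive), but for the reverse direction it only produces an optimal $\Phi$ with codomain $M_n$, not $\cc S$. Either the codomain in the statement should be read as $M_n$ with $\Phi(\cc S^\perp)=\{0\}$ the operative constraint --- which is what the paper's one-line derivation and its subsequent comparison with Proposition \ref{prop:co-index-2} (a kernel condition) suggest --- in which case your proof is complete, or else one must additionally argue that the optimum is attained by some $\Phi$ whose range also lies in $\cc S$; your argument as written does not supply that, and it does not follow from the $\dagger$ formalism alone.
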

    
It is interesting to compare this to the co-index of $\cc S^\perp$ in $M_n$ defined in Section \ref{sec:other-notions-index} below and to Proposition \ref{prop:co-index-2} in particular.

\begin{question}
    For an arbitrary matricial system $\cc S\subset M_n$, are there any equalities among $\Ind_{\cp}(M_n:\cc S)$, $\widetilde\vartheta(\cc S)$, and $\tilde\la(\bb C 1 + \cc S^\perp)$?
\end{question}

\subsection{Computations of indices of group generator systems} 
We begin by looking at operator systems in the full group $C^*$-algebra  $C^*(\bb F_n).$

For a graph $\G$ on $n$ vertices, let $\cc T_\G\subset \cc T_n\subset C^*(\bb F_n)$ be defined by:
\begin{equation*}
    \begin{aligned}
        \cc T_n &= {\rm span}\{u_i^*u_j : i,j=1,\dotsc,n\}\\
        \cc T_\G &={\rm span}\{u_i^*u_j : i\sim j\}
    \end{aligned}
\end{equation*}
where $u_1,\dotsc,u$ are the canonical unitary generators of $C^*(\bb F_n)$.

\begin{prop}
    We have that $\Ind_{\cp}(\cc T_n: \cc T_\G) = \vartheta(\G)$.
\end{prop}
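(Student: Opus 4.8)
The plan is to run the same two-sided argument used for Proposition \ref{prop:cp-Lov\'asz-equal}, but with Schur multipliers on $M_n$ replaced by Fourier (Herz--Schur) multipliers on $C^*(\bb F_n)$, and Lemma \ref{lem:choi-expectation} replaced by an extraction of Fourier coefficients against the canonical tracial state $\tau$ on $C^*(\bb F_n)$ (the trace with $\tau(u_g) = 0$ for $g \neq e$). Throughout I would identify a matrix $A \in \cc E_n$ (constant diagonal) with the candidate ``multiplier'' $u_i^* u_j \mapsto A_{ij}\, u_i^* u_j$; this is consistent on the diagonal precisely because $u_i^* u_i = 1$ for all $i$ while $A_{ii}$ is independent of $i$, and it sends $\cc T_n$ into $\cc T_\G$ exactly when $A \in \cc E_\G$.

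For $\Ind_{\cp}(\cc T_n : \cc T_\G) \le \vartheta(\G)$, I would begin from an optimizer $A \in \cc E_\G$ with $A \succeq J_n$ in the formula (\ref{eq:Lov\'asz-theta}) for $\vartheta(\G)$, so that $A_{11} = \vartheta(\G)$. Writing $A$ as a Gram matrix $A_{ij} = \langle b_i, b_j\rangle$ with $\|b_i\|^2 = A_{11}$ constant, I would fix a vector $\Omega$ of the same norm and unitaries $U_i$ with $U_i\Omega = b_i$; by freeness these extend to a representation $\pi$ of $\bb F_n$, and $m(g) := \langle \Omega, \pi(g)\Omega\rangle$ is a positive-definite function with $m(g_i^{-1}g_j) = A_{ij}$. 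The associated Fourier multiplier is completely positive on $C^*(\bb F_n)$ and restricts to a map $\Theta_A \in \CP_1(\cc T_n)$ with $\Theta_A(\cc T_n) \subset \cc T_\G$ and $\Theta_A(1) = A_{11}\,1$. Since $A - J_n \succeq 0$ also has constant diagonal, the same construction exhibits $\Theta_A - \id = \Theta_{A - J_n}$ as a completely positive multiplier; hence $\Theta_A \in \Lambda(\cc T_n, \cc T_\G)$ and $\Ind_{\cp}(\cc T_n : \cc T_\G) \le \|\Theta_A(1)\| = A_{11} = \vartheta(\G)$.

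For the reverse inequality, I would take any $\phi \in \Lambda(\cc T_n, \cc T_\G)$ and set $A_{ij} := \tau\big((u_i^* u_j)^* \phi(u_i^* u_j)\big)$. The element $[u_i^* u_j]_{ij} = (u_1, \dots, u_n)^*(u_1, \dots, u_n)$ is positive in $M_n(\cc T_n)$, so complete positivity of $\phi$ gives $[\phi(u_i^* u_j)]_{ij} \succeq 0$; conjugating by the diagonal unitary $\mathrm{diag}(u_1^*, \dots, u_n^*) \in M_n(C^*(\bb F_n))$ and applying the completely positive map $\id \otimes \tau$ yields exactly $A \succeq 0$, and repeating the computation with $\psi = \phi - \id$ in place of $\phi$ gives $A - J_n \succeq 0$, i.e. $A \succeq J_n$. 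Since $\phi(1) = \|\phi(1)\|\,1$ one reads off the constant diagonal $A_{ii} = \tau(\phi(1)) = \|\phi(1)\|$, and $A_{ij} = 0$ whenever $i \not\sim j$ because the Fourier coefficient of $\phi(u_i^* u_j) \in \cc T_\G$ at the nonidentity element $g_i^{-1}g_j$ vanishes (that element is not among the spanning group elements of $\cc T_\G$). Thus $A \in \cc E_\G$ with $A \succeq J_n$, so (\ref{eq:Lov\'asz-theta}) gives $\vartheta(\G) \le A_{11} = \|\phi(1)\|$, and taking the infimum yields $\vartheta(\G) \le \Ind_{\cp}(\cc T_n : \cc T_\G)$.

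I expect the main obstacle to be the upper bound: verifying that the candidate multiplier $\Theta_A$ and, crucially, its difference $\Theta_A - \id$ are genuinely completely positive on the operator system $\cc T_n$, which is precisely where positive-definiteness of $m$ (and hence the free-group structure) is essential. By contrast the lower bound should be a routine translation of the Choi-matrix computation of Lemma \ref{lem:choi-expectation}, with the diagonal-unitary conjugation and the trace $\tau$ performing the bookkeeping that the embedding $\Delta$ does there.
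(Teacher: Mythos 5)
Your argument is correct, and while it follows the same two-sided skeleton as the paper's proof, both halves rest on genuinely different technical inputs. For the upper bound the paper simply cites \cite[Theorem 2.6]{Farenick2012}, which says that $u_i^*u_j\mapsto A_{ij}\,u_i^*u_j$ is completely positive on $\cc T_n$ exactly when $A\succeq 0$ (with constant diagonal); your Gram-vector/positive-definite-function construction, using the universal property of $\bb F_n$ to build the representation $\pi$ and then compressing $u_g\mapsto \pi(g)\otimes u_g$ against $\Omega$, is in effect a self-contained proof of the direction of that theorem you need, applied to both $A$ and $A-J_n$. For the lower bound the paper dualizes ($\cc T_n^*\cong\cc E_n$ by \cite[Proposition 2.9]{Farenick2012}), extends $\phi^*-\id$ to a completely positive map on $M_n$ by Arveson, and then invokes Lemma \ref{lem:choi-expectation}; your extraction $A_{ij}=\tau\bigl((u_i^*u_j)^*\phi(u_i^*u_j)\bigr)$ bypasses duality and extension entirely, with the diagonal-unitary conjugation of $[\phi(u_i^*u_j)]_{ij}\succeq 0$ followed by $\id\otimes\tau$ playing the role of the $\Delta$-embedding in that lemma, and the free-group word computation showing $A_{ij}=0$ for $i\not\sim j$. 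The resulting $A$ lies in $\cc E_\G$ with $A\succeq J_n$, so (\ref{eq:Lov\'asz-theta}) gives $\vartheta(\G)\leq\|\phi(1)\|$ as you say. Your route buys a more self-contained and arguably more transparent argument (the trace does all the bookkeeping directly on $C^*(\bb F_n)$); the paper's route buys brevity by leaning on the Farenick--Paulsen duality machinery. Both are valid.
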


\begin{proof}  
    We first prove that $\Ind_{\cp}(\cc T_n :\cc T_\G)\leq \vartheta(\G)$. 
    It follows from \cite[Theorem 2.6]{Farenick2012} that, for a matrix $A\in \cc E_n$, the map $\phi_A:\cc T_n\to \cc T_n$ given by $\phi_A(u_i^*u_j) = A_{ij}\,u_i^*u_j$ if $i\not= j$ and $\phi_A(1) = A_{11}1$ is completely positive if and only if $A$ is positive semidefinite. Also note that $\phi_A(\cc T_n)\subset \cc T_\G$ if and only if $A\in \cc E_\G$. From this and (\ref{eq:Lov\'asz-theta}) it follows that $\Ind_{\cp}(\cc T_n : \cc T_\G)\leq \vartheta(\G)$.

 By \cite[Proposition 2.9]{Farenick2012} we have that $\cc T_n^*$ is canonically completely order isomorphic to $\cc E_n$; hence, by \cite[Proposition 1.15]{Farenick2012} we have that $\cc T_\G^*\cong \cc E_n/(\cc E_\G^\perp\cap \cc E_n)$. We have $\phi - \id: \cc T_n\to \cc T_n$ is completely positive if and only if $\phi^* - \id = (\phi - \id)^*: \cc E_n\to \cc E_n$ is. If $\phi(\cc T_n) \subset \cc T_\G$, then $\ker(\phi^*)\supset \cc E_\G^\perp \cap \cc E_n$. We can extend $\phi^* - \id$ to a completely positive map $\psi: M_n\to M_n$ so that $\tilde\psi := \psi + \id$ extends $\phi^*$. 
 
 Let $A$ be the matrix obtained from $\tilde\psi$ as in Lemma \ref{lem:choi-expectation}, and let $\phi_A$ be the associated Schur multiplier. From Lemma \ref{lem:choi-expectation} we see that $\|\phi_A(1)\| = \max_i A_{ii} \leq \|\tilde\psi(1)\|$ and that $\phi_A - \id$ is completely positive. In this way we have that $\vartheta(\G)\leq \Ind_{\cp}(\cc T_n : \cc T_\G)$. \qedhere
    
\end{proof}

\begin{example}
    Let $G$ be a countable discrete group and $S =S^{-1} \subset G$ a finite, symmetric generating set. Let $C_\la^*(G)$ denote the reduced group C$^*$-algebra of $G$, and let $\cc X_S\subset C_{\la}^*(G)$ be the operator system spanned by $\{\la(s) : s\in S\cup\{e\}\}$. 
\end{example}

Let $E: C_\la^*(G)\to \bb C1$ be the conditional expectation onto $\bb C1$. Let $x\in \cc X_S\setminus\{0\}$ be self-adjoint and supported on $S\setminus\{e\}$, i.e., $E(x)=0$. We write
\[\rho(x) := \max\left\{\frac{a}{b}, \frac{b}{a}\right\}\]
where $a,b>0$ are the smallest constants such that $a1 \succeq x\succeq -b1$. We define
\[\rho(S) = \max\{\rho(x) : x^* = x\in \cc X_S\setminus\{0\},\ E(x)=0\}.\]

\begin{prop}\label{prop:ind-symmetric-generating}
    We have that
    \begin{equation*}
        \Ind(\cc X_S:\bb C1)\leq 1+\rho(S)
    \end{equation*}
\end{prop}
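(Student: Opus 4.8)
The plan is to realize the bound with a single explicit test map, namely a suitable scalar multiple of the conditional expectation $E$. Assuming $\rho(S)<\infty$ (otherwise there is nothing to prove), set $c := 1+\rho(S)$ and define $\phi := c\,E|_{\cc X_S}$, so that $\phi(x) = c\,E(x)$ for $x\in\cc X_S$. Since $E$ is unital completely positive and $E(\bb C1)=\bb C1$, the map $\phi$ lies in $\cp_1(\cc X_S)$, satisfies $\phi(\cc X_S)\subset\bb C1$, and has $\|\phi(1)\| = \|c\,1\| = c$. Hence $\phi$ is feasible for the infimum defining $\Ind(\cc X_S:\bb C1)$ as soon as $\phi\succ\succ\id$, i.e.\ as soon as $\phi-\id$ is positive as a map $\cc X_S\to\cc X_S$; granting this, I conclude $\Ind(\cc X_S:\bb C1)\le\|\phi(1)\| = 1+\rho(S)$. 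Thus the entire content reduces to verifying positivity of $\phi-\id$.

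To check this, I would take an arbitrary $x\in\cc X_S$ with $x\succeq 0$ and decompose it as $x = \alpha1 + x_0$, where $E(x)=\alpha1$ with $\alpha\ge 0$ and $E(x_0)=0$. Then $x_0$ is self-adjoint and
\[(\phi-\id)(x) = c\,\alpha1 - (\alpha1+x_0) = (c-1)\,\alpha1 - x_0 = \rho(S)\,\alpha1 - x_0,\]
so it suffices to show $\rho(S)\,\alpha1\succeq x_0$. If $x_0=0$ this is immediate from $\alpha\ge 0$; otherwise, since $E$ is the faithful canonical trace, $x_0\ne 0$ with $E(x_0)=0$ forces $x_0$ to have spectrum on both sides of $0$, so the smallest constants $a,b$ with $a1\succeq x_0\succeq -b1$ are both strictly positive.

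The key observation is that positivity of $x$ itself forces $\alpha$ to dominate $b$: from $\alpha1 + x_0 = x\succeq 0$ we get $x_0\succeq -\alpha1$, whence $\alpha\ge b$ by minimality of $b$. Combining this with the ratio estimate $a/b\le\rho(x_0)\le\rho(S)$, which holds by the definitions of $\rho(x_0)$ and $\rho(S)$, gives
\[\rho(S)\,\alpha \ge \rho(S)\,b \ge \tfrac{a}{b}\cdot b = a,\]
so $\rho(S)\,\alpha1\succeq a1\succeq x_0$, as required. This establishes $\phi-\id\succ\succ 0$ and completes the argument. I do not expect a serious obstacle here: the only real idea is recognizing that the scaled conditional expectation is the correct test map, together with the observation that the order-unit component $\alpha=E(x)$ of a positive element automatically dominates its negative deviation $b$, so the single scalar $\rho(S)$ that bounds the ratio $a/b$ simultaneously controls the upper deviation $a$. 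The minor points needing care are the degenerate case $x_0=0$ and the trivial case $\rho(S)=\infty$, both dispatched above; the rest is elementary spectral bookkeeping in $C_\lambda^*(G)$.
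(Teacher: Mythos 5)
Your proof is correct and follows essentially the same route as the paper: both use the (scaled) canonical conditional expectation $E$ as the test map and reduce positivity of $(1+\rho(S))E-\id$ to the spectral bounds $a,b$ of the trace-zero part of a positive element, with your observation $\alpha\ge b$ playing exactly the role of the paper's identification of the minimal $c$ making $c1+x$ positive. The only difference is organizational — you fix the scalar $1+\rho(S)$ up front, whereas the paper solves for the optimal $\mu_0$ with $E-\mu_0\id\succ\succ 0$ — and your write-up is, if anything, the cleaner of the two.
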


\begin{proof}
     We have that $\Ind(\cc X_S : \bb C 1)= \mu_\ast^{-1}$ where 
    \begin{equation*}
        \begin{split}
            \mu_\ast &= \max\{\mu\geq 0 : \phi\in \ucp(\cc X_S),\ \phi(\cc X_S)=\bb C1,\ \phi - \mu\id\succ\succ 0\}\\
            &\geq \max\{\mu\geq 0 : E - \mu\id\succ\succ 0\} =: \mu_0.
        \end{split}
    \end{equation*}
    Since every positive element in $\cc X_S$ is of the form $c1 + x$ with $c\geq 0$, $x^* = x$, and $E(x)=0$, in order to check positivity of the map $E - \mu_0\id$ it suffices to consider the only the case when $c$ is minimal so that $c1 + x$ is positive and $x\not= 0$. For a fixed self-adjoint $x$ with $E(x)=0$, consider $a1 - x, b1 + x\in \cc X_S^+$ with $a,b$ minimal. We have that
    \begin{equation}\label{eq:E-la}
        \begin{split}
            &E(a1 - x) -\mu_0(a1 - x) = (1-\mu_0)a1 + \mu_0x\succeq 0\\
            &E(b1 + x) - \mu_0(b1 + x) = (1-\mu_0)b1 - \mu_0x\succeq 0.
        \end{split}
    \end{equation}
    
    From the first line of (\ref{eq:E-la}) we have that $(1-\mu_0)a1\succeq -\mu_0x$ from which it follows from assumptions holds if and only if $(1-\mu_0)a \geq \mu_0b$. Similarly, $(1-\mu_0)b\geq \mu_0a$ if and only if $(1-\mu_0)b1\succeq \mu_0x$ follows from the second line of (\ref{eq:E-la}) and standing assumptions. In this way we see that 
    \[\mu_0\leq \min\left\{\frac{a}{a+b}, \frac{b}{a+b}\right\}\]
    with equality obtained by minimizing over all self-adjoint contractions $x\in \cc X_S$ with $E(x)=0$. Thus $\mu_\ast^{-1}\leq \mu_0^{-1} = 1 + \rho(S)$. \qedhere
\end{proof}

\begin{remark} 
    For a state $\phi:\cc X\to \bb C$, let $R(\phi) := \{x\succeq 0: \phi(x)\leq 1\}$. Define
    \begin{equation*}
        \rho(\cc X,\phi) := \sup\{\|x\| : x\in R(\phi)\}.
    \end{equation*}
    By essentially the same considerations as the previous result we have that 
        \[\Ind(\cc X:\bb C1) = \inf_\phi \rho(\cc X,\phi)\]
    where $\phi$ ranges over all states $\phi:\cc X\to \bb C$.
\end{remark}

\begin{prop}\label{prop:cp-hoffman}
    We have that $\tilde\la(\cc X_S)\geq 1 + \rho(S)$.
\end{prop}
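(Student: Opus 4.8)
The plan is to bound $\|\varphi(1)\|$ from below for every $\varphi$ in the feasible set defining $\tilde\la(\cc X_S)=\Ind_{\cp}(\cc X_S:\bb C1)$. Since such a $\varphi$ satisfies $\varphi(\cc X_S)\subseteq\bb C1$, I would write $\varphi(y)=\omega(y)1$; complete positivity of $\varphi$ forces $\omega$ to be a positive functional, and $\|\varphi(1)\|=\omega(1)$. Writing $\tau$ for the canonical trace on $C_\la^*(G)$ (so that $E=\tau(\cdot)1$ and $\tau(y)=\langle\delta_e,y\delta_e\rangle$) and representing $\omega$ by its density, $\omega(y)=\tau(hy)$ with $h=h^\ast\in\cc X_S$, the quantity to estimate becomes $\omega(1)=\tau(h)$. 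The first step is to pass to the trace-adjoint: as recalled before the final corollary of Section~\ref{sec:cp-matricial}, the map $\varphi-\id$ is completely positive if and only if $\Theta(z):=\tau(z)h-z$ is completely positive. This reduces the proposition to proving $\tau(h)\ge 1+\rho(S)$ whenever $\Theta$ is completely positive.

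Next I would extract the elementary (merely positive) information. Let $x$ realize $\rho(S)$, with $a1\succeq x\succeq -b1$ minimal and, say, $a\ge b$, so that $\rho(S)=a/b$. Feeding the positive elements $a1-x$ and $x+b1$ into $\Theta$ and using $\tau(x)=0$, positivity of $\Theta$ gives $ah\succeq a1-x$ and $bh\succeq x+b1$, that is $h\succeq\max\{1-a^{-1}x,\ 1+b^{-1}x\}$. The spectrum of the right-hand side runs up to $1+a/b$, attained at the top of the spectrum of $x$, so $\|h\|\ge 1+\rho(S)$. This already reproduces the constant of Proposition~\ref{prop:ind-symmetric-generating} from the opposite side, but it controls only the operator norm of $h$, whereas the index is governed by $\tau(h)$.

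The crux is to upgrade $\|h\|\ge 1+\rho(S)$ to $\tau(h)\ge 1+\rho(S)$, and this is exactly where complete positivity must be used in an essential way. Mere positivity is insufficient: replacing $h$ by $\|h\|1$ preserves complete positivity of $\Theta$ while only increasing the trace, so the minimizer is genuinely non-scalar and one cannot reduce $\tau(h)$ to $\|h\|$. In the abelian case the translation-homogeneity of $\hat G$ promotes the spectral bound to $h\succeq(1+\rho(S))1$, giving the result at once; in general I would instead test complete positivity of $\Theta$ against a positive matrix $[z_{kl}]\in M_m(\cc X_S)^+$ assembled from the extreme spectral subspaces of $x$, designed so that the resulting inequality $\sum_{k,l}\langle\zeta_k,\Theta(z_{kl})\zeta_l\rangle\ge 0$ isolates the value $\tau(h)=\langle\delta_e,h\delta_e\rangle$ on one side and a quantity bounded above by $1+\rho(S)$ on the other. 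This is the operator-system analogue of pairing the Choi matrix with the maximally entangled vector, as in the computation of $\tilde\la(M_n)$ in Proposition~\ref{prop:trace}, and of the classical fact that the Lov\'asz theta function dominates the Hoffman eigenvalue bound. The main obstacle is precisely the construction and verification of this witness: manufacturing a genuinely positive matrix over $\cc X_S$ with the right trace profile out of the single extremal element $x$ is what makes the completely positive estimate strictly stronger than the positive one.
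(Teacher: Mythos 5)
There is a genuine gap, and it sits exactly where you locate it yourself: everything up to ``the crux'' is setup, and the crux is left as a plan rather than a proof. Granting your reductions, what must be shown is that $\omega(1)=\tau(h)\geq 1+\rho(S)$, i.e.\ that the scalar attached to an \emph{arbitrary} feasible $\varphi$ already obeys the bound that mere positivity yields only for the conditional expectation $E$ itself (Proposition \ref{prop:ind-symmetric-generating}). Your elementary analysis reaches only $\|h\|\geq 1+\rho(S)$, and the witness $[z_{kl}]\in M_m(\cc X_S)^+$ that is supposed to convert this into a bound on $\tau(h)$ is neither constructed nor verified; since this conversion is the entire content of the proposition, the proposal does not prove the statement. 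The paper closes the gap not with an entangled witness but with Haagerup's comultiplication trick: for any completely positive $\psi:\cc X_S\to\cc X_S$ with coefficients $\psi(\la(g))=\sum_{k\in S\cup\{e\}}\psi(g,k)\la(k)$, the diagonal multiplier $m_\psi(\la(g))=\psi(g,g)\la(g)$ is again completely positive, because $m_\psi=\Delta_S^{-1}\circ E_\Delta\circ(\id\otimes\psi)\circ\Delta_S$, where $\Delta(\la(g))=\la(g)\otimes\la(g)$ is the comultiplication and $E_\Delta$ is the conditional expectation onto $\Delta(C_\la^*(G))$. Applied to $\psi=\varphi-\id$ this shows that $\omega(1)E-\id$ is completely positive, hence positive, and the scalar computation from the proof of Proposition \ref{prop:ind-symmetric-generating} then gives $\omega(1)\geq 1+\rho(S)$. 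This symmetrization, reducing a general feasible map to the canonical one, plays the role that unitary averaging plays in Proposition \ref{prop:trace}; it is precisely the ingredient your sketch is missing.

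A secondary problem: the passage to the ``trace-adjoint'' $\Theta(z)=\tau(z)h-z$ is not justified. The equivalence between complete positivity of $\Phi$ and of $\Phi^\dagger$ recalled at the end of Section \ref{sec:cp-matricial} is stated for maps on the full matrix algebra $M_n$, where the trace pairing implements a complete order isomorphism $M_n^*\cong M_n$. For the subsystem $\cc X_S\subset C_\la^*(G)$ the trace pairing identifies $\cc X_S^*$ with $\cc X_S$ only as a vector space, not as a matrix-ordered space (compare $\cc T_n^*\cong\cc E_n$ in Section \ref{sec:examples}), so positivity of $\Theta$ on $\cc X_S$ --- which you use to derive $ah\succeq a1-x$ and $bh\succeq b1+x$ --- does not follow from complete positivity of $\varphi-\id$. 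Even the intermediate estimate $\|h\|\geq 1+\rho(S)$ would therefore need a separate argument.
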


\begin{proof}
    The argument here is essentially due to Haagerup \cite[Lemma 2.5]{Haagerup}. Let $\phi: \cc X_S\to \cc X_S$ be a completely positive map. We may describe $\phi$ in terms of the ``matrix coefficients'' $\phi(g,h)$ where
    \begin{equation}
        \phi(\la(g)) = \sum_{h\in S} \phi(g,h) \la(h).
    \end{equation}
    We claim that the multiplier map $m_\phi(\la(g)) := \phi(g,g) \la(g)$ is completely positive.
    
    Indeed, consider the comultiplication map \[\Delta: C_\la^*(G)\to C_\la^*(G\times G)\cong C_\la^*(G)\otimes_{\min} C_\la^*(G)\] given by $\Delta(\la(g)) = \la(g)\otimes\la(g)$. Since $\Delta$ is an injective $\ast$-homomorphism, we have that the restriction $\Delta_S: \cc X_S\to \cc X_S\otimes_{\min} \cc X_S$ is a complete order embedding. Moreover, since $\Delta(G)$ is a subgroup of $G\times G$, there is a conditional expectation $E_{\Delta}: C_\la^*(G\times G)\to \Delta(C_\la^*(G))$. It can now be checked that $m_\phi = \Delta_S^{-1}\circ E_\Delta\circ(\id\otimes\phi)\circ\Delta_S$; hence, $m_\phi$ is completely positive.
    
    Now, let $\phi:\cc X_S\to \bb C1$ be such that $\psi :=\phi - \id:\cc X_S\to \cc X_S$ is completely positive. It follows from the previous paragraph that $m_\psi = m_\phi - m_{\id} = \phi(e,e) E - \id$ is completely positive; thus, 
    \begin{equation}
        \tilde\la(\cc X_S) = \inf\{\la : \la\cdot E - \id\in \cp(\cc X_S)\}\geq \inf\{\la : \la\cdot E \succ\succ \id\} = 1 + \rho(S),
    \end{equation}
    where the final equality is given in the proof of Proposition \ref{prop:ind-symmetric-generating}. \qedhere
\end{proof}

\begin{question}
    Does $\tilde{\la}(\cc X_S) = \Ind(\cc X_S:\bb C1) = 1 + \rho(S)$?
\end{question}

\begin{remark}
    Generalizing \cite[Theorem 6]{Lovasz1979}, for a matricial system $\cc S\subset M_n$ the Lov\'asz theta invariant $\vartheta(\cc S)$ is defined in \cite[Section IV]{Winter2013} as
    \begin{equation*}
        \vartheta(\cc S) = \max\left\{1 + \frac{\la_{\max}(x)}{|\la_{\min}(x)|} : x=x^*\in \cc S^\perp\right\}.
    \end{equation*}
    Thus, Propositions \ref{prop:ind-symmetric-generating} and \ref{prop:cp-hoffman} provide another intriguing connection between the Lov\'asz theta invariant and the index. Conceivably, $\tilde{\la}(\cc X)$ could provide a generalization of a Hoffman-type bound \cite{Hoffman1970, Ganesan} to some notion of a ``quantum chromatic number'' \cite{PaulsenTodorov2015} of the operator system $\cc X$.
\end{remark}

In light of this connection it would be interesting to know whether $\tilde\la(\cc X_{S^n})\leq \tilde\la(\cc X_S)^n$ for all $n\in \bb N$, where $S^n$ is the $n$-fold product of $S$. We would then have that
\[\tilde\alpha(S) := \lim_{n\to \infty} \tilde\la(\cc X_{S^n})^{1/n}\] would exist and should serve as some analog for the Shannon capacity, as in \cite{Lovasz1979}, for the Cayley graph of $G$ induced by $S$.

\section{Other Notions of Index}\label{sec:other-notions-index}

\subsection{The Co-index}

\begin{defn}
Given an operator system $\cc X$ and a kernel $\cc J \subset \cc X$ then we define the \emph{Co-index} of the quotient operator system $\cc X/ \cc J$ in $\cc X$ to be 
\begin{align*}
    \coind(\cc X: \cc J):= \Ind_{*}(\cc X^*: (\cc X/ \cc J)^*),
\end{align*} where 
\begin{align*}
    \Ind_{*}(\cc X^*: (\cc X/ \cc J)^*):= \inf \{ \cbnorm{\vp}: \vp \in \CP(\cc X^*; (\cc X/ \cc J)^*), \vp - \id_{\cc X^*} \in \CP(\cc X^*)\}.
\end{align*}
\end{defn}

\begin{lem}
Let $\al$ be an operator system tensor product. Then the dual tensor product $\al^*$ is also an operator system tensor product relative to finite-dimensional operator systems. 
\end{lem}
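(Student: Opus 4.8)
The plan is to work with the natural definition of the dual tensor product: for finite-dimensional operator systems $\cc X,\cc Y$ set $\cc X\otimes_{\alpha^*}\cc Y := (\cc X^*\otimes_\alpha \cc Y^*)^*$, where $\cc X^*,\cc Y^*$ carry their (finite-dimensional) dual operator system structures, and where the canonical identifications $\cc X^{**}\cong\cc X$ and $(\cc X^*\otimes\cc Y^*)^*\cong\cc X\otimes\cc Y$ let us regard the underlying space as $\cc X\otimes\cc Y$. The candidate Archimedean matrix order unit is $1_X\otimes 1_Y$. Since $\cc X^*\otimes_\alpha\cc Y^*$ is a finite-dimensional operator system, its dual is again an operator system by \cite[Corollary 4.5]{Choi1977}; thus verifying condition (1) in the definition of an operator system structure reduces to checking that $1_X\otimes 1_Y$ is a \emph{faithful state} on $\cc X^*\otimes_\alpha\cc Y^*$, so that it is exactly the order unit singling out this dual operator system.

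To see that $1_X\otimes 1_Y$ is a faithful state, first note that evaluation at $1_X$ is a faithful state on $\cc X^*$: for positive $f\in(\cc X^*)^+$ one has $f(1_X)=\|f\|$, which vanishes only when $f=0$, and likewise for $1_Y$. Regarding these evaluations as unital completely positive maps $\cc X^*\to\bb C$ and $\cc Y^*\to\bb C$, condition (3) in the definition of the operator system structure $\alpha$ shows that $1_X\otimes 1_Y:\cc X^*\otimes_\alpha\cc Y^*\to\bb C$ is unital and completely positive, hence a state. Faithfulness then follows from the inclusion of the $\alpha$-cone of $\cc X^*\otimes_\alpha\cc Y^*$ in the cone of $\cc X^*\otimes_{\min}\cc Y^*$, together with faithfulness of the product state $1_X\otimes 1_Y$ on the minimal (spatial) tensor product: a positive element annihilated by a state that is faithful on the larger minimal cone is zero already as an element of that larger cone.

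Next I would record the sandwiching $\max\le\alpha^*\le\min$ at the level of matrix cones, which makes the remaining axioms automatic. By the extremality of the minimal and maximal tensor products \cite{KPTT2011}, for all $k$ one has $M_k(\cc X^*\otimes_{\max}\cc Y^*)^+\subseteq M_k(\cc X^*\otimes_\alpha\cc Y^*)^+\subseteq M_k(\cc X^*\otimes_{\min}\cc Y^*)^+$. Dualizing reverses inclusions of cones, and by the finite-dimensional duality identities $(\cc X^*\otimes_{\min}\cc Y^*)^*\cong\cc X\otimes_{\max}\cc Y$ and $(\cc X^*\otimes_{\max}\cc Y^*)^*\cong\cc X\otimes_{\min}\cc Y$ from \cite{KPTT2011}, this yields $M_k(\cc X\otimes_{\max}\cc Y)^+\subseteq M_k(\cc X\otimes_{\alpha^*}\cc Y)^+\subseteq M_k(\cc X\otimes_{\min}\cc Y)^+$.

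From the left-hand inclusion, condition (2) is immediate: the maximal cone already contains $\cc C_n\otimes\cc D_m$ by construction, so the same holds for the larger $\alpha^*$-cone. From the right-hand inclusion, condition (3) follows: given unital completely positive $u:\cc X\to M_n$ and $v:\cc Y\to M_m$, the map $u\otimes v$ is unital completely positive on $\cc X\otimes_{\min}\cc Y$, and since every $\alpha^*$-positive element is $\min$-positive, $u\otimes v$ carries $\alpha^*$-positive elements to positive elements; it is unital because $u\otimes v(1_X\otimes 1_Y)=I_n\otimes I_m$. I expect the main obstacle to be the bookkeeping of the first two paragraphs — establishing that $1_X\otimes 1_Y$ is genuinely a faithful state and that the bidual and dual-tensor identifications are mutually compatible — since once $\alpha^*$ is known to be an operator system lying between $\min$ and $\max$, conditions (2) and (3) are formal.
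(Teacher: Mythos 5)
Your proof is correct, but for axioms (2) and (3) it takes a genuinely different and heavier route than the paper. The paper's argument is a direct dualization of the axioms for $\al$ themselves: a positive $x\in M_n(\cc X)^+=M_n(\cc X^{**})^+$ \emph{is} a completely positive map $\cc X^*\to M_n$, so axiom (3) for $\al$ yields $x\otimes y\in M_{mn}\bigl((\cc X^*\otimes_\al\cc Y^*)^*\bigr)^+$, which is axiom (2) for $\al^*$; symmetrically, a unital completely positive $u:\cc X\to M_n$ is an element of $M_n(\cc X^*)^+$, so axiom (2) for $\al$ yields $u\otimes v\in M_{mn}\bigl((\cc X\otimes_{\al^*}\cc Y)^*\bigr)^+$, which is axiom (3) for $\al^*$. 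This uses nothing beyond the definition of the dual matrix ordering, and the two axioms simply swap under duality. Your sandwich argument instead invokes the extremality of $\min$ and $\max$ among operator system tensor products together with the finite-dimensional duality identities $(\cc X^*\otimes_{\min}\cc Y^*)^*\cong\cc X\otimes_{\max}\cc Y$ and $(\cc X^*\otimes_{\max}\cc Y^*)^*\cong\cc X\otimes_{\min}\cc Y$; these are true (see Farenick--Paulsen \cite{Farenick2012}) but are considerably stronger tools than the lemma needs. What your approach buys is a more careful treatment of axiom (1): the paper only cites that the dual of a finite-dimensional operator system is an operator system, without verifying that $1_X\otimes 1_Y$ is a faithful state and hence an admissible Archimedean matrix order unit, whereas you address this head on. The one assertion you leave unproved --- faithfulness of the product of faithful states on the minimal tensor product --- does deserve a line: if $z\in(\cc X^*\otimes_{\min}\cc Y^*)^+$ is killed by the product state, the slice $(\id\otimes\widehat{1_Y})(z)$ is positive (functoriality of $\min$) and is killed by $\widehat{1_X}$, hence vanishes; then for any unital complete order embedding $\phi:\cc X^*\to M_N$ the element $(\phi\otimes\id)(z)\in M_N(\cc Y^*)^+$ has diagonal entries killed by $\widehat{1_Y}$, so it is a positive matrix with zero diagonal and therefore zero, giving $z=0$.
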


\begin{proof}
First note $\al^*(\cc X, \cc Y):= (\cc X^* \otimes_\al \cc Y^*)^*$ is an operator system, being the Banach dual of a finite-dimensional operator system. Suppose $x \in M_n(\cc X)^+, y \in M_m(\cc Y)^+.$ By identifying $\cc X \simeq \cc X^{**}$ and $\cc Y \simeq \cc Y^{**}$ it follows $x \in M_n(\bidual{\cc X})^+$ and $y \in M_m(\bidual{\cc Y})^+$. This implies the induced maps \begin{align*}
    x: \cc X^* \to M_n, \quad \text{and}\quad y: \cc Y^* \to M_m,
\end{align*} are both completely positive. Since $\al$ is an operator system tensor product it follows the induced linear map \[
x \otimes y: \cc X^* \otimes_\al \cc Y^* \to M_{mn},
\] is completely positive. In particular, $x \otimes y \in M_{mn}((\cc X^* \otimes_\al \cc Y^*)^*)^+ = M_{mn}(\cc X \otimes_{\al^*} \cc Y)^+.$ This proves the second axiom for tensor products in \text{OpSys}. In order to check matrix functoriality, let $u: \cc X \to M_n$ and $v: \cc Y \to M_m$ both be unital completely positive. It then follows since $u \in M_n(\cc X^*)^+$ and $v \in M_m(\cc Y^*)^+$ that $u \otimes v \in M_{mn}(\cc X^* \otimes_{\al} \cc Y^*)^+.$ Embedding into the corresponding bidual operator system we have 
\[u \otimes v \in M_{mn}(\bidual{(\cc X^* \otimes_{\al} \cc Y^*)})^+ = M_{mn}((\cc X \otimes_{\al^*}\cc Y)^*)^+.
\] We conclude the linear map $u \otimes v: \cc X \otimes_{\al^*} \cc Y \to M_{mn}$ is unital completely positive. This proves $\al^*$ satisfies all three axioms and is therefore a tensor product in \text{OpSys}. \qedhere
\end{proof}

\begin{remark}\label{remark: dual tensor product is functorial relative to completely positive maps}
Consider finite-dimensional operator systems $\cc X, \cc Y, \cc S,\cc T$ and let $\al$ be an operator system tensor product which is functorial relative to completely positive maps. Given completely positive maps $u: \cc X \to \cc S$ and $v: \cc Y \to \cc T$ it needs to be shown that they induce a completely positive map on the dual tensor product $\al^*$. This is proven in the following way: first consider the dual maps $u^*: \cc S^* \to \cc X^*$ and $v^*: \cc T^* \to \cc Y^*$, which are necessarily completely positive. Functoriality of $\al$ implies the induced linear map $u^* \otimes v^*: \cc S^* \otimes_{\al} \cc T^* \to \cc X^* \otimes_{\al} \cc Y^*$ is completely positive. Taking the dual of this map yields a completely positive map $(u^* \otimes v^*)^*: (\cc X^* \otimes_{\al} \cc Y^*)^* \to (\cc S^* \otimes_{\al} \cc T^*)^*$. Making the identifications $\cc X \simeq \bidual{\cc X}, \cc Y \simeq \bidual{\cc Y}$ and observing $(u^* \otimes v^*)^* = u \otimes v$ under this identification, implies we have a completely positive map \[
u \otimes v: \cc X \otimes_{\al^*} \cc Y \to \cc S \otimes_{\al^*} \cc T.
\]
\end{remark}

\begin{prop}
    Let $\cc X$ and $\cc Y$ be two finite-dimensional operator systems and let $u: \cc X \to u(\cc X)$ and $v: \cc Y \to v(\cc Y)$ be two complete quotient maps. If $\al$ is a projective operator system tensor product which is functorial relative to completely positive maps, then it follows \begin{align*}
        \coind(\cc X \otimes_{\al} \cc Y: \cc N(u \otimes v)) \leq \coind(\cc X: \cc N(u)) \coind(\cc Y: \cc N(v)). 
    \end{align*} 
    
\end{prop}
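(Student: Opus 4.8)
The plan is to pass to duals and thereby recast the statement as a submultiplicativity inequality for $\Ind_*$ relative to the dual tensor product $\al^*$, which can then be proved by transcribing the argument of Proposition~\ref{prop: CP-index is submultiplicative}. Since $\al$ is functorial relative to completely positive maps, $u\otimes v$ is completely positive, so $\cc N(u\otimes v)$ is a kernel and the co-index on the left is, by definition, $\Ind_*\bigl((\cc X\otimes_\al\cc Y)^*:((\cc X\otimes_\al\cc Y)/\cc N(u\otimes v))^*\bigr)$. Using finite-dimensional reflexivity together with the defining relation $\cc X\otimes_{\al^*}\cc Y=(\cc X^*\otimes_\al\cc Y^*)^*$, I would first identify the ambient dual $(\cc X\otimes_\al\cc Y)^*=\cc X^*\otimes_{\al^*}\cc Y^*$.

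Writing $\cc A:=(\cc X/\cc N(u))^*=\cc N(u)^\perp\subset\cc X^*$ and $\cc B:=\cc N(v)^\perp\subset\cc Y^*$, the quotient--annihilator duality for finite-dimensional operator systems gives $((\cc X\otimes_\al\cc Y)/\cc N(u\otimes v))^*=\cc N(u\otimes v)^\perp$ with its relative structure, and the identity $\cc N(u\otimes v)=\cc N(u)\otimes\cc Y+\cc X\otimes\cc N(v)$ yields $\cc N(u\otimes v)^\perp=\cc A\otimes\cc B$ at the level of vector spaces. Because $\al$ is projective --- equivalently $\al^*$ is injective --- this relatively structured subsystem is precisely $\cc A\otimes_{\al^*}\cc B$. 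The proposition is thus reduced to showing $\Ind_*(\cc X^*\otimes_{\al^*}\cc Y^*:\cc A\otimes_{\al^*}\cc B)\le\Ind_*(\cc X^*:\cc A)\,\Ind_*(\cc Y^*:\cc B)$.

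For the reduced inequality I would take feasible maps $\vp_X$ and $\vp_Y$ for $\Ind_*(\cc X^*:\cc A)$ and $\Ind_*(\cc Y^*:\cc B)$ and form $\vp_X\otimes\vp_Y$. By Remark~\ref{remark: dual tensor product is functorial relative to completely positive maps} this is completely positive, it maps into $\cc A\otimes_{\al^*}\cc B$ by linearity, and the telescoping identity $\vp_X\otimes\vp_Y-\id=\vp_X\otimes(\vp_Y-\id)+(\vp_X-\id)\otimes\id$ together with the same functoriality shows $\vp_X\otimes\vp_Y-\id\in\CP$; hence $\vp_X\otimes\vp_Y$ is feasible. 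Since all three maps are completely positive, the identity $\cbnorm{\cdot}=\norm{\cdot(1)}$ applies, and using $0\le\vp_X(1)\otimes\vp_Y(1)\le\norm{\vp_X(1)}\,\norm{\vp_Y(1)}\,(1\otimes1)$ I obtain $\cbnorm{\vp_X\otimes\vp_Y}=\norm{\vp_X(1)\otimes\vp_Y(1)}\le\cbnorm{\vp_X}\cbnorm{\vp_Y}$; taking infima finishes the proof. I expect the main obstacle to be the duality bookkeeping of the second paragraph, namely confirming that the dual of the quotient of $\cc X\otimes_\al\cc Y$ is the injective (relative) tensor product $\cc A\otimes_{\al^*}\cc B$ --- this is exactly where projectivity of $\al$ is needed --- whereas the feasibility and norm estimates are routine analogues of Proposition~\ref{prop: CP-index is submultiplicative}.
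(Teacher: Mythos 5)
Your proposal is correct and follows essentially the same route as the paper: both arguments dualize, use the functoriality of $\al^*$ with respect to completely positive maps together with projectivity of $\al$ to land the tensor map in $((\cc X\otimes_\al\cc Y)/\cc N(u\otimes v))^*$, verify feasibility via the telescoping identity exactly as in Proposition~\ref{prop: CP-index is submultiplicative}, and finish with $\cbnorm{\vp\otimes\psi}\leq\cbnorm{\vp}\cbnorm{\psi}$. Your more explicit bookkeeping identifying $((\cc X\otimes_\al\cc Y)/\cc N(u\otimes v))^*$ with $\cc N(u)^\perp\otimes_{\al^*}\cc N(v)^\perp$ is a reasonable unpacking of the step the paper states tersely as ``using projectivity of $\al$.''
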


\begin{proof}
Consider $\vp \in \CP(\cc X^*; (\cc X/ \cc N(u))^*), \psi \in \CP(\cc Y^*; (\cc Y/ \cc N(v))^*)$ such that $\vp - \id_{\cc X^*} \in \CP(\cc X^*)$ and $\psi - \id_{\cc Y^*} \in \CP(\cc Y^*).$ By Remark~\ref{remark: dual tensor product is functorial relative to completely positive maps} we know the induced linear map \begin{align*}
        \vp \otimes \psi: \cc X^* \otimes_{\al^*} \cc Y^* \to (\cc X/ \cc N(u))^* \otimes_{\al^*} (\cc Y/ \cc N(v))^*,
     \end{align*} is completely positive. In particular, after identifying the respective operator systems with their biduals, and using projectivity of $\al$, we obtain a completely positive map \begin{align*}
         \vp \otimes \psi: (\cc X \otimes_{\al} \cc Y)^* \to (\cc X \otimes_{\al} \cc Y/ \cc N(u \otimes v))^*.
     \end{align*} By the same reasoning as in Proposition~\ref{prop: CP-index is submultiplicative}, we deduce $\vp \otimes \psi - \id_{(\cc X \otimes_{\al} \cc Y)^*} \in \CP((\cc X \otimes_{\al} \cc Y)^*).$ Indeed, since $\vp - \id_{\cc X^*} \in \CP(\cc X^*)$ and $\psi - \id_{\cc Y^*} \in \CP(\cc Y^*)$, and by applying Remark~\ref{remark: dual tensor product is functorial relative to completely positive maps}, we deduce \[(\vp \otimes \id_{\cc Y^*})- (\id_{\cc X^*} \otimes \id_{\cc Y^*}) \in \CP(\cc X^* \otimes_{\al^*} \cc Y^*),\] and similarly, \[(\id_{\cc X^*} \otimes \psi) - (\id_{\cc X^*} \otimes \id_{\cc Y^*}) \in \CP(\cc X^* \otimes_{\al^*} \cc Y^*).\] We conclude, \[
     \vp \otimes \psi \geq \vp \otimes \id_{\cc Y^*} \geq \id_{\cc X^*} \otimes \id_{\cc Y^*}.
     \] Since we have the vector space identification $\cc X^* \otimes \cc Y^* \simeq (\cc X \otimes \cc Y)^*$ this proves \begin{align*}
         \vp \otimes \psi - \id_{(\cc X \otimes_{\al} \cc Y)^*} \in \CP((\cc X \otimes_{\al} \cc Y)^*).
     \end{align*}
     
     Let \begin{align*}
         A&:= \{\vp \in \CP(\cc X^*; (\cc X/ \cc N(u))^*):  \vp - \id_{\cc X^*} \in \CP(\cc X^*) \},\,\,\text{and} \\
         B&:= \{ \psi \in \CP(\cc Y^*; (\cc Y/ \cc N(v))^*): \psi - \id_{\cc Y^*} \in \CP(\cc Y^*)\}. 
     \end{align*}
We observe \begin{align*}
    \coind&(\cc X \otimes_\al \cc Y: \cc N(u\otimes v)) \\
    &\leq \inf \{\cbnorm{\vp \otimes \psi}: \vp\in A, \psi \in B\} \\
    &\leq \inf \{ \cbnorm{\vp} \cbnorm{\psi}: \vp \in A, \psi \in B\} \\
    &= \inf \{ \cbnorm{\vp}: \vp \in A \} \inf \{\cbnorm{\psi}: \psi \in B\} \\
    &= \coind(\cc X: \cc N(u)) \coind(\cc Y: \cc N(v)).
\end{align*} This finishes the proof. \qedhere
\end{proof}

For operator systems $\cc X$ and $\cc Y$ with kernels $\cc J\subset \cc X$ and $\cc K\subset \cc Y$, we define $\cc J\vee \cc K\subset \cc X\otimes_{\max} \cc Y$ to be the smallest kernel containing $\cc J\otimes \cc Y + \cc X\otimes \cc K$. The following corollary is immediate:

\begin{cor}
    For finite-dimensional operator systems $\cc X$ and $\cc Y$ with kernels $\cc J\subset \cc X$ and $\cc K\subset \cc Y$ we have that
    \begin{equation}
        \coind(\cc X\otimes_{\max} \cc Y: \cc J\vee \cc K)\leq \coind(\cc X: \cc J)\coind(\cc Y: \cc K).
    \end{equation}
\end{cor}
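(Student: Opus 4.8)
The plan is to deduce the corollary from the preceding proposition by taking $\al=\max$ and choosing $u,v$ to be the canonical quotient maps. Since $\cc J\subset\cc X$ and $\cc K\subset\cc Y$ are kernels, the quotient maps $\pi_{\cc J}\colon\cc X\to\cc X/\cc J$ and $\pi_{\cc K}\colon\cc Y\to\cc Y/\cc K$ are complete quotient maps with $\cc N(\pi_{\cc J})=\cc J$ and $\cc N(\pi_{\cc K})=\cc K$. The maximal operator system tensor product is projective and functorial relative to completely positive maps \cite{KPTT2011}, so the hypotheses of the proposition hold. Applying it with $u=\pi_{\cc J}$ and $v=\pi_{\cc K}$ gives
\[
\coind(\cc X\otimes_{\max}\cc Y:\cc N(\pi_{\cc J}\otimes\pi_{\cc K}))\leq\coind(\cc X:\cc J)\,\coind(\cc Y:\cc K).
\]

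Next I would relate the kernel $\cc N(\pi_{\cc J}\otimes\pi_{\cc K})$ to $\cc J\vee\cc K$. Because $\pi_{\cc J}\otimes\pi_{\cc K}$ annihilates both $\cc J\otimes\cc Y$ and $\cc X\otimes\cc K$, its null space is a kernel containing the algebraic sum $\cc J\otimes\cc Y+\cc X\otimes\cc K$; as $\cc J\vee\cc K$ is by definition the smallest such kernel, we obtain $\cc J\vee\cc K\subseteq\cc N(\pi_{\cc J}\otimes\pi_{\cc K})$. (Projectivity of $\max$ in fact yields the complete order isomorphism $(\cc X\otimes_{\max}\cc Y)/(\cc J\vee\cc K)\cong(\cc X/\cc J)\otimes_{\max}(\cc Y/\cc K)$, hence equality, but the inclusion is all that is required.)

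To close the gap I would then use that the co-index is monotone in its kernel: if $\cc J'\subseteq\cc J''$ are kernels in a finite-dimensional operator system $\cc Z$, then dualizing reverses the inclusion, so $(\cc Z/\cc J'')^*=(\cc J'')^\perp\subseteq(\cc J')^\perp=(\cc Z/\cc J')^*$ inside $\cc Z^*$. Consequently every map feasible for $\Ind_*(\cc Z^*:(\cc Z/\cc J'')^*)$ is feasible for $\Ind_*(\cc Z^*:(\cc Z/\cc J')^*)$, the constraint $\vp-\id_{\cc Z^*}\in\CP(\cc Z^*)$ being unchanged, and comparing infima of cb-norms gives $\coind(\cc Z:\cc J')\leq\coind(\cc Z:\cc J'')$. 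Taking $\cc Z=\cc X\otimes_{\max}\cc Y$, $\cc J'=\cc J\vee\cc K$, and $\cc J''=\cc N(\pi_{\cc J}\otimes\pi_{\cc K})$ and chaining with the displayed inequality yields the claim.

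All of the analytic content is already carried by the preceding proposition, so I do not expect a genuine obstacle here. The only steps needing care are checking that $\max$ is projective and functorial, so that the proposition applies with $\al=\max$, and the monotonicity of the co-index in the kernel; the latter, together with the easy containment $\cc J\vee\cc K\subseteq\cc N(\pi_{\cc J}\otimes\pi_{\cc K})$, is the one bookkeeping remark that makes the corollary ``immediate.''
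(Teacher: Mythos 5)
Your proposal is correct and is exactly the argument the paper intends: the authors state the corollary as ``immediate'' from the preceding proposition, and your instantiation with $\al=\max$, $u=\pi_{\cc J}$, $v=\pi_{\cc K}$, together with the containment $\cc J\vee\cc K\subseteq\cc N(\pi_{\cc J}\otimes\pi_{\cc K})$ and monotonicity of the co-index in the kernel (which also follows directly from Proposition \ref{prop:co-index-2}), is the natural filling-in of that claim.
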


We describe another way to approach the co-index in the finite-dimensional case. Let $\cc X$ be a finite-dimensional operator system. We have that $\vp \leftrightarrow \vp^*$ is a complete isometry between $\CB(\cc X)$ and $\CB(\cc X^*)$ which sends $\cp(\cc X)$ onto $\cp(\cc X^*)$ by \cite[Lemma 1.4]{Kavruk2014}. In this way, following the argument given in \cite[Proposition 2.7]{Kavruk2014}, the cone $\CP(\cc X^*, (\cc X/\cc J)^*)$ is completely isometrically identified with the cone \[\{\vp\in \cp(\cc X) : \ker(\vp)\supset \cc J\}.\]
In this way we have proven
\begin{prop}\label{prop:co-index-2}
    Let $\cc X$ be a finite-dimensional operator system, and let $\cc J\subset \cc X$ be a kernel. We have that
    \begin{equation}
        \coind(\cc X: \cc J) = \inf\{\|\vp\|_{\cb} : \vp\in\cp(\cc X),\  \ker(\vp)\supset \cc J,\ \vp-\id_{\cc X}\in \cp(\cc X)\}.
    \end{equation}
\end{prop}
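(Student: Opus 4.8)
The plan is to produce a cb-norm-preserving bijection between the feasible set defining $\Ind_{*}(\cc X^*: (\cc X/\cc J)^*)$ and the set $\{\vp \in \cp(\cc X) : \ker(\vp) \supset \cc J,\ \vp - \id_{\cc X}\in \cp(\cc X)\}$, after which the two infima must agree term by term. The bridge is the order-duality $\Phi \mapsto \Phi^*$ for endomorphisms, together with the subspace/quotient duality that realizes $(\cc X/\cc J)^*$ inside $\cc X^*$. Since $\cc X$ is finite-dimensional we have $\cc X \cong \cc X^{**}$, so we may freely pass between a map and its double adjoint.

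First I would invoke the two structural facts indicated in the paragraph preceding the statement. By \cite[Lemma 1.4]{Kavruk2014}, the assignment $\Phi \mapsto \Phi^*$ is a complete isometry of $\CB(\cc X^*)$ onto $\CB(\cc X^{**}) = \CB(\cc X)$ carrying $\cp(\cc X^*)$ onto $\cp(\cc X)$; in particular it preserves the cb-norm and complete positivity in both directions. Next, the operator-system duality between quotients and subsystems (\cite{Kavruk2014, KPTT2011}) realizes $(\cc X/\cc J)^*$ completely order isomorphically as the annihilator $\cc J^\perp \subset \cc X^*$, with the resulting inclusion $\iota: (\cc X/\cc J)^* \hookrightarrow \cc X^*$ a complete order embedding. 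Consequently a map $\Psi \in \CP(\cc X^*; (\cc X/\cc J)^*)$ is the same datum as the map $\iota\circ\Psi \in \cp(\cc X^*)$ whose range is contained in $\cc J^\perp$, and $\cbnorm{\Psi} = \cbnorm{\iota\circ\Psi}$.

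Given such a $\Psi$, set $\vp := (\iota\circ\Psi)^* \in \cp(\cc X)$. The verification then breaks into matching each defining condition. Complete positivity of $\vp$ is immediate from Lemma 1.4. The range condition $\operatorname{ran}(\iota\circ\Psi)\subset \cc J^\perp$ dualizes to $\cc J \subset \ker(\vp)$: for $x\in \cc J$ and $f\in \cc X^*$ one has $\langle \vp x, f\rangle = \langle x, (\iota\circ\Psi) f\rangle = 0$ since $(\iota\circ\Psi)f \in \cc J^\perp$, forcing $\vp x = 0$, and the converse pairing argument is identical. The feasibility condition $\iota\circ\Psi - \id_{\cc X^*}\in \cp(\cc X^*)$ dualizes to $\vp - \id_{\cc X} \in \cp(\cc X)$, using linearity of $\Phi\mapsto\Phi^*$, the identity $(\id_{\cc X^*})^* = \id_{\cc X}$, and that the adjoint preserves complete positivity. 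Finally the objective values coincide: $\cbnorm{\vp} = \cbnorm{\iota\circ\Psi} = \cbnorm{\Psi}$. This correspondence is invertible — starting from $\vp\in\cp(\cc X)$ with $\ker(\vp)\supset\cc J$, the map $\vp^*\in\cp(\cc X^*)$ has range in $\cc J^\perp$ and so factors as $\iota\circ\Psi$ for a unique cp map $\Psi:\cc X^*\to(\cc X/\cc J)^*$ — so taking the infimum of the equal objective values over the matched feasible sets yields the claimed identity.

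The genuinely content-bearing step is the second structural fact: that $(\cc X/\cc J)^*$ is completely order isomorphic to $\cc J^\perp\subset\cc X^*$ and that this identification is completely isometric on the relevant cp cone, so that cb-norms really are preserved under $\Psi\leftrightarrow\iota\circ\Psi$. This is precisely the finite-dimensional duality packaged in \cite[Proposition 2.7]{Kavruk2014}, and invoking it correctly — rather than the bilinear-pairing bookkeeping translating range conditions into kernel conditions, which is routine — is where I expect the only real care to be required.
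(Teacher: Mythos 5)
Your argument is correct and is essentially the paper's own proof: both rest on the adjoint map $\vp\leftrightarrow\vp^*$ being a complete isometry of $\CB(\cc X)$ onto $\CB(\cc X^*)$ carrying $\cp(\cc X)$ onto $\cp(\cc X^*)$ (Kavruk, Lemma 1.4) together with the finite-dimensional quotient/subsystem duality identifying $(\cc X/\cc J)^*$ with $\cc J^\perp\subset\cc X^*$ (Kavruk, Proposition 2.7), which matches the two feasible sets condition by condition with equal objective values. Your write-up simply makes explicit the pairing computations that the paper leaves implicit.
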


\begin{example}
    Let $G = (V,E)$ be a graph, and let $\cc S_\G\subset M_n$ be defined as in (\ref{eq:S-gamma}) above. We have that $S_\G^\perp\subset M_n$ is a kernel. Let $A\in M_n$ be positive semidefinite, and consider the Schur multiplier $\vp_A\in \cp(M_n)$. We have that $\ker(\vp_A)\subset S_\G^\perp$ if and only if $A_{ij}=0$ for all $i\not\sim j$. We have that $\vp_A - \id$ is completely positive if and only if $A - J_n$ is positive semidefinite. Thus by (\ref{eq:Lov\'asz-theta}) we have that 
    \begin{equation*}
        \vartheta(\G) \geq \coind(M_n:\cc S_\G^\perp).
    \end{equation*}
    It would be interesting to know if these two quantities are equal.
\end{example}

\subsection{The CB-index}

\begin{defn}\label{defn: cb index}
Given an inclusion of operator spaces $\cc X_0 \subset \cc X$ we define the \emph{CB-index} as $\Ind_\text{CB}(\cc X: \cc X_0):= \lambda_{\cc X}$ where 
\begin{equation}
    \la_{\cc X} = \inf\{\|u\|_{\cb} : u\in \CB(\cc X, \cc X_0),\ \|u - \id_{\cc X}\|_{\cb} \leq \|u\|_{\cb}-1\}.
\end{equation}
Equivalently, we have that 
\begin{align} \lambda_{\cc X}^{-1} = \sup \,\,\{\lambda \in (0,1]: \exists\,u \in \CB(\cc X, \cc X_0),\ \cbnorm{u}=1,\ \cbnorm{u - \lambda \id_{\cc X}} \leq 1 - \lambda\}.
\end{align}
\end{defn}

\begin{prop}
    Let $\cc X_0 \subset \cc X$ and $\cc Y_0 \subset \cc Y$ be inclusions of operator spaces and let $\al$ be a reasonable operator space tensor norm. Then if $\cc R:= \cc X \otimes_\al \cc Y$ and $\cc R_0:= \cc X_0 \otimes_\text{rel} \cc Y_0$ then \begin{align*}
        \Ind_{\CB}(\cc R: \cc R_0) \leq \Ind_{\CB}(\cc X: \cc X_0) \Ind_{\CB}(\cc Y: \cc Y_0).
    \end{align*}
\end{prop}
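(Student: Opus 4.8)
The plan is to mirror the structure of the completely-positive submultiplicativity argument in Proposition~\ref{prop: CP-index is submultiplicative}, but now carried out entirely at the level of cb-norms. The strategy is to take near-optimal maps $u \in \CB(\cc X, \cc X_0)$ and $v \in \CB(\cc Y, \cc Y_0)$ realizing the respective CB-indices, form the tensor product $u \otimes v$, and verify that it is a feasible point for the infimum defining $\Ind_{\CB}(\cc R : \cc R_0)$ with $\cbnorm{u \otimes v} \leq \cbnorm{u}\cbnorm{v}$. Feasibility has two ingredients: first, $u \otimes v$ must land in $\cc R_0 = \cc X_0 \otimes_\text{rel} \cc Y_0$, which is immediate since $u(\cc X) \subset \cc X_0$ and $v(\cc Y) \subset \cc Y_0$ force the range of the tensor into $\cc X_0 \otimes \cc Y_0$ by linearity; second, one needs the cb-norm control $\cbnorm{u \otimes v - \id_{\cc R}}_{\cb} \leq \cbnorm{u \otimes v}_{\cb} - 1$.

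First I would record the two elementary consequences of $\al$ being a \emph{reasonable} operator space tensor norm. The defining identity $\cbnorm{u \otimes v : \cc X \otimes_\al \cc Y \to \cc S \otimes_\al \cc T} = \cbnorm{u}\cbnorm{v}$ gives multiplicativity of the objective, so $\cbnorm{u \otimes v}_{\cb} = \cbnorm{u}_{\cb}\cbnorm{v}_{\cb}$, and when applied with one factor the identity map it also yields the cross-norm estimates $\cbnorm{w \otimes \id_{\cc Y}}_{\cb} = \cbnorm{w}_{\cb}$ and $\cbnorm{\id_{\cc X} \otimes w}_{\cb} = \cbnorm{w}_{\cb}$ for any $w$. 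These are exactly the tools that replace the functoriality-of-$\CP_1$ Lemma~\ref{lem: functorial op sys tensor functorial wrt CP_1} used in the positive case.

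The key algebraic step is the telescoping identity
\begin{equation*}
    u \otimes v - \id_{\cc X} \otimes \id_{\cc Y} = (u - \id_{\cc X}) \otimes v + \id_{\cc X} \otimes (v - \id_{\cc Y}).
\end{equation*}
Applying the triangle inequality for the cb-norm together with reasonableness, one gets
\begin{equation*}
    \cbnorm{u \otimes v - \id_{\cc R}}_{\cb} \leq \cbnorm{u - \id_{\cc X}}_{\cb}\,\cbnorm{v}_{\cb} + \cbnorm{v - \id_{\cc Y}}_{\cb}.
\end{equation*}
Writing $s = \cbnorm{u}_{\cb}$ and $t = \cbnorm{v}_{\cb}$ and using the feasibility hypotheses $\cbnorm{u - \id_{\cc X}}_{\cb} \leq s - 1$ and $\cbnorm{v - \id_{\cc Y}}_{\cb} \leq t - 1$, the right-hand side is bounded by $(s-1)t + (t-1) = st - 1 = \cbnorm{u \otimes v}_{\cb} - 1$, which is precisely the constraint needed for $u \otimes v$ to be feasible. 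Taking the infimum over $u$ and $v$ then delivers the submultiplicativity bound.

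The main obstacle to watch is the telescoping estimate: the naive symmetric split would produce $\cbnorm{u - \id}\cbnorm{v} + \cbnorm{u}\cbnorm{v - \id}$, which is too large to close the inequality. The trick is the asymmetric grouping above, pairing $(u - \id_{\cc X})$ with $v$ but $(v - \id_{\cc Y})$ with $\id_{\cc X}$, so that only one factor of $t$ appears and the arithmetic collapses to $st - 1$. One must also confirm that the relative tensor structure $\cc R_0 = \cc X_0 \otimes_\text{rel} \cc Y_0$ is genuinely the restriction of $\al$ to $\cc X_0 \otimes \cc Y_0$, so that $u \otimes v$ really is a competitor in the defining infimum; this is built into the definition of $\cc R_0$ and requires no further work beyond noting that the range constraint is a purely linear condition.
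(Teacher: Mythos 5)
Your proof is correct and follows essentially the same route as the paper: the same asymmetric telescoping decomposition $u\otimes v-\id = (u-\id_{\cc X})\otimes v+\id_{\cc X}\otimes(v-\id_{\cc Y})$, with reasonableness of $\al$ supplying multiplicativity of cb-norms. The only difference is cosmetic --- the paper normalizes $\cbnorm{u}=\cbnorm{v}=1$ and works with the reciprocal parameters $\la,\mu$ from the equivalent form of Definition~\ref{defn: cb index}, whereas you work with the unnormalized form directly; the arithmetic is identical.
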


\begin{proof}
    Let $u:\cc X\to \cc X_0$ and $v:\cc Y\to \cc Y_0$ be completely bounded maps with $\cbnorm{u}=\cbnorm{v}=1$, and let $\la,\mu\geq 0$ be such that $\cbnorm{u - \la\id_{\cc X}}\leq 1-\la$ and $\cbnorm{v - \mu\id_{\cc Y}}\leq 1 - \mu$.
    Since $\alpha$ is reasonable, we have that $\cbnorm{u\otimes v: \cc R\to \cc R_0} =1$.
    Consider the maps
    \[\vp := u\otimes v - \la\id_{\cc X}\otimes v,\quad \psi := \id_{\cc X}\otimes v - \mu\id_{\cc R}\in \CB(\cc R).\]
    We have that $\cbnorm{\vp}\leq 1-\la$ and $\cbnorm{\psi}\leq 1-\mu$. Since $u\otimes v- \la\mu\id_{\cc R} = \vp - \la\psi$, it follows from the triangle inequality that
    \begin{equation*}
        \cbnorm{u\otimes v - \la\mu\id_{\cc R}}\leq (1-\la) + \la(1-\mu) = 1 - \la\mu 
    \end{equation*}
    This verifies that $\Ind_{\CB}(\cc R: \cc R_0)\leq (\la\mu)^{-1}$, which proves the result. \qedhere
\end{proof}

Let $\cc X$ be an operator space. Given a concrete realization $\cc X\subset \cc B(H)$, we can define the \emph{Paulsen system}, $\widetilde{\cc X}$, as the operator system
\begin{equation*}
    \widetilde{\cc X} := \left\{\begin{pmatrix} \eta_1 1 & x\\ y^* & \eta_2 1\end{pmatrix} : x,y\in \cc X,\ \eta_1,\eta_2\in \bb C\right\}\subset M_2(\cc B(H)).
\end{equation*}

Given a linear map $u: \cc X\to \cc Y$ between operator spaces and $R\geq 0$, we can induce a $\ast$-linear map $\tilde u_R : \widetilde{\cc X}\to \widetilde{\cc Y}$ on the the respective Paulsen systems by 
\begin{equation*}
    \tilde u_R\begin{pmatrix} \eta_1 1 & x\\ y^* & \eta_2 1\end{pmatrix} = \begin{pmatrix} R\,\eta_1 1 & u(x)\\ u(y)^* & R\,\eta_2 1\end{pmatrix}.
\end{equation*}
By construction $\tilde u_R$ sends any scalar multiple of the identity in $\widetilde{\cc X}$ to a scalar multiple of the identity.

The following lemma is well known: see \cite[Theorem B.5]{BrownOzawa} for a proof.

\begin{lem}\label{lem:paulsen-cp-map}
    The map $\tilde u_R$ is completely positive if and only if $R\geq \cbnorm{u}$.
\end{lem}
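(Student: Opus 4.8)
The plan is to establish the Paulsen system lemma by relating complete positivity of $\tilde u_R$ to the completely bounded norm of $u$ through the standard characterization of completely positive maps on $2\times 2$ operator matrices. First I would recall the well-known fact that a $\ast$-linear map of the form $\Psi\begin{pmatrix} a & x\\ y^* & b\end{pmatrix} = \begin{pmatrix} R\,a & u(x)\\ u(y)^* & R\,b\end{pmatrix}$ between Paulsen systems is completely positive precisely when a certain positivity condition on matrix amplifications holds. The key structural observation is that a block operator matrix $\begin{pmatrix} a\, I & x\\ x^* & b\, I\end{pmatrix}$ with $a,b > 0$ is positive if and only if $\norm{x} \leq \sqrt{ab}$, and more importantly the completely positive version of this statement controls the cb-norm.

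The main steps would proceed as follows. I would first reduce to checking complete positivity at each matrix level: $\tilde u_R$ is completely positive if and only if for every $n$ the amplification $(\tilde u_R)_n$ sends positive elements of $M_n(\widetilde{\cc X})$ to positive elements of $M_n(\widetilde{\cc Y})$. A positive element of $M_n(\widetilde{\cc X})$ can be written in block form involving scalar-identity diagonal blocks and an off-diagonal block coming from an element $X \in M_n(\cc X)$. Applying $(\tilde u_R)_n$ scales the diagonal by $R$ and applies $u_n$ to the off-diagonal. Using the positivity criterion for such $2\times 2$ block matrices (which reduces to an inequality of the form $\norm{u_n(X)} \leq R$ whenever $\norm{X}\leq 1$, after normalizing the diagonal blocks), complete positivity of $\tilde u_R$ becomes exactly the statement that $\norm{u_n(X)} \leq R\norm{X}$ for all $n$ and all $X$. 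Taking the supremum over $n$ yields $\cbnorm{u}\leq R$, giving one direction; conversely if $R \geq \cbnorm{u}$ the same inequalities hold at every level, establishing complete positivity.

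Since the statement explicitly says the lemma is well known and cites \cite[Theorem B.5]{BrownOzawa} for a proof, I would keep the argument brief and simply invoke that reference, perhaps sketching the reduction to the block-matrix positivity criterion as above rather than reproving it in full. The main obstacle, were one to write out a self-contained proof, is the careful bookkeeping at the level of matrix amplifications: one must verify that the generic positive element of $M_n(\widetilde{\cc X})$ really does decompose into the diagonal-plus-off-diagonal form with the off-diagonal governed by an arbitrary element of $M_n(\cc X)$ of controlled norm, and that the scaling by $R$ on the diagonals interacts with the block-positivity inequality to produce precisely the cb-norm bound. This is routine but requires attention to the Archimedean closure and the fact that the diagonal scalar blocks can be taken strictly positive without loss of generality. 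Given that the paper defers to \cite{BrownOzawa}, the cleanest presentation is to state that the equivalence follows immediately from the cited theorem applied to $\tilde u_R$.
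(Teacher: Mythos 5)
Your proposal is correct and takes essentially the same approach as the paper, which gives no argument of its own and simply defers to \cite{BrownOzawa}*{Theorem B.5}; the block-matrix positivity sketch you outline is exactly the standard argument underlying that reference. No gaps worth flagging.
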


From this result we can deduce the following

\begin{lem}\label{lem:cb-to-cp-index}
    Let $\cc X$ be an operator space. For $u\in \CB(\cc X)$, we have that 
    \[\cbnorm{u - \la\id_{\cc X}} \leq 1-\la\ \text{for}\ \la\geq 0\] if and only if $\tilde u_1- \la\id_{\widetilde{\cc X}}\in \cp_1(\widetilde{\cc X})$.
\end{lem}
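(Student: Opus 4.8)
The plan is to recognize the map $\tilde u_1 - \la\id_{\widetilde{\cc X}}$ as a Paulsen amplification of a single auxiliary map, and then quote Lemma~\ref{lem:paulsen-cp-map}. Set $v := u - \la\id_{\cc X}\in\CB(\cc X)$ and $R := 1-\la$. The first step is the purely algebraic identity
\[
(\tilde u_1 - \la\id_{\widetilde{\cc X}})\begin{pmatrix} \eta_1 1 & x \\ y^* & \eta_2 1\end{pmatrix} = \begin{pmatrix} (1-\la)\eta_1 1 & u(x)-\la x \\ (u(y)-\la y)^* & (1-\la)\eta_2 1\end{pmatrix} = \begin{pmatrix} R\,\eta_1 1 & v(x) \\ v(y)^* & R\,\eta_2 1\end{pmatrix},
\]
which shows $\tilde u_1 - \la\id_{\widetilde{\cc X}} = \tilde v_R$ exactly, simply by reading off the definition of $\tilde v_R$.

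The second step records that, by construction, $\tilde v_R$ carries scalar multiples of the identity to scalar multiples of the identity (indeed $\tilde v_R(cI_2) = Rc\,I_2$), so that $\tilde v_R$ lies in $\cp_1(\widetilde{\cc X})$ precisely when it is completely positive. The third step then applies Lemma~\ref{lem:paulsen-cp-map} to the map $v$ at scale $R$: for $R\geq 0$, the amplification $\tilde v_R$ is completely positive if and only if $R\geq\cbnorm{v}$, that is, $1-\la\geq\cbnorm{u-\la\id_{\cc X}}$. Chaining these equivalences yields exactly the claim.

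The only delicate point---and the closest thing to an obstacle---is the sign of $R=1-\la$, since Lemma~\ref{lem:paulsen-cp-map} is phrased for nonnegative scales. For $0\leq\la\leq 1$ the argument above applies verbatim. For $\la>1$ both sides of the asserted equivalence fail: the inequality $\cbnorm{u-\la\id_{\cc X}}\leq 1-\la$ is impossible because its right-hand side is negative, while $\tilde u_1-\la\id_{\widetilde{\cc X}}$ sends the order unit $I_2$ to $(1-\la)I_2$, a non-positive element, so the map cannot be completely positive and hence is not in $\cp_1(\widetilde{\cc X})$. Thus the equivalence holds for every $\la\geq 0$. The substantive content is entirely in spotting the identity of the first step; everything after is a direct citation of Lemma~\ref{lem:paulsen-cp-map}.
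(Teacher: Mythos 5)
Your proof is correct and follows exactly the paper's route: set $v=u-\la\id_{\cc X}$, observe the algebraic identity $\tilde v_{(1-\la)}=\tilde u_1-\la\id_{\widetilde{\cc X}}$, and invoke Lemma \ref{lem:paulsen-cp-map}. Your extra remark disposing of the case $\la>1$ (where both sides of the equivalence fail) is a small point the paper leaves implicit, but it does not change the argument.
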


\begin{proof}
    Setting $v := u - \la\id_{\cc X}\in \CB(\cc X)$, by Lemma \ref{lem:paulsen-cp-map} we have that $\cbnorm{v}\leq 1- \la$ if and only if $\widetilde{v}_{(1 - \la)}$ is completely positive. A standard computation shows that $\tilde{v}_{(1-\la)} = \tilde u_1 - \la\id_{\widetilde{\cc X}}$. \qedhere
\end{proof}

\begin{prop}
Let $\cc X_0 \subset \cc X$ be an inclusion of operator spaces and let $\widetilde{\cc X}_o\subset \widetilde{\cc X}$ be the corresponding operator system inclusion of Paulsen systems. Then 
\begin{align*}
    \Ind_{\cp}(\widetilde{\cc X}: \widetilde{\cc X}_o) \leq \Ind_{\CB}(\cc X: \cc X_0).
\end{align*}
\end{prop}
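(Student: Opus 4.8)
The plan is to relate the CB-index of the operator space inclusion $\cc X_0\subset \cc X$ to the CP-index of the associated Paulsen system inclusion $\widetilde{\cc X}_0\subset \widetilde{\cc X}$ by taking an arbitrary feasible competitor for the CB-index and turning it into a feasible competitor for the CP-index that does not increase the relevant quantity. First I would unwind the definitions: a map $u\in \CB(\cc X,\cc X_0)$ is a candidate for $\Ind_{\CB}(\cc X:\cc X_0)$ if $\cbnorm{u-\id_{\cc X}}\leq \cbnorm{u}-1$. By homogeneity it is cleaner to normalize; writing $\la := \cbnorm{u}^{-1}$, the constraint becomes (after scaling $u$ to have $\cbnorm{u}=1$) exactly the condition $\cbnorm{u-\la\id_{\cc X}}\leq 1-\la$ appearing in the equivalent form of Definition \ref{defn: cb index} and in Lemma \ref{lem:cb-to-cp-index}. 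So the second (equivalent) formulation of the CB-index is the version I would work with, since it is precisely what Lemma \ref{lem:cb-to-cp-index} is stated to convert.

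The central step is to apply Lemma \ref{lem:cb-to-cp-index}: given such a normalized $u\in \CB(\cc X,\cc X_0)$ with $\cbnorm{u}=1$ and $\cbnorm{u-\la\id_{\cc X}}\leq 1-\la$, the lemma yields that $\tilde u_1 - \la\id_{\widetilde{\cc X}}\in \cp_1(\widetilde{\cc X})$. I then need to check that $\tilde u_1$ (or rather a suitable normalization of it) is a legitimate feasible point for the CP-index program for $\widetilde{\cc X}_0\subset\widetilde{\cc X}$, namely that it lies in $\CP_1(\widetilde{\cc X})$, that its range sits inside $\widetilde{\cc X}_0$, and that $\tilde\psi-\id\in \cp(\widetilde{\cc X})$ for the appropriately scaled map $\tilde\psi$. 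Since $\la\cdot\tilde u_1$ would have $(\la\tilde u_1) - \id = \la(\tilde u_1 - \la^{-1}\id)$—care is needed here with the scaling—I would set $\psi := \la^{-1}\tilde u_1$ so that $\psi - \id_{\widetilde{\cc X}} = \la^{-1}(\tilde u_1 - \la\id_{\widetilde{\cc X}})\in \cp(\widetilde{\cc X})$ by the lemma together with positivity being preserved under multiplication by the positive scalar $\la^{-1}$, and $\psi(1) = \la^{-1}\tilde u_1(1)$. The range containment $\psi(\widetilde{\cc X})\subset \widetilde{\cc X}_0$ is immediate from the block form of $\tilde u_R$, since the off-diagonal blocks are $u(x), u(y)^*$ with $u(\cc X)\subset \cc X_0$ and the diagonal blocks are scalars, which land in $\widetilde{\cc X}_0$ by construction. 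The norm of $\psi(1)$ is then $\la^{-1}$ times the norm of a scalar-multiple-of-identity block matrix, giving $\norm{\psi(1)} = \la^{-1}$, which is exactly the value of $u$ as a CB-index competitor.

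Assembling these, each normalized CB-competitor $u$ with parameter $\la$ produces a feasible CP-competitor $\psi$ with $\norm{\psi(1)} = \la^{-1}$, so taking the infimum over all such $u$ gives $\Ind_{\cp}(\widetilde{\cc X}:\widetilde{\cc X}_0)\leq \la^{-1}$, and optimizing over $\la$ (equivalently over $u$) yields $\Ind_{\cp}(\widetilde{\cc X}:\widetilde{\cc X}_0)\leq \Ind_{\CB}(\cc X:\cc X_0)$. The main obstacle I anticipate is bookkeeping the scaling consistently: the CB-index is phrased with $\cbnorm{u}=1$ and a multiplicative objective $\cbnorm{u}$, whereas the CP-index objective is the additive/operator-norm quantity $\norm{\vp(1)}$, and the passage through $\tilde u_R$ and Lemma \ref{lem:cb-to-cp-index} forces a specific correspondence $\cbnorm{u} = \la^{-1}$ between the two normalizations. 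Getting the scalar $\la^{-1}$ to match on both sides—so that the inequality comes out in the stated direction rather than its reciprocal—is the only genuinely delicate point; the unitality, range-containment, and complete positivity checks are routine once the Paulsen-system block structure and Lemma \ref{lem:cb-to-cp-index} are invoked. One should also confirm that $\psi\in\CP_1(\widetilde{\cc X})$ (as opposed to merely $\CP$), which follows because $\tilde u_1$ sends scalar multiples of the identity to scalar multiples of the identity by construction of $\tilde u_R$.
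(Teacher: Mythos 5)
Your proposal is correct and follows essentially the same route as the paper: normalize $u$ so that $\cbnorm{u}=1$, invoke Lemma \ref{lem:cb-to-cp-index} to get $\tilde u_1 - \la\id_{\widetilde{\cc X}}\in\cp_1(\widetilde{\cc X})$, rescale to $\la^{-1}\tilde u_1$, verify the range containment from the block form of $\tilde u_R$, and read off $\|\la^{-1}\tilde u_1(1)\|=\la^{-1}$. The scaling bookkeeping you flag as the delicate point is handled exactly as you describe in the paper's argument.
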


\begin{proof}
    Let $u\in \CB(\cc X)$ be such that $\cbnorm{u}=1$ and $\cbnorm{u - \la\id_{\cc X}}\leq 1- \la$ for some $\la>0$. By Lemmas \ref{lem:paulsen-cp-map} and \ref{lem:cb-to-cp-index} we observe \[\la^{-1}\tilde u_1 - \id_{\widetilde{\cc X}} = \la^{-1}(\tilde u_1 - \la\id_{\widetilde{\cc X}})\in \CP_1(\widetilde{\cc X}).\]
    It can be easily checked that if $u(\cc X)\subset \cc X_0$ then $\tilde u_R(\widetilde{\cc X})\subset \widetilde{\cc X}_o$ for any $R\geq 0$. Thus $\la^{-1}\tilde u_{1}\in \La(\widetilde{\cc X},\widetilde{\cc X}_o)$ and $\Ind_{\cp}(\widetilde{\cc X}: \widetilde{\cc X}_o) \leq \|\la^{-1}\tilde u_{1}(1)\| = \la^{-1}$. \qedhere
\end{proof}

\begin{remark}
For any inclusion of operator spaces $\cc X_0\subset \cc X$, it follows that $\Ind_{\cp}(\widetilde{\cc X}: \widetilde{\cc X_0})\leq 2$ as witnessed by inducing the zero map $0: \cc X\to \cc X$ to $\widetilde{\cc X}$. Thus, the previous result cannot be used to show that the $\CB$-index can take arbitrarily large values. This is nonetheless true, as the following example shows.
\end{remark}

Any Banach space $X$ may be equipped with a \emph{minimal operator space structure} $\MIN(X)$, so that any for any linear map $T:X\to X$, we have that \[\|T\| = \|T:\MIN(X)\to\MIN(X)\|_{\cb}.\] Further, we have that for any closed subspace $X_0\subset X$, that $\MIN(X_0)\subset \MIN(X)$ completely isometrically. It follows that for any inclusion of Banach spaces $X_0\subset X$, we may define the \emph{bounded index} $\Ind_{\text{B}}(X:X_0)$ by
\begin{equation}
    \Ind_{\text{B}}(X:X_0) := \inf\{\|T\| : T\in\cc L(X,X_0),\ \|T - \id_X\|\leq \|T\|-1\}.
\end{equation}
As is the case of $\CB$-index we have that
\begin{equation*}
    \Ind_{\text{B}}(X:X_0)^{-1} = \sup\{\la\geq 0: T\in \cc L(X,X_0),\ \|T\|=1,\ \|T - \la\id_X\|\leq 1-\la\}.
\end{equation*}
By the previous remarks, it follows that
\[\Ind_{\CB}(\MIN(X): \MIN(X_0)) = \Ind_{\text{B}}(X: X_0).\]

\begin{example}
    Consider $\ell_\infty(n)$, and let $E: \ell_\infty(n)\to \ell_\infty(n)$ be the conditional expectation onto the constant sequences $\bb C 1_n$. That is, $E(x_1,\dotsc,x_n) = (\bar x, \dotsc, \bar x)$, where $\bar x  = \frac{1}{n} \sum_{i=1}^n x_i$. It is easy to see that $\|E: \ell_\infty(n)\to \ell_\infty(n)\|=1$. Notice that for $x = (1, \dotsc, 1, 0)$ we have that $\|E(x) - \la x\| = \frac{n-1}{n}$ for any $\la\geq 0$. Thus $\|E- \la\id\|\leq 1-\la$ only if $0\leq\la\leq 1/n$, and that any such value suffices. 
    
    Next we would like to show that for any $T:\ell_\infty(n)\to \bb C1_n$ with  $\|T\|=1$, we have that $\|T-\la\id\|\geq 1-\la$ for $\la\geq 1/n$. Let $T(e_i) = c_i 1_n$ for $c_i\in \bb C$, and note that $|c_1|+\dotsb + |c_n| = \|T\|=1$.  Without loss of generality we can assume that $|c_1|\geq |c_2| \geq \dotsb \geq |c_n|$, so that if $c_1,\dotsc, c_n$ is not constant, then $\|T(s) - \la s\|> \frac{n-1}{n}$ for $s = (s_1, \dotsc, s_{n-1}, 0)$, where $s_i = \bar c_i/|c_i|$ if defined and $s_i =0$, otherwise.
    Altogether, this shows that $\Ind_{\text{B}}(\ell_\infty(n) : \bb C1_n) = n$.
\end{example}

For a graph $\G = (V,E)$, pick an arbitrary vertex $v_0$, and consider the space ${\rm Lip_0}(\G)$ consisting of all functions $f: V\to \bb C$ with $f(v_0)=0$ under the discrete Lipschitz norm $\|f\|_{\rm Lip} = \max\{\|f(v) - f(w)\|: (v,w)\in E\}$. We may identify the dual ${\rm Lip}_0(\G)^*$ with the \emph{Lipschitz free space} $\cc F(\G)$ over $\G$: see \cite[Chapter 10]{Ostrovskii2013} for details. When equipped with the maximal operator space structure, this becomes the Lipschitz-free operator space $\cc F_{os}(\G)$ over $\G$ \cite{braga2021}.

\begin{question}
    For a graph $\G = (V,E)$ what are $\Ind(\cc F(\G) : \bb C1_V)$ and $\Ind_{\cb}(\cc F_{os}(\G) : \bb C1_V)$? Here $1_V(f) = \sum_{x\in V} f(x)$.
\end{question}

\section*{Acknowledgements}

Roy Araiza was partially supported as a J.L. Doob Research Assistant Professor at the University of Illinois at Urbana-Champaign. Griffin and Sinclair were partially supported by NSF grant DMS-2055155. The authors would like to thank Li Gao, and Sam Harris for their valuable input on an early draft of the manuscript.

\begin{bibdiv}
\begin{biblist}

\bib{AGKS}{article}
{
author={Araiza, R.},
author={Griffin, C.},
author={Khilnani, A.},
author={Sinclair, T.},
title={Approximating projections by quantum operations},
journal={preprint},
status={https://arxiv.org/abs/2203.02627}
}

\bib{Baillet1988}{article}{
   author={Baillet, Michel},
   author={Denizeau, Yves},
   author={Havet, Jean-Fran\c{c}ois},
   title={Indice d'une esp\'{e}rance conditionnelle},
   language={French},
   journal={Compositio Math.},
   volume={66},
   date={1988},
   number={2},
   pages={199--236},
   issn={0010-437X},
   review={\MR{945550}},
}

\bib{blecher1991tensor}{article}{
    author={Blecher, David},
    author={Paulsen, Vern},
  title={Tensor products of operator spaces},
  journal={Journal of Functional Analysis},
  volume={99},
  number={2},
  pages={262--292},
  year={1991},
  publisher={Elsevier}
}

    \bib{Boreland2021}{article}{
   author={Boreland, G.},
   author={Todorov, I. G.},
   author={Winter, A.},
   title={Sandwich theorems and capacity bounds for non-commutative graphs},
   journal={J. Combin. Theory Ser. A},
   volume={177},
   date={2021},
   pages={105302, 39},
   issn={0097-3165},
   review={\MR{4139106}},
   doi={10.1016/j.jcta.2020.105302},
}

\bib{braga2021}{article}{
      title={Lipschitz geometry of operator spaces and Lipschitz-free operator spaces}, 
      author={Braga, B. de Mendon\c{c}a},
      author={Ch\'avez-Dom\'inguez, J.A.},
      author={Sinclair, T.},
      year={2021},
      eprint={2104.02714},
      archivePrefix={arXiv},
      primaryClass={math.OA}
}

\bib{BrownOzawa}{book}{
   author={Brown, Nathanial P.},
   author={Ozawa, Narutaka},
   title={$C^*$-algebras and finite-dimensional approximations},
   series={Graduate Studies in Mathematics},
   volume={88},
   publisher={American Mathematical Society, Providence, RI},
   date={2008},
   pages={xvi+509},
   isbn={978-0-8218-4381-9},
   isbn={0-8218-4381-8},
   review={\MR{2391387}},
   doi={10.1090/gsm/088},
}

\bib{Choi1975}{article}{
    AUTHOR = {Choi, Man Duen},
     TITLE = {Completely positive linear maps on complex matrices},
   JOURNAL = {Linear Algebra Appl.},
  FJOURNAL = {Linear Algebra and its Applications},
    VOLUME = {10},
      YEAR = {1975},
     PAGES = {285--290},
      ISSN = {0024-3795},
   MRCLASS = {15A60 (46L05)},
  MRNUMBER = {376726},
MRREVIEWER = {E. St\o rmer},
       DOI = {10.1016/0024-3795(75)90075-0},
       URL = {https://doi.org/10.1016/0024-3795(75)90075-0},
}

\bib{Choi1977}{article}{
  author={Choi, Man Duen},
  author={Effros, Edward},
  title={Injectivity and operator spaces},
  journal={Journal of functional analysis},
  volume={24},
  number={2},
  pages={156--209},
  year={1977},
  publisher={Elsevier},
}

    \bib{Winter2013}{article}{
   author={Duan, Runyao},
   author={Severini, Simone},
   author={Winter, Andreas},
   title={Zero-error communication via quantum channels, noncommutative
   graphs, and a quantum Lov\'{a}sz number},
   journal={IEEE Trans. Inform. Theory},
   volume={59},
   date={2013},
   number={2},
   pages={1164--1174},
   issn={0018-9448},
   review={\MR{3015725}},
   doi={10.1109/TIT.2012.2221677},
}

\bib{Farenick2012}{article}{
   author={Farenick, Douglas},
   author={Paulsen, Vern I.},
   title={Operator system quotients of matrix algebras and their tensor
   products},
   journal={Math. Scand.},
   volume={111},
   date={2012},
   number={2},
   pages={210--243},
   issn={0025-5521},
   review={\MR{3023524}},
   doi={10.7146/math.scand.a-15225},
}

    \bib{FrankKirchberg1998}{article}{
   author={Frank, Michael},
   author={Kirchberg, Eberhard},
   title={On conditional expectations of finite index},
   journal={J. Operator Theory},
   volume={40},
   date={1998},
   number={1},
   pages={87--111},
   issn={0379-4024},
   review={\MR{1642530}},
}

\bib{Ganesan}{article}{
    author={Ganesan, Priyanga},
    title={Spectral bounds for the quantum chromatic number of quantum graphs},
    date={2021},
    journal={preprint},
    status={arXiv:2112.01726}
}

\bib{Li2020}{article}{
   author={Gao, Li},
   author={Junge, Marius},
   author={LaRacuente, Nicholas},
   title={Relative entropy for von Neumann subalgebras},
   journal={Internat. J. Math.},
   volume={31},
   date={2020},
   number={6},
   pages={2050046, 35},
   issn={0129-167X},
   review={\MR{4120441}},
   doi={10.1142/S0129167X20500469},
}

\bib{GartnerMatousek2012}{book}{
   author={G\"{a}rtner, Bernd},
   author={Matou\v{s}ek, Ji\v{r}\'{\i}},
   title={Approximation algorithms and semidefinite programming},
   publisher={Springer, Heidelberg},
   date={2012},
   pages={xii+251},
   isbn={978-3-642-22014-2},
   isbn={978-3-642-22015-9},
   review={\MR{3015090}},
   doi={10.1007/978-3-642-22015-9},
}

\bib{Haagerup}{article}{
   author={Haagerup, Uffe},
   title={Group $C^*$-algebras without the completely bounded approximation
   property},
   journal={J. Lie Theory},
   volume={26},
   date={2016},
   number={3},
   pages={861--887},
   issn={0949-5932},
   review={\MR{3476201}},
}

\bib{Hoffman1970}{article}{
   author={Hoffman, Alan J.},
   title={On eigenvalues and colorings of graphs},
   conference={
      title={Graph Theory and its Applications},
      address={Proc. Advanced Sem., Math. Research Center, Univ. of
      Wisconsin, Madison, Wis.},
      date={1969},
   },
   book={
      publisher={Academic Press, New York},
   },
   date={1970},
   pages={79--91},
   review={\MR{0284373}},
}

\bib{Jones1983}{article}{
   author={Jones, V.~F.~R.},
   title={Index for subfactors},
   journal={Invent. Math.},
   volume={72},
   date={1983},
   number={1},
   pages={1--25},
   issn={0020-9910},
   review={\MR{696688}},
   doi={10.1007/BF01389127},
}

\bib{Kavruk2014}{article}{
   author={Kavruk, Ali \c{S}.},
   title={Nuclearity related properties in operator systems},
   journal={J. Operator Theory},
   volume={71},
   date={2014},
   number={1},
   pages={95--156},
   issn={0379-4024},
   review={\MR{3173055}},
   doi={10.7900/jot.2011nov16.1977},
}

\bib{Kavruk2015}{article}{
   author={Kavruk, A. Samil},
   title={On a non-commutative analogue of a classical result of Namioka and
   Phelps},
   journal={J. Funct. Anal.},
   volume={269},
   date={2015},
   number={10},
   pages={3282--3303},
   issn={0022-1236},
   review={\MR{3401618}},
   doi={10.1016/j.jfa.2015.09.002},
}

\bib{KPTT2011}{article}{
   author={Kavruk, Ali},
   author={Paulsen, Vern I.},
   author={Todorov, Ivan G.},
   author={Tomforde, Mark},
   title={Tensor products of operator systems},
   journal={J. Funct. Anal.},
   volume={261},
   date={2011},
   number={2},
   pages={267--299},
   issn={0022-1236},
   review={\MR{2793115}},
   doi={10.1016/j.jfa.2011.03.014},
}

\bib{LongoWitten2022}{article}{
    author={Longo, Roberto},
    author={Witten, Edward},
    title={A note on continous entropy},
    date={2022},
    journal={preprint},
    status={arXiv:2202.03357}
}

    \bib{Lovasz1979}{article}{
   author={Lov\'{a}sz, L\'{a}szl\'{o}},
   title={On the Shannon capacity of a graph},
   journal={IEEE Trans. Inform. Theory},
   volume={25},
   date={1979},
   number={1},
   pages={1--7},
   issn={0018-9448},
   review={\MR{514926}},
   doi={10.1109/TIT.1979.1055985},
}

\bib{Lovasz2003}{article}{
   author={Lov\'{a}sz, L.},
   title={Semidefinite programs and combinatorial optimization},
   conference={
      title={Recent advances in algorithms and combinatorics},
   },
   book={
      series={CMS Books Math./Ouvrages Math. SMC},
      volume={11},
      publisher={Springer, New York},
   },
   date={2003},
   pages={137--194},
   review={\MR{1952986}},
   doi={10.1007/0-387-22444-0\_6},
}

\bib{Luenberger2008}{book}{
   author={Luenberger, David G.},
   author={Ye, Yinyu},
   title={Linear and nonlinear programming},
   series={International Series in Operations Research \& Management
   Science},
   volume={116},
   edition={3},
   publisher={Springer, New York},
   date={2008},
   pages={xiv+546},
   isbn={978-0-387-74502-2},
   review={\MR{2423726}},
}

\bib{PaulsenOrtiz2015}{article}{
   author={Ortiz, Carlos M.},
   author={Paulsen, Vern I.},
   title={Lov\'{a}sz theta type norms and operator systems},
   journal={Linear Algebra Appl.},
   volume={477},
   date={2015},
   pages={128--147},
   issn={0024-3795},
   review={\MR{3337337}},
   doi={10.1016/j.laa.2015.03.022},
}

\bib{Ostrovskii2013}{book}{
   author={Ostrovskii, Mikhail I.},
   title={Metric embeddings},
   series={De Gruyter Studies in Mathematics},
   volume={49},
   note={Bilipschitz and coarse embeddings into Banach spaces},
   publisher={De Gruyter, Berlin},
   date={2013},
   pages={xii+372},
   isbn={978-3-11-026340-4},
   isbn={978-3-11-026401-2},
   review={\MR{3114782}},
   doi={10.1515/9783110264012},
}

   \bib{paulsen2002completely}{book}{
    AUTHOR = {Paulsen, Vern},
     TITLE = {Completely bounded maps and operator algebras},
    SERIES = {Cambridge Studies in Advanced Mathematics},
    VOLUME = {78},
 PUBLISHER = {Cambridge University Press, Cambridge},
      YEAR = {2002},
     PAGES = {xii+300},
      ISBN = {0-521-81669-6},
}

\bib{PaulsenTodorov2015}{article}{
   author={Paulsen, Vern I.},
   author={Todorov, Ivan G.},
   title={Quantum chromatic numbers via operator systems},
   journal={Q. J. Math.},
   volume={66},
   date={2015},
   number={2},
   pages={677--692},
   issn={0033-5606},
   review={\MR{3356844}},
   doi={10.1093/qmath/hav004},
}

    \bib{PimsnerPopa1986}{article}{
   author={Pimsner, Mihai},
   author={Popa, Sorin},
   title={Entropy and index for subfactors},
   journal={Ann. Sci. \'{E}cole Norm. Sup. (4)},
   volume={19},
   date={1986},
   number={1},
   pages={57--106},
   issn={0012-9593},
   review={\MR{860811}},
}

\bib{PopaCBMS}{book}{
   author={Popa, Sorin},
   title={Classification of subfactors and their endomorphisms},
   series={CBMS Regional Conference Series in Mathematics},
   volume={86},
   publisher={Published for the Conference Board of the Mathematical
   Sciences, Washington, DC; by the American Mathematical Society,
   Providence, RI},
   date={1995},
   pages={x+110},
   isbn={0-8218-0321-2},
   review={\MR{1339767}},
   doi={10.1090/cbms/086},
}

\bib{Ruan1978}{thesis}{
    author={Ruan, Zhong-Jin},
  title={On matricially normed spaces associated with operator algebras},
  year={1987},
  school={University of California, Los Angeles}
}


\end{biblist}
\end{bibdiv}

\end{document}